\date{\today}
\newtheorem{theorem}{Theorem}[section]
\newtheorem{proposition}[theorem]{Proposition}
\newtheorem{corollary}[theorem]{Corollary}
\newtheorem{lemma}[theorem]{Lemma}
\theoremstyle{definition}
\newtheorem{remark}[theorem]{Remark}%[section]
\begin{document}

\title[THE MONOID OF ORDER ISOMORPHISMS BETWEEN PRINCIPAL FILTERS OF $\sigma{\mathbb{N}^\kappa}$...]{THE MONOID OF ORDER ISOMORPHISMS BETWEEN PRINCIPAL FILTERS OF $\sigma{\mathbb{N}^\kappa}$}

\author[Taras~Mokrytskyi]{Taras~Mokrytskyi}
\address{Faculty of Mathematics, National University of Lviv,
Universytetska 1, Lviv, 79000, Ukraine}
\email{tmokrytskyi@gmail.com}

\keywords{Semigroup, inverse semigroup, partial map, permutation group, least group congruence, bicyclic monoid, semidirect product}

\subjclass[2020]{20M18, 20M20, 20M30.}

\begin{abstract}
  Consider the following generalization of the bicyclic monoid. Let $\kappa$ be any infinite cardinal and let $\mathcal{IP\!F}\left(\sigma{\mathbb{N}^\kappa}\right)$ be the semigroup of all order isomorphisms between principal filters of the set $\sigma{\mathbb{N}^\kappa}$ with the product order. We shall study algebraic properties of the semigroup $\mathcal{IP\!F}\left(\sigma{\mathbb{N}^\kappa}\right)$, show that it is bisimple, $E$-unitary, $F$-inverse semigroup, describe Green's relations on $\mathcal{IP\!F}\left(\sigma{\mathbb{N}^\kappa}\right)$, describe the group of units $H\left(\mathbb{I}\right)$ of the semigroup $\mathcal{IP\!F}\left(\sigma{\mathbb{N}^\kappa}\right)$ and describe its maximal subgroups. We prove that
  the semigroup $\mathcal{IP\!F}\left(\sigma{\mathbb{N}^\kappa}\right)$ is isomorphic to the semidirect product $\mathcal{S}_\kappa\ltimes\sigma{\mathbb{B}^\kappa}$ of the semigroup $\sigma{\mathbb{B}^\kappa}$ by the group $\mathcal{S}_\kappa$, show that
  every non-identity congruence $\mathfrak{C}$ on the semigroup $\mathcal{IP\!F}\left(\sigma{\mathbb{N}^\kappa}\right)$ is a group congruence and describe the least group congruence on $\mathcal{IP\!F}\left(\sigma{\mathbb{N}^\kappa}\right)$.
\end{abstract}

\maketitle

\section{\bf Introduction and preliminaries}\label{S:intro}

In this paper, we shall denote
the set of integers by $\mathbb{Z}$,
the set of positive integers by $\mathbb{N}$,
the set of all maps from cardinal $\kappa$ to the set $X$ by $X^\kappa$
and the symmetric group of degree $\kappa$ by $\mathcal{S}_\kappa$, i.e., $\mathcal{S}_\kappa$ is the group of all bijections of the set $\kappa$.
For set $X$, by $id_X$ we denote the identity map $id_X\colon X \to X$, $id_X\colon x\mapsto x$ for any $x\in X$.
For map $f\colon X\to Y$ and for subset $A\subset X$ we denote $\left(A\right)f = \{\left(x\right)f \mid x\in X\}$.

Let $\left(X,\leqslant\right)$ be a partially ordered set (a poset). For an arbitrary $x\in X$ we denote
 \begin{equation*}
{\uparrow}x=\left\{y\in X\colon x\leqslant y\right\} \qquad \hbox{and} \qquad {\downarrow}x=\left\{y\in X\colon y\leqslant x\right\}.
\end{equation*}
The sets ${\uparrow}x$ and ${\downarrow}x$ are called the \emph{principal filter} and the \emph{principal ideal}, respectively, generated by the element $x\in X$. A map $\alpha\colon \left(X,\leqslant\right)\to\left(Y,\eqslantless\right)$ from poset $\left(X,\leqslant\right)$ into a poset $\left(Y,\eqslantless\right)$ is called \emph{monotone} or \emph{order preserving} if $x\leqslant y$ in $\left(X,\leqslant\right)$ implies that $x\alpha\eqslantless y\alpha$ in $\left(Y,\eqslantless\right)$. A monotone map $\alpha\colon \left(X,\leqslant\right)\to\left(Y,\eqslantless\right)$ is said to be \emph{order isomorphism} if it is bijective and its converse $\alpha^{-1}\colon\left(Y,\eqslantless\right)\to\left(X,\leqslant\right)$ is monotone.

An semigroup $S$ is called {\it inverse} if for any element $x\in S$ there exists a unique $x^{-1}\in S$ such that $xx^{-1}x=x$ and $x^{-1}xx^{-1}=x^{-1}$. The element $x^{-1}$ is called the {\it inverse of} $x\in S$. If $S$ is an inverse semigroup, then the function $\operatorname{inv}\colon S\to S$ which assigns to every element $x$ of $S$ its inverse element $x^{-1}$ is called the {\it inversion}.

If $S$ is a semigroup, then we shall denote the subset of all idempotents in $S$ by $E\left(S\right)$. If $S$ is an inverse semigroup, then $E\left(S\right)$ is closed under multiplication. The semigroup operation on $S$ determines the following partial order $\preccurlyeq$
on $E\left(S\right)$: $e\preccurlyeq f$ if and only if $ef=fe=e$. This order is called the {\em natural partial order} on $E\left(S\right)$. A \emph{semilattice} is a commutative semigroup of idempotents.

If $S$ is a semigroup, then we shall denote the Green relations on $S$ by $\mathscr{R}$, $\mathscr{L}$, $\mathscr{J}$, $\mathscr{D}$ and $\mathscr{H}$ (see \cite{Clifford-Preston-1961-1967}). A semigroup $S$ is called \emph{simple} if $S$ does not contain proper two-sided ideals and \emph{bisimple} if $S$ has only one $\mathscr{D}$-class.

Hereafter we shall assume that $\lambda$ is an infinite cardinal.
If $\alpha\colon \lambda\rightharpoonup \lambda$ is a partial map, then we shall denote
the domain and the range of $\alpha$ by $\operatorname{dom}\alpha$ and $\operatorname{ran}\alpha$, respectively.

Let $\mathscr{I}_\lambda$ be the set of all partial one-to-one
transformations of a cardinal $\lambda$
together with the following semigroup operation:
\begin{equation*}
x\left(\alpha\beta\right)=\left(x\alpha\right)\beta \quad \hbox{if} \quad
x\in\operatorname{dom}\left(\alpha\beta\right)=\left\{
y\in\operatorname{dom}\alpha\mid
y\alpha\in\operatorname{dom}\beta\right\},\quad \hbox{for }
\alpha,\beta\in\mathscr{I}_\lambda.
\end{equation*}
The semigroup $\mathscr{I}_\lambda$ is called the \emph{symmetric
inverse semigroup} over the cardinal $\lambda$~(see \cite[Section~1.9]{Clifford-Preston-1961-1967}).
The symmetric inverse semigroup was introduced by
Wagner~\cite{Vagner-1952} and it plays a major role in the theory of
semigroups.

The \emph{bicyclic semigroup} (or the \emph{bicyclic monoid}) ${\mathscr{C}}\left(p,q\right)$ is the semigroup with the identity $1$ generated by elements $p$ and $q$ subject only to the condition $pq=1$.

The bicyclic semigroup plays an important role in the algebraic theory of semigroups and the theory of topological semigroups. For instance, a well-known Andersen's result~\cite{Andersen-1952} states that a ($0$-)simple semigroup with an idempotent is completely ($0$-)simple if and only if it does not contain an isomorphic copy of the bicyclic semigroup.

The bicyclic monoid admits only the discrete semigroup topology. Bertman and  West in \cite{Bertman-West-1976} extended this result for the case of semitopological semigroups. Stable and $\Gamma$-compact topological semigroups do not contain the bicyclic monoid~\cite{Anderson-Hunter-Koch-1965, Hildebrant-Koch-1986}. The problem of an embedding of the bicyclic monoid into compact-like topological semigroups was studied in \cite{Banakh-Dimitrova-Gutik-2009, Banakh-Dimitrova-Gutik-2010, Gutik-Repovs-2007}. The study of various generalizations of the bicyclic monoid, their algebraic and topological properties, like topologizations, shift-continuous topologizations and embedding into compact-like topological semigroups was conducted in several publications, including \cite{Bardyla-Gutik-2016, Bardyla-Gutik-2020, Chuchman-Gutik-2010, Gutik-Lysetska-2023, Gutik-Khylynskyi-2022, Gutik-Krokhmalna-2019, Gutik-Maksymyk-2016, Gutik-Maksymyk-2016-2, Gutik-Mokrytskyi-2020, Gutik-Mykhalenych-2020, Gutik-Popadiuk-2022, Gutik-Savchuk-2018, Gutik-Savchuk-2019, Mokrytskyi-2019, Gutik-Pozdniakova-2022}.

\begin{remark}\label{remark-01.1}
We observe that the bicyclic semigroup is isomorphic to the
semigroup $\mathscr{C}_{\mathbb{N}}\left(\alpha,\beta\right)$ which is
generated by partial transformations $\alpha$ and $\beta$ of the set
of positive integers $\mathbb{N}$, defined as follows:
$\left(n\right)\alpha=n+1$ if $n\geq 1$ and $\left(n\right)\beta=n-1$ if $n> 1$ (see Exercise~IV.1.11$\left(ii\right)$ in \cite{Petrich-1984}).
\end{remark}

Taking into account this remark, we shall consider the following generalization of the bicyclic semigroup. For an arbitrary positive integer $n\geq 2$ by $\left(\mathbb{N}^n,\leqslant\right)$ we denote the $n$-th power of the set of positive integers $\mathbb{N}$ with the product order:
\begin{equation*}
  \left(x_1,\ldots,x_n\right)\leqslant\left(y_1,\ldots,y_n\right) \qquad \hbox{if and only if} \qquad x_i\leq y_i \quad \hbox{for all} \quad i=1,\ldots,n.
\end{equation*}
It is obvious that the set of all order isomorphisms between principal filters of the poset $\left(\mathbb{N}^n,\leqslant\right)$ with the operation of the composition of partial maps forms a semigroup. Denote this semigroup by $\mathcal{IP\!F}(\mathbb{N}^n)$. The structure of the semigroup $\mathcal{IP\!F}(\mathbb{N}^n)$ was introduced and studied in \cite{Gutik-Mokrytskyi-2020}. There was shown that $\mathcal{IP\!F}(\mathbb{N}^n)$ is a bisimple, $E$-unitary, $F$-inverse monoid, described Green's relations on $\mathcal{IP\!F}(\mathbb{N}^n)$ and its maximal subgroups. It was proved that $\mathcal{IP\!F}(\mathbb{N}^n)$ is isomorphic to the semidirect product of the direct $n$-th power of the bicyclic monoid ${\mathscr{C}}^n(p,q)$ by the group of permutation $\mathcal{S}_n$, every non-identity congruence on  $\mathcal{IP\!F}(\mathbb{N}^n)$ is group and was described the least group congruence on $\mathcal{IP\!F}(\mathbb{N}^n)$. It was shown that every shift-continuous topology on $\mathcal{IP\!F}(\mathbb{N}^n)$ is discrete and discussed embedding of the semigroup $\mathcal{IP\!F}(\mathbb{N}^n)$ into compact-like topological semigroups. In \cite{Mokrytskyi-2019} it was proved that a Hausdorff locally compact semitopological semigroup $\mathcal{IP\!F}(\mathbb{N}^n)$ with an adjoined zero is either compact or discrete. In this paper we shall extend this generalization from $\mathbb{N}^n$ to $\sigma{\mathbb{N}^\kappa}$ for any infinite cardinal $\kappa$.

For any infinite cardinal $\kappa$ consider the subset $\sigma{\mathbb{N}^\kappa}$ of $\mathbb{N}^\kappa$ which contains all maps $a$ such that the set $\{ x\in \kappa \mid \left(x\right)a \neq 1 \}$ is finite, i.e.,
\begin{equation*}
  \sigma{\mathbb{N}^\kappa} = \{ a \in \mathbb{N}^\kappa \mid  \{ x\in \kappa \mid \left(x\right)a \neq 1 \} \hbox{ is finite } \}.
\end{equation*}

Similarly define $\sigma{\mathbb{Z}^\kappa}$ as the subset of $\mathbb{Z}^\kappa$ which contains all maps $a$ such that the set $\{ x\in \kappa \mid \left(x\right)a \neq 0 \}$ is finite.

By $\mathbf{1}$ we shall denote the element of the $\mathbb{N}^\kappa$ such that $\left(x\right)\mathbf{1}=1$ for any $x\in\kappa$.

On the set $\mathbb{Z}^\kappa$ consider the product order $\leqslant$:
\begin{equation*}
  a\leqslant b \qquad \hbox{if and only if} \qquad \left(x\right)a\leq \left(x\right)b \quad \hbox{for all} \quad x\in\kappa.
\end{equation*}

Also, consider the pointwise operations $+$, $-$, $\operatorname{max}$ and $\operatorname{min}$ on the set $\mathbb{Z}^\kappa$. For any $a,b \in \mathbb{Z}^\kappa$ define
\begin{equation*}
  \begin{split}
  &\left(x\right)\left(a+b\right) = \left(x\right)a+\left(x\right)b,\\
  &\left(x\right)\left(a-b\right) = \left(x\right)a-\left(x\right)b, \\
  & \left(x\right)\left(\operatorname{max}\{a,b\}\right)=\operatorname{max}\{\left(x\right)a,\left(x\right)b\}, \\
  & \left(x\right)\left(\operatorname{min}\{a,b\}\right)=\operatorname{min}\{\left(x\right)a,\left(x\right)b\}
  \end{split}
\end{equation*}
for any $x\in \kappa$.
 It is obvious that the set $\sigma{\mathbb{Z}^\kappa}$ is closed under these operations. The set $\sigma{\mathbb{N}^\kappa}$ is also closed under the operation $\operatorname{max}$ and $\operatorname{min}$ but not for $+$ and $-$. Moreover
\begin{equation*}
  a+b, a-b \notin \sigma{\mathbb{N}^\kappa} \qquad \hbox{for any} \qquad a, b \in \sigma{\mathbb{N}^\kappa}.
\end{equation*}
But
\begin{equation*}
  a + b - \mathbf{1} \in \sigma{\mathbb{N}^\kappa} \qquad \hbox{for any} \qquad a, b \in \sigma{\mathbb{N}^\kappa},
\end{equation*}
and
\begin{equation*}
  a-b+\mathbf{1}\in \sigma{\mathbb{N}^\kappa} \qquad \hbox{for any} \qquad a\in \sigma{\mathbb{N}^\kappa} \qquad \hbox{and} \qquad b\in {\downarrow}a.
\end{equation*}

Let $\kappa$ by any infinite cardinal. Define the semigroup $\mathcal{IP\!F}\left(\sigma{\mathbb{N}^\kappa}\right)$ as the set of all order isomorphisms between principal filters of the poset $\left(\sigma{\mathbb{N}^\kappa}, \leqslant\right)$ with the operation of the composition of partial maps, i.e.,
\begin{equation*}
\mathcal{IP\!F}\left(\sigma{\mathbb{N}^\kappa}\right)=\left(\{\alpha\colon {\uparrow}a\to {\uparrow}b \mid a,b\in \sigma{\mathbb{N}^\kappa} \hbox{ and } \alpha \text{ is an order isomorphism}\}, \circ\right).
\end{equation*}

Consider the following notation.
For any $\alpha \in \mathcal{IP\!F}\left(\sigma{\mathbb{N}^\kappa}\right)$ by $d_\alpha$ and $r_\alpha$ we denote the elements of $\sigma{\mathbb{N}^\kappa}$ such that $\operatorname{dom}{\alpha}={\uparrow}d_\alpha$ and $\operatorname{ran}{\alpha}={\uparrow}r_\alpha$

Also we define the maps $\lambda_\alpha, \rho_\alpha \in \mathcal{IP\!F}\left(\sigma{\mathbb{N}^\kappa}\right)$ in the following way:
\begin{equation*}
\begin{split}
\operatorname{dom}\rho_\alpha=\operatorname{dom}\alpha={\uparrow}d_\alpha, \quad \operatorname{ran}\rho_\alpha=\sigma{\mathbb{N}^\kappa}, \quad \left(a\right)\rho_\alpha=a-d_\alpha+\mathbf{1}  \quad \hbox{ for } a\in \operatorname{dom}\rho_\alpha; \\
\operatorname{ran}\lambda_\alpha=\operatorname{ran}\alpha={\uparrow}r_\alpha, \quad \operatorname{dom}\lambda_\alpha=\sigma{\mathbb{N}^\kappa}, \quad \left(a\right)\lambda_\alpha=a+r_\alpha-\mathbf{1}  \quad \hbox{ for } a\in \operatorname{dom}\lambda_\alpha.
\end{split}
\end{equation*}
Since $a+r_\alpha-\mathbf{1} \in \sigma{\mathbb{N}^\kappa}$ for any $a\in \operatorname{dom}\lambda_\alpha$ we have that $\lambda_\alpha$ is well-defined.
Similarly, $a-d_\alpha+\mathbf{1} \in \sigma{\mathbb{N}^\kappa}$ for any $a\in \operatorname{dom}\rho_\alpha$, so $\rho_\alpha$ is well-defined too.
We note that the definition of $\lambda_\alpha, \rho_\alpha$ implies that $\lambda_{\lambda_\alpha}=\lambda_\alpha$ and $\rho_{\rho_\alpha} = \rho_\alpha$.

For any infinite cardinal $\kappa$ and for any bijection $g\in \mathcal{S}_\kappa$ define the selfmap $\mathcal{F}_g\colon\mathbb{Z}^\kappa\to\mathbb{Z}^\kappa$ by formula:
  \begin{equation*}
  \left(x\right)\left(a\right)\mathcal{F}_g=\left(\left(x\right)g^{-1}\right)a, \text{ } a\in\mathbb{Z}^\kappa, \text{ } x\in \kappa.
  \end{equation*}

  \section{\bf Algebraic properties of the semigroup $\mathcal{IP\!F}\left(\sigma{\mathbb{N}^\kappa}\right)$}

  \begin{proposition}\label{proposition-2.1}
    For any infinite cardinal $\kappa$ the following statements hold:
    \begin{itemize}
      \item[$\left(i\right)$] $\mathcal{IP\!F}\left(\sigma{\mathbb{N}^\kappa}\right)$ is an inverse semigroup;
      \item[$\left(ii\right)$] the semilattice $E\left(\mathcal{IP\!F}\left(\sigma{\mathbb{N}^\kappa}\right)\right)$ is isomorphic to the semilattice $\left(\sigma{\mathbb{N}^\kappa},\operatorname{max}\right)$ by the mapping $\varepsilon \mapsto d_\varepsilon$;
      \item[$\left(iii\right)$] $\alpha\mathscr{L}\beta$ in $\mathcal{IP\!F}\left(\sigma{\mathbb{N}^\kappa}\right)$ if and only if $\operatorname{dom}\alpha=\operatorname{dom}\beta$;
      \item[$\left(iv\right)$] $\alpha\mathscr{R}\beta$ in $\mathcal{IP\!F}\left(\sigma{\mathbb{N}^\kappa}\right)$ if and only if $\operatorname{ran}\alpha=\operatorname{ran}\beta$;
      \item[$\left(v\right)$] $\alpha\mathscr{H}\beta$ in $\mathcal{IP\!F}\left(\sigma{\mathbb{N}^\kappa}\right)$ if and only if $\operatorname{dom}\alpha=\operatorname{dom}\beta$ and $\operatorname{ran}\alpha=\operatorname{ran}\beta$;
      \item[$\left(vi\right)$] for any idempotents $\varepsilon,\iota\in \mathcal{IP\!F}\left(\sigma{\mathbb{N}^\kappa}\right)$ there exist elements $\alpha,\beta\in
        \mathcal{IP\!F}\left(\sigma{\mathbb{N}^\kappa}\right)$ such that $\alpha\beta=\varepsilon$ and $\beta\alpha=\iota$, hence $\mathcal{IP\!F}\left(\sigma{\mathbb{N}^\kappa}\right)$ is bisimple which implies that it is simple.
    \end{itemize}
    \end{proposition}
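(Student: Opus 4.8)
The plan is to exhibit $\mathcal{IP\!F}\left(\sigma{\mathbb{N}^\kappa}\right)$ as an inverse subsemigroup of the symmetric inverse semigroup $\mathscr{I}_{\sigma{\mathbb{N}^\kappa}}$ over the underlying set, and then to obtain $\left(ii\right)$--$\left(vi\right)$ by specialising standard facts about inverse semigroups. For $\left(i\right)$ I would first check closure under composition: if $\alpha\colon{\uparrow}a\to{\uparrow}b$ and $\beta\colon{\uparrow}c\to{\uparrow}d$ are order isomorphisms between principal filters, then $\operatorname{dom}\left(\alpha\beta\right)$ is the $\alpha$-preimage of ${\uparrow}b\cap{\uparrow}c$; since ${\uparrow}b\cap{\uparrow}c={\uparrow}\operatorname{max}\{b,c\}$ with $\operatorname{max}\{b,c\}\in\sigma{\mathbb{N}^\kappa}$, and since every order isomorphism carries a principal filter of its domain onto a principal filter of its range, this preimage is again a principal filter, and there $\alpha\beta$ is a composition of order isomorphisms onto a principal filter inside ${\uparrow}d$. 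The set is plainly closed under passing to converse maps, i.e. under the inversion of $\mathscr{I}_{\sigma{\mathbb{N}^\kappa}}$; and a subsemigroup of a symmetric inverse semigroup that is closed under this inversion is an inverse semigroup, the inverse being unique because it is already unique in $\mathscr{I}_{\sigma{\mathbb{N}^\kappa}}$. This proves $\left(i\right)$, and identifies $\alpha^{-1}$ with the converse order isomorphism, so $\operatorname{dom}\alpha^{-1}=\operatorname{ran}\alpha$.

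For $\left(ii\right)$ I would use that an idempotent of $\mathscr{I}_{\sigma{\mathbb{N}^\kappa}}$ is exactly the identity map on its domain, so $E\left(\mathcal{IP\!F}\left(\sigma{\mathbb{N}^\kappa}\right)\right)=\{id_{{\uparrow}a}\colon a\in\sigma{\mathbb{N}^\kappa}\}$ and $\varepsilon\mapsto d_\varepsilon$ is a bijection onto $\sigma{\mathbb{N}^\kappa}$; since $id_{{\uparrow}a}\,id_{{\uparrow}b}=id_{{\uparrow}a\,\cap\,{\uparrow}b}=id_{{\uparrow}\operatorname{max}\{a,b\}}$ (again using ${\uparrow}a\cap{\uparrow}b={\uparrow}\operatorname{max}\{a,b\}$ and closure of $\sigma{\mathbb{N}^\kappa}$ under $\operatorname{max}$), this bijection is an isomorphism onto $\left(\sigma{\mathbb{N}^\kappa},\operatorname{max}\right)$, in particular $E\left(\mathcal{IP\!F}\left(\sigma{\mathbb{N}^\kappa}\right)\right)$ is a semilattice, consistently with $\left(i\right)$. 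Parts $\left(iii\right)$--$\left(v\right)$ then come from the known description of Green's relations on an inverse semigroup, namely $\alpha\mathscr{L}\beta\iff\alpha^{-1}\alpha=\beta^{-1}\beta$, $\alpha\mathscr{R}\beta\iff\alpha\alpha^{-1}=\beta\beta^{-1}$ and $\mathscr{H}=\mathscr{L}\cap\mathscr{R}$, together with the direct computation in $\mathscr{I}_{\sigma{\mathbb{N}^\kappa}}$ that the idempotents $\alpha^{-1}\alpha$ and $\alpha\alpha^{-1}$ are the identity maps on $\operatorname{dom}\alpha$ and on $\operatorname{ran}\alpha$, in the order dictated by the postfix convention for composition fixed at the start.

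For $\left(vi\right)$, given idempotents $\varepsilon=id_{{\uparrow}a}$ and $\iota=id_{{\uparrow}b}$, I would take $\alpha\colon{\uparrow}a\to{\uparrow}b$ to be the translation $x\mapsto x-a+b$ and $\beta\colon{\uparrow}b\to{\uparrow}a$ the translation $y\mapsto y-b+a$; using the identities $c+d-\mathbf{1}\in\sigma{\mathbb{N}^\kappa}$ for $c,d\in\sigma{\mathbb{N}^\kappa}$ and $c-d+\mathbf{1}\in\sigma{\mathbb{N}^\kappa}$ for $d\in{\downarrow}c$ recorded in the preliminaries, one checks that $\alpha$ and $\beta$ are well defined, mutually inverse order isomorphisms, whence $\alpha\beta=\alpha\alpha^{-1}=id_{{\uparrow}a}=\varepsilon$ and $\beta\alpha=\alpha^{-1}\alpha=id_{{\uparrow}b}=\iota$. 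Thus any two idempotents are $\mathscr{D}$-equivalent, so $\mathcal{IP\!F}\left(\sigma{\mathbb{N}^\kappa}\right)$ is bisimple; and since $\mathscr{D}\subseteq\mathscr{J}$ it then has a single $\mathscr{J}$-class, hence no proper two-sided ideals, i.e. it is simple.

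The one point that needs genuine care is the bookkeeping forced by the postfix notation for composition, which decides whether $\alpha^{-1}\alpha$ and $\alpha\alpha^{-1}$ are the identities on $\operatorname{dom}\alpha$ or on $\operatorname{ran}\alpha$, and hence the exact form of $\left(iii\right)$, $\left(iv\right)$ and of the products in $\left(vi\right)$; I would therefore pin down that convention once and apply it uniformly. Everything else — closure, that the translations are mutually inverse order isomorphisms, and the identification of the idempotents — reduces to the coordinatewise arithmetic of $\sigma{\mathbb{N}^\kappa}$ already assembled above.
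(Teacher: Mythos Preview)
Your approach is essentially identical to the paper's: both embed $\mathcal{IP\!F}\left(\sigma{\mathbb{N}^\kappa}\right)$ as an inverse subsemigroup of $\mathscr{I}_{\sigma{\mathbb{N}^\kappa}}$, read off $\left(ii\right)$--$\left(v\right)$ from standard facts (the paper simply cites Lawson's Proposition~3.2.11 where you reconstruct the argument), and for $\left(vi\right)$ both use exactly the translation $z\mapsto z-d_\varepsilon+d_\iota$ with $\beta=\alpha^{-1}$, the paper invoking Munn's Lemma~1.1 where you argue directly via $\mathscr{D}\subseteq\mathscr{J}$. Your explicit caution about the postfix bookkeeping for $\alpha^{-1}\alpha$ versus $\alpha\alpha^{-1}$ is well placed and is the only point requiring real care.
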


    \begin{proof}[\textsl{Proof}]
      $\left(i\right)$ The definition of the semigroup $\mathcal{IP\!F}\left(\sigma{\mathbb{N}^\kappa}\right)$ implies that $\mathcal{IP\!F}\left(\sigma{\mathbb{N}^\kappa}\right)$ is an inverse subsemigroup of the symmetric inverse monoid $\mathcal{I}_{\sigma{\mathbb{N}^\kappa}}$ over the set $\sigma{\mathbb{N}^\kappa}$.

      $\left(ii\right)$ implies from statement $\left(i\right)$.

      $\left(iii\right)$--$\left(v\right)$ follow from statement $\left(i\right)$ and Proposition~3.2.11$\left(1\right)$--$\left(3\right)$ of \cite{Lawson-1998}.

      $\left(vi\right)$ Fix arbitrary idempotents $\varepsilon,\iota\in\mathcal{IP\!F}\left(\sigma{\mathbb{N}^\kappa}\right)$. Define a partial map $\alpha\colon \sigma{\mathbb{N}^\kappa}\rightharpoonup \sigma{\mathbb{N}^\kappa}$ in the following way:
    \begin{equation*}
      \operatorname{dom}\alpha=\operatorname{dom}\varepsilon, \qquad \operatorname{ran}\alpha=\operatorname{dom}\iota \qquad \hbox{and}\qquad \left(z\right)\alpha=z-d_\varepsilon+d_\iota, \quad \hbox{for any} \quad z\in\operatorname{dom}\alpha.
    \end{equation*}
    Since $\varepsilon,\iota\in\mathcal{IP\!F}\left(\sigma{\mathbb{N}^\kappa}\right)$, the partial map $\alpha$ is well-defined and $\alpha\in \mathcal{IP\!F}\left(\sigma{\mathbb{N}^\kappa}\right)$. Then $\alpha\alpha^{-1}=\varepsilon$ and $\alpha^{-1}\alpha=\iota$ and hence we put $\beta=\alpha^{-1}$. Lemma 1.1 from \cite{Munn-1966} implies that $\mathcal{IP\!F}\left(\sigma{\mathbb{N}^\kappa}\right)$ is bisimple and hence simple.
    \end{proof}

  For any positive integer $k\geq 2$ and for any $x \in \kappa$, consider the map $k_x\colon \kappa\to \mathbb{N}$ defined by
  \begin{equation*}
  \left(t\right)k_x =
   \begin{cases}
     k, &\text{if~~$t=x$,}\\
     1, &\text{otherwise.}
   \end{cases}
  \end{equation*}

  \begin{lemma}\label{lemma-1.2-3} For any infinite cardinal $\kappa$ and for any bijection $g\in \mathcal{S}_\kappa$, the following statements hold:
    \begin{itemize}
      \item[$\left(i\right)$] The selfmap $\mathcal{F}_g$ is an order automorphism of the poset $\left(\mathbb{Z}^\kappa, \leqslant\right)$, and $\left(\mathcal{F}_g\right)^{-1}=\mathcal{F}_{g^{-1}}$.
      \item[$\left(ii\right)$] $\left(\sigma{\mathbb{N}^\kappa}\right)\mathcal{F}_g$ = $\sigma{\mathbb{N}^\kappa}$.
      \item[$\left(iii\right)$] $\left(\sigma{\mathbb{Z}^\kappa}\right)\mathcal{F}_g$ = $\sigma{\mathbb{Z}^\kappa}$.
      \item[$\left(iv\right)$] $\mathcal{F}_{g h}=\mathcal{F}_g\mathcal{F}_h$ for any $h \in \mathcal{S}_\kappa$.
      \item[$\left(v\right)$] For any $k\in\mathbb{N}$ and for any $x \in \kappa \colon{} \left(k_x\right)\mathcal{F}_g=k_{\left(x\right)g}$.
      \item[$\left(vi\right)$] $\left(\mathbf{1}\right)\mathcal{F}_g=\mathbf{1}$.
      \item[$\left(vii\right)$] For any $h \in \mathcal{S}_\kappa \colon{} g\neq h \implies \mathcal{F}_g\neq \mathcal{F}_h$.

      \item[$\left(viii\right)$] For any $a, b\in \mathbb{Z}^\kappa \colon \left(a+b\right)\mathcal{F}_g=\left(a\right)\mathcal{F}_g + \left(b\right)\mathcal{F}_g$.
      \item[$\left(ix\right)$] For any $a, b\in \mathbb{Z}^\kappa \colon \left(a-b\right)\mathcal{F}_g=\left(a\right)\mathcal{F}_g - \left(b\right)\mathcal{F}_g$.
      \item[$\left(x\right)$] For any $a, b\in \mathbb{Z}^\kappa \colon \left(\operatorname{max}\{a,b\}\right)\mathcal{F}_g = \operatorname{max}\{\left(a\right)\mathcal{F}_g,\left(b\right)\mathcal{F}_g\}$.
      \item[$\left(xi\right)$] For any $a, b\in \mathbb{Z}^\kappa \colon \left(\operatorname{min}\{a,b\}\right)\mathcal{F}_g = \operatorname{min}\{\left(a\right)\mathcal{F}_g,\left(b\right)\mathcal{F}_g\}$.
    \end{itemize}
  \end{lemma}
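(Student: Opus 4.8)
The plan is to deduce all eleven items directly from the defining formula $(x)(a)\mathcal{F}_g=((x)g^{-1})a$ (for $a\in\mathbb{Z}^\kappa$, $x\in\kappa$) together with the fact that the order $\leqslant$ and the operations $+$, $-$, $\operatorname{max}$, $\operatorname{min}$ on $\mathbb{Z}^\kappa$ are all defined coordinatewise. No single step is genuinely difficult; the only place one must be careful is the bookkeeping imposed by writing maps on the right, in particular the identity $(gh)^{-1}=h^{-1}g^{-1}$ in $\mathcal{S}_\kappa$.

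I would prove $(iv)$ first, because the assertions about the inverse of $\mathcal{F}_g$ in $(i)$ follow from it. For $a\in\mathbb{Z}^\kappa$ and $x\in\kappa$,
\begin{equation*}
(x)(a)\mathcal{F}_{gh}=((x)(gh)^{-1})a=(((x)h^{-1})g^{-1})a=((x)h^{-1})(a)\mathcal{F}_g=(x)((a)\mathcal{F}_g)\mathcal{F}_h,
\end{equation*}
so $\mathcal{F}_{gh}=\mathcal{F}_g\mathcal{F}_h$. A one-line computation gives $\mathcal{F}_{id_\kappa}=id_{\mathbb{Z}^\kappa}$, whence $\mathcal{F}_g\mathcal{F}_{g^{-1}}=\mathcal{F}_{g^{-1}}\mathcal{F}_g=id_{\mathbb{Z}^\kappa}$; thus $\mathcal{F}_g$ is a bijection with $(\mathcal{F}_g)^{-1}=\mathcal{F}_{g^{-1}}$. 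Monotonicity of $\mathcal{F}_g$, and of its inverse by the same argument, is immediate: if $a\leqslant b$, i.e. $(y)a\leq(y)b$ for all $y\in\kappa$, then $(x)(a)\mathcal{F}_g=((x)g^{-1})a\leq((x)g^{-1})b=(x)(b)\mathcal{F}_g$ for every $x\in\kappa$. This settles $(i)$ and $(iv)$.

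For $(ii)$ and $(iii)$ I would observe that, for $c\in\{0,1\}$ and $a\in\mathbb{Z}^\kappa$,
\begin{equation*}
\{x\in\kappa\mid(x)(a)\mathcal{F}_g\neq c\}=(\{y\in\kappa\mid(y)a\neq c\})g,
\end{equation*}
which is finite if and only if $\{y\in\kappa\mid(y)a\neq c\}$ is, since $g$ is a bijection; taking $c=1$ (the values staying in $\mathbb{N}$) gives $(\sigma{\mathbb{N}^\kappa})\mathcal{F}_g\subseteq\sigma{\mathbb{N}^\kappa}$, taking $c=0$ gives $(\sigma{\mathbb{Z}^\kappa})\mathcal{F}_g\subseteq\sigma{\mathbb{Z}^\kappa}$, and applying these inclusions to $g^{-1}$ and invoking $(i)$ upgrades both to equalities. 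Items $(v)$ and $(vi)$ are immediate evaluations: $(t)(k_x)\mathcal{F}_g=((t)g^{-1})k_x$ equals $k$ exactly when $(t)g^{-1}=x$, i.e. $t=(x)g$, and equals $1$ otherwise, which is precisely $k_{(x)g}$; and $(x)(\mathbf{1})\mathcal{F}_g=((x)g^{-1})\mathbf{1}=1$ for every $x\in\kappa$. For $(vii)$, if $g\neq h$ choose $x\in\kappa$ with $(x)g\neq(x)h$; then $(2_x)\mathcal{F}_g=2_{(x)g}\neq2_{(x)h}=(2_x)\mathcal{F}_h$ by $(v)$, so $\mathcal{F}_g\neq\mathcal{F}_h$.

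Finally, $(viii)$--$(xi)$ reduce to a single verification. Letting $\odot$ denote any of $+$, $-$, $\operatorname{max}$, $\operatorname{min}$, the coordinatewise definition of $\odot$ yields, for every $x\in\kappa$,
\begin{equation*}
(x)(a\odot b)\mathcal{F}_g=((x)g^{-1})(a\odot b)=(((x)g^{-1})a)\odot(((x)g^{-1})b)=(x)((a)\mathcal{F}_g\odot(b)\mathcal{F}_g),
\end{equation*}
which is exactly the claimed identity. As indicated, I anticipate no real obstacle: the lemma is a catalogue of elementary properties of the ``permute the coordinates by $g$'' map, and the only pitfalls are getting the order of composition right in $(iv)$ and keeping track of $g$ versus $g^{-1}$.
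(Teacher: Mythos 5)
Your proposal is correct, and in substance it is the same elementary coordinatewise verification that the paper gives; the differences are organizational rather than mathematical. You prove $(iv)$ first and obtain the bijectivity of $\mathcal{F}_g$ and the identity $\left(\mathcal{F}_g\right)^{-1}=\mathcal{F}_{g^{-1}}$ from $\mathcal{F}_{gh}=\mathcal{F}_g\mathcal{F}_h$ together with $\mathcal{F}_{id_\kappa}=id_{\mathbb{Z}^\kappa}$, whereas the paper proves injectivity and surjectivity of $\mathcal{F}_g$ directly by exhibiting an explicit preimage; for $(ii)$--$(iii)$ your support identity $\{x\in\kappa\mid (x)(a)\mathcal{F}_g\neq c\}=\left(\{y\in\kappa\mid (y)a\neq c\}\right)g$ handles $\sigma{\mathbb{N}^\kappa}$ and $\sigma{\mathbb{Z}^\kappa}$ uniformly, while the paper carries out the finiteness argument for $\sigma{\mathbb{N}^\kappa}$ and declares $(iii)$ similar; and in $(vii)$ you pick $x$ with $(x)g\neq(x)h$ and test on $2_x$, a slight simplification of the paper's test element $2_{(x)g^{-1}}$ with $(x)g^{-1}\neq(x)h^{-1}$ --- both rest on $(v)$. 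Each route is complete; yours is marginally tidier in its dependency order, the paper's is marginally more self-contained for $(i)$, and nothing of substance separates them.
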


  \begin{proof}[\textsl{Proof}]
    $\left(i\right)$
    Show that $\mathcal{F}_g$ is an order isomorphism.
    Fix distinct $a, b\in\mathbb{Z}^\kappa$. Then there exists $x\in \kappa$ such that $\left(x\right)a \neq \left(x\right)b$. For $y=\left(x\right)g,$ we have that $x=\left(y\right)g^{-1}$, then $\left(\left(y\right)g^{-1}\right)a \neq \left(\left(y\right)g^{-1}\right)b$ implies that $\left(a\right)\mathcal{F}_g\neq\left(b\right)\mathcal{F}_g,$ so $\mathcal{F}_g$ is injective.

    For any $a\in\mathbb{Z}^\kappa$, consider the map $b\colon \left(x\right)b=\left(\left(x\right)g\right)a$ for any $x\in \kappa$, then
    \begin{equation*}
    \left(x\right)\left(b\right)\mathcal{F}_g=\left(\left(x\right)g^{-1}\right)b=\left(\left(\left(x\right)g^{-1}\right)g\right)a=\left(x\right)a
    \end{equation*}
    for any $x\in \kappa$, so $\mathcal{F}_g$ is surjective and moreover its converse $\left(\mathcal{F}_g\right)^{-1}$ is equals to the $\mathcal{F}_{g^{-1}}$.

    Let $a,b\in\mathbb{Z}^\kappa$ and $a\leqslant b$. For any $x\in \kappa$ we have that $\left(\left(x\right)g^{-1}\right)a\leqslant \left(\left(x\right)g^{-1}\right)b$ which implies that $\left(x\right)\left(a\right)\mathcal{F}_g\leqslant\left(x\right)\left(b\right)\mathcal{F}_g$, i.e., $\left(a\right)\mathcal{F}_g\leqslant \left(b\right)\mathcal{F}_g,$ so $\mathcal{F}_g$ is monotone and such is $\mathcal{F}_g^{-1},$ therefore $\mathcal{F}_g$ is an order isomorphism.

    $\left(ii\right)$
  Fix an element $a \in \sigma{\mathbb{N}^\kappa}$. Since $\left(x\right)\left(a\right)\mathcal{F}_g = \left(\left(x\right)g^{-1}\right)a\in\mathbb{N}$ for any $x\in\kappa$ we have that $\left(a\right)\mathcal{F}_g\in \mathbb{N}^\kappa$. Consider the set $A=\{x\in \kappa \mid \left(x\right)a\neq 1\}$ and suppose that $\left(x\right)\left(a\right)\mathcal{F}_g\neq 1$ for some $x\in\kappa$, then $\left(\left(x\right)g^{-1}\right)a\neq 1$ and therefore $\left(x\right)g^{-1}\in A$, so $x\in \left(A\right)g$. Since the set $A$ is finite and $g$ is a bijection, we have that the set $\left(A\right)g$ is finite as well. So $\left(a\right)\mathcal{F}_g \in \sigma{\mathbb{N}^\kappa}$, therefore $\left(\sigma{\mathbb{N}^\kappa}\right)\mathcal{F}_g \subset \sigma{\mathbb{N}^\kappa}$.
  By proved above, we have that $\left(a\right)\mathcal{F}_{g^{-1}}\in \sigma{\mathbb{N}^\kappa}$, then $\left(\left(a\right)\mathcal{F}_{g^{-1}}\right)\mathcal{F}_g = a$ implies that $\sigma{\mathbb{N}^\kappa} \subset \left(\sigma{\mathbb{N}^\kappa}\right)\mathcal{F}_g$.

  $\left(iii\right)$
  The proof is similar to the proof of $\left(ii\right)$.

  $\left(iv\right)$
    For any $h\in \mathcal{S}_\kappa$, $a\in\mathbb{Z}^\kappa$ and $x\in \kappa$ we have that
    \begin{align*}
     \left(x\right)\left(a\right)\mathcal{F}_{g h}&= \left(\left(x\right)\left(gh\right)^{-1}\right)a= \\
       &= \left(\left(x\right)\left(h^{-1}g^{-1}\right)\right)a=\\
       &= \left(\left(\left(x\right)h^{-1}\right)g^{-1}\right)a=\\
       &= \left(\left(x\right)h^{-1}\right)\left(a\right)\mathcal{F}_{g}=\\
       &= \left(x\right)\left(\left(a\right)\mathcal{F}_g\right)\mathcal{F}_h =\\
       &= \left(x\right)\left(a\right)\left(\mathcal{F}_g\mathcal{F}_h\right).
    \end{align*}

  $\left(v\right)$
    Let $k\in \mathbb{N}$ and $x\in \kappa$. Then for any $t\in\kappa$ we have that
    \begin{align*}
      \left(t\right)\left(k_x\right)\mathcal{F}_g&= \left(\left(t\right)g^{-1}\right)k_x = \\
       &=\begin{cases}
         k, &\text{if $\left(t\right)g^{-1}=x$}\\
         1, &\text{otherwise}
       \end{cases}=\\
       &=\begin{cases}
        k, &\text{if $t=\left(x\right)g$}\\
        1, &\text{otherwise}
       \end{cases}=\\
       &=\left(t\right)k_{\left(x\right)g}.
    \end{align*}

  $\left(vi\right)$ For any $t \in \kappa $ we have that $\left(t\right)\left(\mathbf{1}\right)\mathcal{F}_g=\left(\left(t\right)g^{-1}\right)\mathbf{1}=1$.

  $\left(vii\right)$ Let $h\in\mathcal{S}_\kappa$ and $g\neq h$. Then there exists $x\in \kappa$ such that $\left(x\right)g^{-1}\neq \left(x\right)h^{-1}$. Consider the image of $2_{\left(x\right)g^{-1}}$ under the maps $\mathcal{F}_g$ and $\mathcal{F}_h$. Statement $\left(v\right)$ and the inequality $\left(x\right)g^{-1}\neq \left(x\right)h^{-1}$ imply that:
    \begin{equation*}
      \left(2_{\left(x\right)g^{-1}}\right)\mathcal{F}_g = 2_x \neq 2_{\left(\left(x\right)g^{-1}\right)h} = \left(2_{\left(x\right)g^{-1}}\right)\mathcal{F}_h.
    \end{equation*}

  $\left(viii\right)$ For any $a, b\in \mathbb{Z}^\kappa$ and for any $x\in \kappa$ we have that
  \begin{align*}
    \left(x\right)\left(a+b\right)\mathcal{F}_g& =\left(\left(x\right)g^{-1}\right)\left(a+b\right)= \\
     &= \left(\left(x\right)g^{-1}\right)a+\left(\left(x\right)g^{-1}\right)b= \\
     &= \left(x\right)\left(a\right)\mathcal{F}_g+\left(x\right)\left(b\right)\mathcal{F}_g.
  \end{align*}

    Proof of statements $\left(ix\right)$ and $\left(xi\right)$ are similar to the proof of $\left(viii\right)$.
  \end{proof}

  For any infinite cardinal $\kappa$ and for any bijection $g\in \mathcal{S}_\kappa$ define the map $\mathcal{F}^\circ_g\colon\sigma{\mathbb{N}^\kappa}\to\sigma{\mathbb{N}^\kappa}$ as the restriction of the map $\mathcal{F}_g$ to the set $\sigma{\mathbb{N}^\kappa}$. By statement $\left(ii\right)$ of Lemma~\ref{lemma-1.2-3}, the map $\mathcal{F}^\circ_g$ is well-defined and $\mathcal{F}^\circ_g$ is a bijection. This and statement $\left(i\right)$ of Lemma~\ref{lemma-1.2-3} imply that the map $\mathcal{F}^\circ_g$ is an order isomorphism of the poset $\left(\sigma{\mathbb{N}^\kappa}, \leqslant\right)$. Similarly, define the map $\mathcal{F}^\diamond_g\colon\sigma{\mathbb{Z}^\kappa}\to\sigma{\mathbb{Z}^\kappa}$ as the restriction of the map $\mathcal{F}_g$ to the set $\sigma{\mathbb{Z}^\kappa}$. And similarly, statement $\left(iii\right)$ of Lemma~\ref{lemma-1.2-3} implies that the map $\mathcal{F}^\diamond_g$ is well-defined and $\mathcal{F}^\diamond_g$ is a bijection. 
  
  The proof to the next lemma is similar to the proof of Lemma \ref{lemma-1.2-3}.

  \begin{lemma}\label{lemma-1.2-3-next} For any infinite cardinal $\kappa$ and for any bijection $g\in \mathcal{S}_\kappa$
  statements $\left(iv\right)-\left(xi\right)$ of Lemma \ref{lemma-1.2-3} also hold for $\mathcal{F}^\circ_g$ and $\mathcal{F}^\diamond_g$.
  \end{lemma}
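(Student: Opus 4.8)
The plan is to exploit the fact that $\mathcal{F}^\circ_g$ and $\mathcal{F}^\diamond_g$ are nothing but the restrictions of $\mathcal{F}_g$ to the subsets $\sigma{\mathbb{N}^\kappa}$ and $\sigma{\mathbb{Z}^\kappa}$, which by Lemma~\ref{lemma-1.2-3}$(ii)$ and $(iii)$ are invariant under every $\mathcal{F}_h$ (and under $\mathcal{F}_{h^{-1}}=(\mathcal{F}_h)^{-1}$). Consequently every statement among $(iv)$--$(xi)$ that was proved by a pointwise computation for $\mathcal{F}_g$ descends to the restriction as soon as one checks that all the elements entering that statement already lie in the relevant subset. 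So the proof is essentially a verification of membership, with no new idea beyond Lemma~\ref{lemma-1.2-3}; this is exactly why the text announces it as ``similar''.

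Concretely, I would argue item by item. For $(iv)$: since $\sigma{\mathbb{N}^\kappa}$ (resp. $\sigma{\mathbb{Z}^\kappa}$) is $\mathcal{F}_h$-invariant for every $h\in\mathcal{S}_\kappa$, the composition of the restrictions $\mathcal{F}^\circ_g\mathcal{F}^\circ_h$ lands, after the first factor, back inside $\sigma{\mathbb{N}^\kappa}$, so it equals the restriction of $\mathcal{F}_g\mathcal{F}_h=\mathcal{F}_{gh}$, i.e. $\mathcal{F}^\circ_{gh}$; the same reasoning gives $\mathcal{F}^\diamond_{gh}=\mathcal{F}^\diamond_g\mathcal{F}^\diamond_h$. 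For $(v)$ and $(vi)$: the elements $k_x$ and $\mathbf{1}$ belong to $\sigma{\mathbb{N}^\kappa}\subseteq\sigma{\mathbb{Z}^\kappa}$, hence lie in the domains of $\mathcal{F}^\circ_g$ and $\mathcal{F}^\diamond_g$, so the identities of Lemma~\ref{lemma-1.2-3}$(v)$, $(vi)$ apply verbatim. For $(vii)$: given $g\neq h$, the original proof uses the witness $2_{(x)g^{-1}}$, which lies in $\sigma{\mathbb{N}^\kappa}$; reusing that computation shows $\mathcal{F}^\circ_g$ and $\mathcal{F}^\circ_h$ (and likewise $\mathcal{F}^\diamond_g$, $\mathcal{F}^\diamond_h$) differ on this element. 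For $(viii)$--$(xi)$: the set $\sigma{\mathbb{Z}^\kappa}$ is closed under $+$, $-$, $\operatorname{max}$, $\operatorname{min}$, so all four identities restrict literally to $\mathcal{F}^\diamond_g$; the set $\sigma{\mathbb{N}^\kappa}$ is closed under $\operatorname{max}$ and $\operatorname{min}$, so $(x)$ and $(xi)$ restrict to $\mathcal{F}^\circ_g$, while $(viii)$ and $(ix)$ are to be read for $\mathcal{F}^\circ_g$ only in the combined forms that keep all arguments inside $\sigma{\mathbb{N}^\kappa}$, namely $(a+b-\mathbf{1})\mathcal{F}^\circ_g=(a)\mathcal{F}^\circ_g+(b)\mathcal{F}^\circ_g-\mathbf{1}$ and, for $b\in{\downarrow}a$, $(a-b+\mathbf{1})\mathcal{F}^\circ_g=(a)\mathcal{F}^\circ_g-(b)\mathcal{F}^\circ_g+\mathbf{1}$, both obtained by combining Lemma~\ref{lemma-1.2-3}$(viii),(ix),(vi)$ and then restricting.

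The only point that requires any care — and hence the ``main obstacle'', such as it is — is the non-closure of $\sigma{\mathbb{N}^\kappa}$ under $+$ and $-$: one must be slightly careful about the precise meaning of $(viii)$ and $(ix)$ for $\mathcal{F}^\circ_g$, whereas for $\mathcal{F}^\diamond_g$ everything goes through literally. Beyond this bookkeeping there is no further content, and the whole lemma follows from Lemma~\ref{lemma-1.2-3} together with its statements $(i)$--$(iii)$.
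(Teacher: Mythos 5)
Your proposal is correct and matches the paper's intent exactly: the paper offers no written argument beyond declaring the proof ``similar to the proof of Lemma~\ref{lemma-1.2-3}'', and what is meant is precisely your restriction argument, namely that the pointwise identities of Lemma~\ref{lemma-1.2-3}$\left(iv\right)$--$\left(xi\right)$ descend to $\mathcal{F}^\circ_g$ and $\mathcal{F}^\diamond_g$ because $\sigma{\mathbb{N}^\kappa}$ and $\sigma{\mathbb{Z}^\kappa}$ are invariant under every $\mathcal{F}_h$ by statements $\left(ii\right)$ and $\left(iii\right)$. Your extra care with $\left(viii\right)$ and $\left(ix\right)$ for $\mathcal{F}^\circ_g$ (reading them in the combined forms $a+b-\mathbf{1}$ and $a-b+\mathbf{1}$ that stay inside $\sigma{\mathbb{N}^\kappa}$) is a sensible clarification of a point the paper leaves implicit, and is consistent with how these identities are actually used later, e.g.\ in the proof of Lemma~\ref{lemma-1.8}.
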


We shall denote by $\mathbb{I}$  the identity map of $\sigma{\mathbb{N}^\kappa}$. It is obvious that $\mathbb{I}$ is the unit element of the semigroup $\mathcal{IP\!F}\left(\sigma{\mathbb{N}^\kappa}\right)$. Also by $H\left(\mathbb{I}\right)$ we shall denote the group of units of $\mathcal{IP\!F}\left(\sigma{\mathbb{N}^\kappa}\right)$. It is clear that $\alpha\in \mathcal{IP\!F}\left(\sigma{\mathbb{N}^\kappa}\right)$ is an element of $H\left(\mathbb{I}\right)$ if and only if it is an order isomorphism of the poset $\left(\sigma{\mathbb{N}^\kappa},\leqslant\right)$.

  \begin{lemma}\label{lemma-1.1}
    Let $\kappa$ be any infinite cardinal and $\alpha \in H\left(\mathbb{I}\right)$. Then $\left(\mathbf{1}\right)\alpha=\mathbf{1}$ and for any $x\in \kappa$ there exists $y\in \kappa$ such that $\left(k_x\right)\alpha = k_y$ for any positive integer $k \geq 2$.
  \end{lemma}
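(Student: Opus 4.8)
The plan is to use only that $\alpha$ is an order automorphism of $\left(\sigma{\mathbb{N}^\kappa},\leqslant\right)$ (as characterised just before the lemma) and to recover, purely from the order relation, the ``coordinate lines''
\[
L_x:=\left\{\mathbf{1}\right\}\cup\left\{k_x\mid k\geq 2\right\}=\left\{a\in\sigma{\mathbb{N}^\kappa}\mid \left(t\right)a=1\ \text{for all}\ t\neq x\right\},\qquad x\in\kappa,
\]
so that $\alpha$ is forced to permute them. First I would note that $\mathbf{1}$ is the least element of $\left(\sigma{\mathbb{N}^\kappa},\leqslant\right)$, and since $\alpha$ is an order isomorphism of this poset it preserves the least element, whence $\left(\mathbf{1}\right)\alpha=\mathbf{1}$; consequently $\left({\downarrow}a\right)\alpha={\downarrow}\!\left(\left(a\right)\alpha\right)$ for every $a\in\sigma{\mathbb{N}^\kappa}$.

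Next I would establish the order-theoretic description
\[
U:=\bigcup_{x\in\kappa}L_x=\left\{a\in\sigma{\mathbb{N}^\kappa}\mid {\downarrow}a\ \text{is a chain}\right\}.
\]
For the inclusion ``$\subseteq$'' one checks directly that ${\downarrow}k_x=\left\{\mathbf{1},2_x,3_x,\ldots,k_x\right\}$ is a chain; for ``$\supseteq$'', if $a$ satisfies $\left(x_1\right)a\geq 2$ and $\left(x_2\right)a\geq 2$ for distinct $x_1,x_2\in\kappa$, then $2_{x_1},2_{x_2}\in{\downarrow}a$ are incomparable, so ${\downarrow}a$ is not a chain. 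Since the right-hand side is defined solely in terms of $\leqslant$, it follows that $\left(U\right)\alpha=U$.

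Then I would identify the maximal chains of the sub-poset $\left(U,\leqslant\right)$ as precisely the sets $L_x$. Indeed, $L_{x_1}\cap L_{x_2}=\left\{\mathbf{1}\right\}$ for $x_1\neq x_2$, and any element of $L_{x_1}\setminus\left\{\mathbf{1}\right\}$ is incomparable with any element of $L_{x_2}\setminus\left\{\mathbf{1}\right\}$; hence every chain contained in $U$ lies inside a single $L_x$. Moreover each $L_x$ is already a maximal chain of the whole poset $\left(\sigma{\mathbb{N}^\kappa},\leqslant\right)$: if $a$ is comparable with every element of $L_x$, then $a\not\geqslant k_x$ for some $k$ (otherwise $\left(x\right)a$ would be unbounded), so $a\leqslant k_x$ and $a\in{\downarrow}k_x\subseteq L_x$. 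As $\alpha$ restricts to an automorphism of $\left(U,\leqslant\right)$, for each $x\in\kappa$ there is $y\in\kappa$ with $\left(L_x\right)\alpha=L_y$. Finally, $\alpha|_{L_x}\colon L_x\to L_y$ is an order isomorphism between two chains each order-isomorphic to $\left(\mathbb{N},\leq\right)$, carrying the bottom element $\mathbf{1}$ to the bottom element $\mathbf{1}$; such an isomorphism is unique and sends the immediate successor to the immediate successor, so by induction it carries the $k$-th element $k_x$ of $L_x$ to the $k$-th element $k_y$ of $L_y$, i.e. $\left(k_x\right)\alpha=k_y$ for all $k\geq 2$.

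The main obstacle is the middle step: choosing the right order-invariant condition isolating the lines $L_x$. Naive candidates fail — for instance ``maximal chain through $\mathbf{1}$'' does not work, since a ``staircase'' such as $\mathbf{1}<2_{x_1}<\operatorname{max}\{2_{x_1},2_{x_2}\}<\operatorname{max}\{2_{x_1},2_{x_2},2_{x_3}\}<\cdots$ is also a maximal chain containing $\mathbf{1}$ but is not of the form $L_x$. The condition ``${\downarrow}a$ is a chain'' (equivalently, $a$ has at most one coordinate different from $1$) is what makes the argument go through.
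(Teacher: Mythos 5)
Your argument is correct, and it reaches the lemma by a genuinely different route than the paper. You characterise the union $U$ of the coordinate lines $L_x$ by the order-invariant property ``${\downarrow}a$ is a chain'', observe that the $L_x$ are exactly the maximal chains of $\left(U,\leqslant\right)$ (pairwise meeting only in $\mathbf{1}$, with off-line elements incomparable), conclude that $\alpha$ permutes the lines, and finish by the rigidity of an order isomorphism between chains of type $\left(\mathbb{N},\leq\right)$; the remark that each $L_x$ is even a maximal chain of the whole poset is true but not needed. The paper instead proceeds by induction on $k$: it first gets $\left(\mathbf{1}\right)\alpha=\mathbf{1}$ exactly as you do, then pins down $\left(2_x\right)\alpha=2_y$ by comparing with the least element, and for the inductive step shows by contradiction that $\left(\left(k+1\right)_x\right)\alpha$ must dominate some $\left(k+1\right)_z$ --- otherwise it would have two coordinates strictly between $1$ and $k+1$, giving two distinct elements $2_{z_1},2_{z_2}$ whose preimages $2_{z_1'},2_{z_2'}$ both lie below $\left(k+1\right)_x$, forcing $z_1=z_2$ --- and then deduces $\left(\left(k+1\right)_x\right)\alpha=\left(k+1\right)_z$ and $z=y$ via $2_x<\left(k+1\right)_x$. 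Your version buys a more structural, induction-free explanation (the lines are order-theoretically definable, hence permuted), at the cost of introducing the auxiliary poset $U$ and its maximal chains; the paper's element-chasing induction is more elementary and stays entirely inside direct manipulations with $\alpha$ and $\alpha^{-1}$. Both hinge on the same two facts: $\mathbf{1}$ is the least element, and the atoms $2_x$ are permuted by $\alpha$.
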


  \begin{proof}[\textsl{Proof}]
  Consider $\left(\mathbf{1}\right)\alpha$. Statement $\mathbf{1} \leqslant \left(\mathbf{1}\right)\alpha$ implies that $\left(\mathbf{1}\right)\alpha^{-1} \leqslant \left(\left(\mathbf{1}\right)\alpha\right)\alpha^{-1} = \mathbf{1}$, so $\left(\mathbf{1}\right)\alpha = \mathbf{1}$.

  Now, consider any $x\in \kappa$ and consider $\left(2_x\right)\alpha$. Since $\mathbf{1} = \left(\mathbf{1}\right)\alpha \neq \left(2_x\right)\alpha$, there exists $y\in \kappa$ such that $2_y \leqslant \left(2_x\right)\alpha,$ and the inequality $\left(2_y\right)\alpha^{-1} \leqslant 2_x$ implies that $\left(2_x\right)\alpha = 2_y.$

  Let $k\geq 2$ be a positive integer, suppose that for any positive integer $n\leq k$ the statement of the lemma holds.

  For any $x \in \kappa$ consider the image $\left(\left(k+1\right)_x\right)\alpha$. There exists $z\in \kappa$ such that $\left(k+1\right)_z \leqslant \left(\left(k+1\right)_x\right)\alpha$. Suppose the contrary that $\left(k+1\right)_z \nleqslant \left(\left(k+1\right)_x\right)\alpha$ for any $z\in \kappa$. Since
  \begin{equation*}
    \left(\left(k+1\right)_x\right)\alpha \notin\{ \mathbf{1}, 2_z, 3_z, \dots, k_z \mid z\in \kappa \},
  \end{equation*}
  there exist two distinct elements $z_1, z_2\in \kappa$ such that
  \begin{equation*}
  1 < \left(z_1\right)\left(\left(k+1\right)_x\right)\alpha < k+1 \quad \hbox{and} \quad 1 < \left(z_2\right)\left(\left(k+1\right)_x\right)\alpha < k+1.
  \end{equation*}
  Hence we have that
  \begin{equation*}
  2_{z_1}\leqslant\left(\left(k+1\right)_x\right)\alpha \quad \hbox{and} \quad 2_{z_2}\leqslant\left(\left(k+1\right)_x\right)\alpha,
  \end{equation*}
  and then
  \begin{equation*}
  \left(2_{z_1}\right)\alpha^{-1}\leqslant\left(k+1\right)_x \quad \hbox{and} \quad  \left(2_{z_2}\right)\alpha^{-1}\leqslant\left(k+1\right)_x.
  \end{equation*}
  Since $\left(2_{z_1}\right)\alpha^{-1} = 2_{z_1^\prime}$ and $\left(2_{z_2}\right)\alpha^{-1} = 2_{z_2^\prime}$ for some $z_1^\prime, z_2^\prime$ we have that $z_1^\prime = z_2^\prime$. Then $2_{z_1}=2_{z_2}$ and hence $z_1=z_2$, which contradicts $z_1 \neq z_2$. Thus, $\left(\left(k+1\right)_z\right)\alpha^{-1} \leqslant \left(k+1\right)_x.$
  Since $\left(\left(k+1\right)_z\right)\alpha^{-1}\notin\{ \mathbf{1}, 2_x, 3_x, \dots, k_x \},$ we have that $\left(\left(k+1\right)_z\right)\alpha^{-1} = \left(k+1\right)_x$, and hence $\left(\left(k+1\right)_x\right)\alpha = \left(k+1\right)_z$. We shall prove that $x=y$. The relation $2_x < \left(k+1\right)_x$ implies that $\left(2_x\right)\alpha < \left(\left(k+1\right)_x\right)\alpha$. Since $\left(2_x\right)\alpha = 2_y$ and $\left(\left(k+1\right)_x\right)\alpha = \left(k+1\right)_z$ we have that $2_y < \left(k+1\right)_z$, so $z=y$.
  \end{proof}

  For any $x\in \kappa$, consider the map $\pi_x \colon \sigma{\mathbb{N}^\kappa}\to \sigma{\mathbb{N}^\kappa}$ defined by the formula:
  \begin{equation*}
  \left(t\right)\left(a\right)\pi_x =
   \begin{cases}
     \left(t\right)a, &\text{if $t=x$};\\
     1, &\text{otherwise,}
   \end{cases}
  \end{equation*}
for any $a\in\sigma{\mathbb{N}^\kappa}$ and $t\in \kappa.$

  \begin{lemma}\label{lemma-1.2}
    Let $\kappa$ be any infinite cardinal and $\alpha \in H\left(\mathbb{I}\right)$ such that the equality $\left(2_x\right)\alpha = 2_x$ holds for any $x\in \kappa$. Then $\alpha$ is the identity map.
  \end{lemma}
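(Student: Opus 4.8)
The plan is to first determine how $\alpha$ acts on the ``axis'' elements $k_x$, and then to exploit the fact that an order automorphism of a lattice must preserve finite joins in order to extend this to all of $\sigma{\mathbb{N}^\kappa}$.

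\emph{Step 1 (behaviour on axis elements).} By Lemma~\ref{lemma-1.1}, for each $x\in\kappa$ there is $y\in\kappa$ with $\left(k_x\right)\alpha = k_y$ for every integer $k\geq 2$. Taking $k=2$ and invoking the hypothesis $\left(2_x\right)\alpha = 2_x$ gives $2_y = 2_x$, hence $y=x$. Thus $\left(k_x\right)\alpha = k_x$ for all $x\in\kappa$ and all integers $k\geq 2$; together with $\left(\mathbf{1}\right)\alpha=\mathbf{1}$ (also from Lemma~\ref{lemma-1.1}), $\alpha$ fixes every element of $\sigma{\mathbb{N}^\kappa}$ whose support $\{x\in\kappa\mid\left(x\right)a\neq 1\}$ has at most one point.

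\emph{Step 2 (order isomorphisms preserve finite joins).} The poset $\left(\sigma{\mathbb{N}^\kappa},\leqslant\right)$ is closed under the pointwise operation $\operatorname{max}$, and for a finite family the pointwise maximum is exactly the least upper bound in this order. Since $\alpha\in H\left(\mathbb{I}\right)$ is an order automorphism of this poset, it carries least upper bounds to least upper bounds, so $\left(\operatorname{max}\{a_1,\dots,a_n\}\right)\alpha = \operatorname{max}\{\left(a_1\right)\alpha,\dots,\left(a_n\right)\alpha\}$ for any finite list $a_1,\dots,a_n\in\sigma{\mathbb{N}^\kappa}$.

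\emph{Step 3 (conclusion).} Fix $a\in\sigma{\mathbb{N}^\kappa}$ and let $S=\{x\in\kappa\mid\left(x\right)a\neq 1\}$, which is finite. A direct pointwise check shows $a = \operatorname{max}\{\,\left(\left(x\right)a\right)_x \mid x\in S\,\}$, with the empty maximum understood to be $\mathbf{1}$ (which handles $a=\mathbf{1}$). For each $x\in S$ the value $\left(x\right)a$ is an integer $\geq 2$, so $\left(\left(x\right)a\right)_x$ is one of the elements fixed by $\alpha$ in Step~1. Applying Step~2,
\begin{equation*}
\left(a\right)\alpha = \operatorname{max}\{\,\left(\left(\left(x\right)a\right)_x\right)\alpha \mid x\in S\,\} = \operatorname{max}\{\,\left(\left(x\right)a\right)_x \mid x\in S\,\} = a,
\end{equation*}
so $\alpha=\mathbb{I}$. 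There is no real obstacle here; the only points needing a moment's care are recording that finite joins in $\left(\sigma{\mathbb{N}^\kappa},\leqslant\right)$ are computed pointwise and are genuine least upper bounds (so an order isomorphism must preserve them), and that every element is such a finite join of axis elements — both are routine pointwise verifications.
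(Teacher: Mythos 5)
Your proposal is correct and takes essentially the same route as the paper: both first use Lemma~\ref{lemma-1.1} together with the hypothesis to conclude that $\alpha$ fixes $\mathbf{1}$ and every axis element $k_x$, and then recover an arbitrary $a\in\sigma{\mathbb{N}^\kappa}$ from the axis elements beneath it. The paper finishes by noting $\left(a\right)\pi_x\leqslant\left(a\right)\alpha$ for all $x$ (hence $a\leqslant\left(a\right)\alpha$) and then applying the same argument to $\alpha^{-1}$, which is precisely the content packaged in your observation that an order isomorphism preserves the finite pointwise joins.
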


  \begin{proof}[\textsl{Proof}]
  Let $a\in \sigma{\mathbb{N}^\kappa}$.
  Since the inequality $\left(a\right)\pi_x \leqslant a$ holds for any $x\in\kappa$ and $\alpha$ is an order isomorphism, it follows that $\left(\left(a\right)\pi_x\right)\alpha \leqslant \left(a\right)\alpha$.
  By Lemma~\ref{lemma-1.1} and by the lemma assumption we have that $\left(\left(a\right)\pi_x\right)\alpha=\left(a\right)\pi_x$, so $\left(a\right)\pi_x \leqslant \left(a\right)\alpha$ for any $x\in\kappa$ and therefore $a \leqslant \left(a\right)\alpha$.

  So, we have that $a \leqslant \left(a\right)\alpha$ for any $a \in \sigma{\mathbb{N}^\kappa}$ and for any $\alpha$ that satisfies the lemma assumption. Applying this result to the element $\left(a\right)\alpha$ and the map $\alpha^{-1}$ we have that $\left(a\right)\alpha \leqslant \left(\left(a\right)\alpha\right)\alpha^{-1} = a$.

  The inequalities $a \leqslant \left(a\right)\alpha$ and $\left(a\right)\alpha \leqslant a$ imply that $\left(a\right)\alpha=a$.
  \end{proof}

  \begin{theorem}\label{theorem-1.3}
  For any infinite cardinal $\kappa$, the group of units $H\left(\mathbb{I}\right)$ of the semigroup $\mathcal{IP\!F}\left(\sigma{\mathbb{N}^\kappa}\right)$ is isomorphic to the group $\mathcal{S}_\kappa$ of all bijections of the cardinal $\kappa$.
  Moreover $\alpha\in H\left(\mathbb{I}\right)$ if and only if $\alpha=\mathcal{F}^\circ_g$ for some $g\in\mathcal{S}_\kappa$.
  \end{theorem}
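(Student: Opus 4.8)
The plan is to build the isomorphism explicitly as $g\mapsto\mathcal{F}^\circ_g$, using the three preparatory lemmas together with the remark (recorded just before the statement) that $\mathcal{F}^\circ_g\in H\left(\mathbb{I}\right)$ for every $g\in\mathcal{S}_\kappa$. First I would fix $\alpha\in H\left(\mathbb{I}\right)$ and extract from Lemma~\ref{lemma-1.1} a map $g_\alpha\colon\kappa\to\kappa$: for $x\in\kappa$ set $\left(x\right)g_\alpha=y$, where $y$ is the element of $\kappa$ with $\left(k_x\right)\alpha=k_y$ for all $k\geq 2$; this $y$ is unique because $k_y$ determines $y$ when $k\geq 2$, so $g_\alpha$ is well-defined. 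Applying the same construction to $\alpha^{-1}\in H\left(\mathbb{I}\right)$ gives a map $g_{\alpha^{-1}}$, and from $\alpha\alpha^{-1}=\alpha^{-1}\alpha=\mathbb{I}$ together with the identity $\left(k_x\right)\alpha=k_{\left(x\right)g_\alpha}$ (and likewise for $\alpha^{-1}$) one reads off $g_\alpha g_{\alpha^{-1}}=g_{\alpha^{-1}}g_\alpha=id_\kappa$, whence $g_\alpha\in\mathcal{S}_\kappa$.

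Next I would show $\alpha=\mathcal{F}^\circ_{g_\alpha}$. Put $\beta=\alpha\cdot\left(\mathcal{F}^\circ_{g_\alpha}\right)^{-1}$; by Lemma~\ref{lemma-1.2-3}$\left(i\right)$,$\left(ii\right)$ we have $\left(\mathcal{F}^\circ_{g_\alpha}\right)^{-1}=\mathcal{F}^\circ_{g_\alpha^{-1}}$, and since $H\left(\mathbb{I}\right)$ is a group, $\beta\in H\left(\mathbb{I}\right)$. Using the formula $\left(k_x\right)\mathcal{F}^\circ_g=k_{\left(x\right)g}$ from Lemma~\ref{lemma-1.2-3}$\left(v\right)$ and Lemma~\ref{lemma-1.2-3-next}, we compute $\left(2_x\right)\beta=\left(2_{\left(x\right)g_\alpha}\right)\mathcal{F}^\circ_{g_\alpha^{-1}}=2_{\left(\left(x\right)g_\alpha\right)g_\alpha^{-1}}=2_x$ for every $x\in\kappa$. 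Thus $\beta$ satisfies the hypothesis of Lemma~\ref{lemma-1.2}, so $\beta=\mathbb{I}$, i.e., $\alpha=\mathcal{F}^\circ_{g_\alpha}$. Combined with $\mathcal{F}^\circ_g\in H\left(\mathbb{I}\right)$ for all $g$, this proves the "moreover" clause: $\alpha\in H\left(\mathbb{I}\right)$ if and only if $\alpha=\mathcal{F}^\circ_g$ for some $g\in\mathcal{S}_\kappa$.

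Finally I would assemble the isomorphism. Define $\Phi\colon\mathcal{S}_\kappa\to H\left(\mathbb{I}\right)$ by $\left(g\right)\Phi=\mathcal{F}^\circ_g$. By Lemma~\ref{lemma-1.2-3}$\left(iv\right)$ and Lemma~\ref{lemma-1.2-3-next}, $\mathcal{F}^\circ_{gh}=\mathcal{F}^\circ_g\cdot\mathcal{F}^\circ_h$, so $\Phi$ is a homomorphism (note that with the right-action convention $\left(a\right)\mathcal{F}_g=\left(\left(x\right)g^{-1}\right)a$ and the product $gh$ meaning "first $g$, then $h$" this comes out a homomorphism, not an anti-homomorphism, exactly as in the proof of Lemma~\ref{lemma-1.2-3}$\left(iv\right)$); by Lemma~\ref{lemma-1.2-3}$\left(vii\right)$ and Lemma~\ref{lemma-1.2-3-next}, $\Phi$ is injective; and the previous paragraph gives surjectivity. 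Hence $\Phi$ is an isomorphism of $\mathcal{S}_\kappa$ onto $H\left(\mathbb{I}\right)$.

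The only points that need a little care, rather than a direct appeal to the lemmas, are the bookkeeping about the direction of composition in the homomorphism claim, and checking that the witnesses used throughout (the elements $k_x$, which lie in $\sigma{\mathbb{N}^\kappa}$) genuinely let Lemma~\ref{lemma-1.2-3-next} be applied to $\mathcal{F}^\circ_g$ in place of $\mathcal{F}_g$; the bijectivity of $g_\alpha$ and the identification $\alpha=\mathcal{F}^\circ_{g_\alpha}$ are then immediate consequences of Lemmas~\ref{lemma-1.1} and~\ref{lemma-1.2}.
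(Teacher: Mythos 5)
Your proposal is correct and follows essentially the same route as the paper: define $g\mapsto\mathcal{F}^\circ_g$, get the homomorphism and injectivity from Lemma~\ref{lemma-1.2-3}$\left(iv\right)$,$\left(vii\right)$, and for surjectivity extract $g_\alpha$ from Lemma~\ref{lemma-1.1} and apply Lemma~\ref{lemma-1.2} to $\alpha\left(\mathcal{F}^\circ_{g_\alpha}\right)^{-1}$ to conclude $\alpha=\mathcal{F}^\circ_{g_\alpha}$. The only cosmetic difference is that you verify bijectivity of $g_\alpha$ via the companion map $g_{\alpha^{-1}}$, whereas the paper deduces it directly from the bijectivity of $\alpha$; both are fine.
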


  \begin{proof}[\textsl{Proof}]
  Define the map $\mathcal{F}\colon \mathcal{S}_\kappa\to H\left(\mathbb{I}\right)$ in the following way:
  \begin{equation*}
  \forall g\in \mathcal{S}_\kappa \quad \left(g\right)\mathcal{F}=\mathcal{F}^\circ_g,
  \end{equation*}
  Since $\mathcal{F}^\circ_g$ is an order automorphism of the poset $\left(\sigma{\mathbb{N}^\kappa}, \leqslant\right)$ we have that the map $\mathcal{F}^\circ_g$ is an element of the group of units $H\left(\mathbb{I}\right)$, so $\mathcal{F}$ is well-defined.
  Next, we shall show that the map $\mathcal{F}$ is an isomorphism.

  Statement $\left(iv\right)$ of Lemma~\ref{lemma-1.2-3} implies that the map $\mathcal{F}$ is a homomorphism and statement $\left(vii\right)$ of Lemma~\ref{lemma-1.2-3} implies that $\mathcal{F}$ is injective.

  We shall show that $\mathcal{F}$ is surjective.
  Let $\alpha \in H\left(\mathbb{I}\right)$. Lemma~\ref{lemma-1.1} implies that for any $x\in \kappa$ there exists $y\in \kappa$ such that $\left(2_x\right)\alpha = 2_y$. We define the map $g\colon \kappa\to \kappa$ in the following way: $\left(x\right)g = y$. Since $\alpha$ is a bijection so is $g$.

  Now consider the composition $\alpha \circ \mathcal{F}^\circ_{g^{-1}}$.
  Let $x\in\kappa$. The definition of the map $g$ implies that
  \begin{equation*}
  \left(2_x\right)\left(\alpha \circ \mathcal{F}^\circ_{g^{-1}}\right)=\left(2_{\left(x\right)g}\right)\mathcal{F}^\circ_{g^{-1}}
  \end{equation*}
  and statement $\left(v\right)$ of Lemma~\ref{lemma-1.2-3} implies that $\left(2_{\left(x\right)g}\right)\mathcal{F}^\circ_{g^{-1}}=2_x$, so $\left(2_x\right)\left(\alpha \circ \mathcal{F}^\circ_{g^{-1}}\right)=2_x$. By Lemma~\ref{lemma-1.2}, $\alpha \circ \mathcal{F}^\circ_{g^{-1}}$ is identity map, therefore $\alpha=\left(\mathcal{F}^\circ_{g^{-1}}\right)^{-1}=\mathcal{F}^\circ_g.$
  \end{proof}

  Theorems~2.3 and 2.20 from~\cite{Clifford-Preston-1961-1967} and Theorem~\ref{theorem-1.3} imply the following corollary.

  \begin{corollary}\label{corollary-1.4}
    For any infinite cardinal $\kappa$ every maximal subgroup of the semigroup $\mathcal{IP\!F}\left(\sigma{\mathbb{N}^\kappa}\right)$ is isomorphic to the group $\mathcal{S}_\kappa$ of all bijections of the cardinal $\kappa$.
  \end{corollary}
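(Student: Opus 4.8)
The plan is to identify the maximal subgroups of $\mathcal{IP\!F}\left(\sigma{\mathbb{N}^\kappa}\right)$ with the $\mathscr{H}$-classes of its idempotents, to note that all of these lie in one $\mathscr{D}$-class because the semigroup is bisimple, and then to transport the description of $H\left(\mathbb{I}\right)$ obtained in Theorem~\ref{theorem-1.3} to every such $\mathscr{H}$-class.

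First I would recall the standard structure theory of $\mathscr{H}$-classes (Theorem~2.3 of~\cite{Clifford-Preston-1961-1967}): for an idempotent $\varepsilon$ of a semigroup $S$, the $\mathscr{H}$-class $H_\varepsilon$ is a subgroup of $S$ with identity $\varepsilon$, and it is the largest subgroup of $S$ having $\varepsilon$ as its identity; thus the maximal subgroups of $\mathcal{IP\!F}\left(\sigma{\mathbb{N}^\kappa}\right)$ are precisely the $\mathscr{H}$-classes $H_\varepsilon$ with $\varepsilon\in E\left(\mathcal{IP\!F}\left(\sigma{\mathbb{N}^\kappa}\right)\right)$. In particular, since $\mathbb{I}$ is an idempotent, the $\mathscr{H}$-class $H_{\mathbb{I}}$ coincides with the group of units $H\left(\mathbb{I}\right)$, and Theorem~\ref{theorem-1.3} gives $H_{\mathbb{I}}=H\left(\mathbb{I}\right)\cong\mathcal{S}_\kappa$.

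Next I would invoke Proposition~\ref{proposition-2.1}$\left(vi\right)$: the semigroup $\mathcal{IP\!F}\left(\sigma{\mathbb{N}^\kappa}\right)$ is bisimple, so it has a single $\mathscr{D}$-class, and in particular every idempotent $\varepsilon$ satisfies $\varepsilon\mathscr{D}\mathbb{I}$. By Theorem~2.20 of~\cite{Clifford-Preston-1961-1967} (a consequence of Green's lemmas), $\mathscr{D}$-equivalent idempotents have isomorphic $\mathscr{H}$-classes; applying this to the pair $\varepsilon$ and $\mathbb{I}$ yields $H_\varepsilon\cong H_{\mathbb{I}}\cong\mathcal{S}_\kappa$ for every idempotent $\varepsilon$, which is exactly the assertion of the corollary.

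I do not expect a genuine obstacle here, since everything is a formal consequence of results already established; the only point worth a line of care is that the bijection $H_{\mathbb{I}}\to H_\varepsilon$ produced by Green's lemmas is a group isomorphism, not merely a bijection of sets. Should one wish to avoid quoting that black box, an explicit argument is available using the elements constructed in the proof of Proposition~\ref{proposition-2.1}$\left(vi\right)$: for an idempotent $\varepsilon$ with $\operatorname{dom}\varepsilon=\operatorname{ran}\varepsilon={\uparrow}d_\varepsilon$, take $\alpha\colon{\uparrow}d_\varepsilon\to\sigma{\mathbb{N}^\kappa}$, $z\mapsto z-d_\varepsilon+\mathbf{1}$, and $\beta=\alpha^{-1}$, so that $\alpha\beta=\varepsilon$ and $\beta\alpha=\mathbb{I}$; one then checks directly that $\gamma\mapsto\alpha\gamma\beta$ is a group isomorphism of $H\left(\mathbb{I}\right)$ onto $H_\varepsilon$, with inverse $\delta\mapsto\beta\delta\alpha$. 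This exhibits the isomorphism $H_\varepsilon\cong\mathcal{S}_\kappa$ completely concretely, and also makes transparent why the bisimplicity of $\mathcal{IP\!F}\left(\sigma{\mathbb{N}^\kappa}\right)$ is exactly what is needed.
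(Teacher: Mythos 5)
Your argument is exactly the paper's: the corollary is deduced there by combining Theorems~2.3 and~2.20 of \cite{Clifford-Preston-1961-1967} with Theorem~\ref{theorem-1.3} and the bisimplicity established in Proposition~\ref{proposition-2.1}$\left(vi\right)$, which is precisely the route you take (your explicit isomorphism $\gamma\mapsto\alpha\gamma\beta$ is a correct, optional elaboration of the same step). No issues.
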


  \begin{proposition}\label{proposition-1.5}
    For any infinite cardinal $\kappa$ and for any $\alpha \in \mathcal{IP\!F}\left(\sigma{\mathbb{N}^\kappa}\right)$ there exists a unique bijection $g_\alpha\in \mathcal{S}_\kappa$ such that $\alpha=\rho_{\alpha}\mathcal{F}^\circ_{g_\alpha}\lambda_{\alpha}$.
  \end{proposition}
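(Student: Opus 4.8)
The plan is to reduce everything to Theorem~\ref{theorem-1.3} by ``peeling off'' the translations $\rho_\alpha$ and $\lambda_\alpha$. First I would record the elementary facts I need about these maps. Both $\rho_\alpha$ and $\lambda_\alpha$ lie in $\mathcal{IP\!F}\left(\sigma{\mathbb{N}^\kappa}\right)$ (this is noted at the point where they are defined): $\rho_\alpha$ is the order isomorphism ${\uparrow}d_\alpha\to\sigma{\mathbb{N}^\kappa}$, $a\mapsto a-d_\alpha+\mathbf{1}$, with inverse $\rho_\alpha^{-1}\colon\sigma{\mathbb{N}^\kappa}\to{\uparrow}d_\alpha$, $b\mapsto b+d_\alpha-\mathbf{1}$, and dually $\lambda_\alpha$ is the order isomorphism $\sigma{\mathbb{N}^\kappa}\to{\uparrow}r_\alpha$, $a\mapsto a+r_\alpha-\mathbf{1}$, with inverse $\lambda_\alpha^{-1}\colon{\uparrow}r_\alpha\to\sigma{\mathbb{N}^\kappa}$, $b\mapsto b-r_\alpha+\mathbf{1}$. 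Consequently $\rho_\alpha^{-1}\rho_\alpha=\lambda_\alpha\lambda_\alpha^{-1}=\mathbb{I}$, while $\rho_\alpha\rho_\alpha^{-1}$ and $\lambda_\alpha^{-1}\lambda_\alpha$ are the identity maps of $\operatorname{dom}\alpha={\uparrow}d_\alpha$ and of $\operatorname{ran}\alpha={\uparrow}r_\alpha$, respectively.

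Next I would put $\gamma=\rho_\alpha^{-1}\,\alpha\,\lambda_\alpha^{-1}$ and check, by chasing domains and ranges (recall that maps act on the right, so in $\xi\eta$ one applies $\xi$ first), that $\gamma\in H\left(\mathbb{I}\right)$. Indeed $\operatorname{ran}\rho_\alpha^{-1}={\uparrow}d_\alpha=\operatorname{dom}\alpha$ and $\operatorname{ran}\alpha={\uparrow}r_\alpha=\operatorname{dom}\lambda_\alpha^{-1}$, so $\gamma$ is defined on all of $\operatorname{dom}\rho_\alpha^{-1}=\sigma{\mathbb{N}^\kappa}$ and has range $\operatorname{ran}\lambda_\alpha^{-1}=\sigma{\mathbb{N}^\kappa}$; being a composition of order isomorphisms it is an order automorphism of $\left(\sigma{\mathbb{N}^\kappa},\leqslant\right)$, i.e. $\gamma\in H\left(\mathbb{I}\right)$. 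By Theorem~\ref{theorem-1.3} there is a unique $g_\alpha\in\mathcal{S}_\kappa$ with $\gamma=\mathcal{F}^\circ_{g_\alpha}$. Then
\begin{equation*}
\rho_\alpha\,\mathcal{F}^\circ_{g_\alpha}\,\lambda_\alpha
=\rho_\alpha\left(\rho_\alpha^{-1}\,\alpha\,\lambda_\alpha^{-1}\right)\lambda_\alpha
=\left(\rho_\alpha\rho_\alpha^{-1}\right)\alpha\left(\lambda_\alpha^{-1}\lambda_\alpha\right)
=\alpha,
\end{equation*}
because $\rho_\alpha\rho_\alpha^{-1}$ acts as the identity on $\operatorname{dom}\alpha$ and $\lambda_\alpha^{-1}\lambda_\alpha$ as the identity on $\operatorname{ran}\alpha$.

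For uniqueness, suppose $\alpha=\rho_\alpha\,\mathcal{F}^\circ_{g}\,\lambda_\alpha=\rho_\alpha\,\mathcal{F}^\circ_{h}\,\lambda_\alpha$ for some $g,h\in\mathcal{S}_\kappa$. Multiplying on the left by $\rho_\alpha^{-1}$ and on the right by $\lambda_\alpha^{-1}$ and using $\rho_\alpha^{-1}\rho_\alpha=\lambda_\alpha\lambda_\alpha^{-1}=\mathbb{I}$ together with the fact that $\mathbb{I}$ is the unit of $\mathcal{IP\!F}\left(\sigma{\mathbb{N}^\kappa}\right)$, both sides collapse to $\mathcal{F}^\circ_g=\mathcal{F}^\circ_h$, whence $g=h$ by statement $\left(vii\right)$ of Lemma~\ref{lemma-1.2-3} (equivalently, by the injectivity of $\mathcal{F}$ established in Theorem~\ref{theorem-1.3}). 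I do not expect a real obstacle; the one point requiring care is the bookkeeping of domains and ranges under composition, since it is precisely the matching $\operatorname{ran}\rho_\alpha^{-1}=\operatorname{dom}\alpha$ and $\operatorname{ran}\alpha=\operatorname{dom}\lambda_\alpha^{-1}$ that makes $\rho_\alpha^{-1}\alpha\lambda_\alpha^{-1}$ a genuine order \emph{automorphism} of $\sigma{\mathbb{N}^\kappa}$ rather than a merely partial map, and hence puts us in the situation of Theorem~\ref{theorem-1.3}.
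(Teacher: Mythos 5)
Your proposal is correct and follows essentially the same route as the paper: conjugate away the translations by forming $\rho_\alpha^{-1}\alpha\lambda_\alpha^{-1}$, observe it lies in $H\left(\mathbb{I}\right)$, invoke Theorem~\ref{theorem-1.3} to write it as $\mathcal{F}^\circ_{g_\alpha}$, and obtain uniqueness by cancelling with $\rho_\alpha^{-1}\rho_\alpha=\lambda_\alpha\lambda_\alpha^{-1}=\mathbb{I}$ and the injectivity of $g\mapsto\mathcal{F}_g$. No substantive differences from the paper's argument.
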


  \begin{proof}[\textsl{Proof}]
  Let $\alpha \in \mathcal{IP\!F}\left(\sigma{\mathbb{N}^\kappa}\right)$. For the element $\rho^{-1}_{\alpha}\alpha\lambda^{-1}_{\alpha}$ we have that
  \begin{equation*}
  \rho_{\alpha}\rho^{-1}_{\alpha}\alpha\lambda^{-1}_{\alpha}\lambda_{\alpha} = \varepsilon\alpha\iota,
  \end{equation*}
  where $\varepsilon$ and $\iota$ are idempotents with $\operatorname{dom}\varepsilon = \operatorname{dom}\alpha$ and $\operatorname{dom}\iota = \operatorname{ran}\alpha$, so $\varepsilon\alpha\iota = \alpha$. Since
  \begin{equation*}
    \operatorname{dom}\left(\rho^{-1}_{\alpha}\alpha\lambda^{-1}_{\alpha}\right) = \operatorname{ran}\left({\rho^{-1}_{\alpha}\alpha\lambda^{-1}_{\alpha}}\right) = \sigma{\mathbb{N}^\kappa},
  \end{equation*}
   we have that ${\rho^{-1}_{\alpha}\alpha\lambda^{-1}_{\alpha}}\in H\left(\mathbb{I}\right)$. By Theorem~\ref{theorem-1.3}, for ${\rho^{-1}_{\alpha}\alpha\lambda^{-1}_{\alpha}}$ there exists a bijection ${g_\alpha}\in \mathcal{S}_\kappa$ such that ${\rho^{-1}_{\alpha}\alpha\lambda^{-1}_{\alpha}}=\mathcal{F}^\circ_{g_\alpha}$.

  Suppose that there exists $h\in \mathcal{S}_\kappa$ such that $\alpha = \rho_{\alpha}\mathcal{F}^\circ_h\lambda_{\alpha}$. Then the equality
  \begin{equation*}
  {\rho_{\alpha}\mathcal{F}^\circ_h\lambda_{\alpha} = \rho_{\alpha}\mathcal{F}^\circ_{g_\alpha}\lambda_{\alpha}}
  \end{equation*}
  implies that
  \begin{equation*}
    {\left(\rho^{-1}_{\alpha}\rho_{\alpha}\right)\mathcal{F}^\circ_h\left(\lambda_{\alpha}\lambda^{-1}_{\alpha}\right) = \left(\rho^{-1}_{\alpha}\rho_{\alpha}\right)\mathcal{F}^\circ_{g_\alpha}\left(\lambda_{\alpha}\lambda^{-1}_{\alpha}\right)}.
  \end{equation*}
  The definition of $\lambda_\alpha, \rho_\alpha$ implies that
  \begin{equation*}
    \rho^{-1}_{\alpha}\rho_{\alpha} = \lambda_{\alpha}\lambda^{-1}_{\alpha} = \mathbb{I},
  \end{equation*}
  so $\mathcal{F}^\circ_h = \mathcal{F}^\circ_{g_\alpha}$.
  Statement $\left(v\right)$ of Lemma~\ref{lemma-1.2-3} implies that $h={g_\alpha}$.
  \end{proof}

  The following corollary states that every order isomorphism $\alpha$ in the semigroup $\mathcal{IP\!F}\left(\sigma{\mathbb{N}^\kappa}\right)$ can be uniquely represented as a composition of three basic transformations: shifting to the origin of coordinates, an order isomorphism of entire $\sigma{\mathbb{N}^\kappa}$, and then shifting to the range of $\alpha$.

  \begin{corollary}\label{corollary-1.6}
    For any infinite cardinal $\kappa$ and for any element $\alpha \in \mathcal{IP\!F}\left(\sigma{\mathbb{N}^\kappa}\right)$ the representation $\alpha = \rho_{\alpha}\mathcal{F}^\circ_{g_\alpha}\lambda_{\alpha}$ is unique.
    \end{corollary}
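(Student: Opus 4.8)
The content of this corollary, over and above what Proposition~\ref{proposition-1.5} already supplies, is that \emph{all three} factors in the product $\rho_{\alpha}\mathcal{F}^\circ_{g_\alpha}\lambda_{\alpha}$ are individually determined by $\alpha$: if $\alpha=\rho\cdot\mathcal{F}^\circ_{g}\cdot\lambda$, where $\rho$ is any map with $\operatorname{ran}\rho=\sigma{\mathbb{N}^\kappa}$ and $\rho_{\rho}=\rho$ (i.e.\ a map of the type used to define $\rho_\alpha$), $\lambda$ is any map with $\operatorname{dom}\lambda=\sigma{\mathbb{N}^\kappa}$ and $\lambda_{\lambda}=\lambda$, and $g\in\mathcal{S}_\kappa$, then necessarily $\rho=\rho_{\alpha}$, $\lambda=\lambda_{\alpha}$ and $g=g_\alpha$. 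The plan is to recover $\rho$ and $\lambda$ from the domain and range of $\alpha$, and then read off $g$ from Proposition~\ref{proposition-1.5}.

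First I would compute the domain and range of the product $\rho\cdot\mathcal{F}^\circ_{g}\cdot\lambda$. Since $\mathcal{F}^\circ_g$ is an order automorphism of the whole poset $(\sigma{\mathbb{N}^\kappa},\leqslant)$, so that $\operatorname{dom}\mathcal{F}^\circ_g=\operatorname{ran}\mathcal{F}^\circ_g=\sigma{\mathbb{N}^\kappa}$, and since the composition of partial maps is read left to right, the chain $\operatorname{dom}\rho\to\sigma{\mathbb{N}^\kappa}\to\sigma{\mathbb{N}^\kappa}\to\operatorname{ran}\lambda$ gives $\operatorname{dom}(\rho\,\mathcal{F}^\circ_{g}\,\lambda)=\operatorname{dom}\rho$ and $\operatorname{ran}(\rho\,\mathcal{F}^\circ_{g}\,\lambda)=\operatorname{ran}\lambda$. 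Hence $\operatorname{dom}\rho=\operatorname{dom}\alpha={\uparrow}d_\alpha$ and $\operatorname{ran}\lambda=\operatorname{ran}\alpha={\uparrow}r_\alpha$, so $d_\rho=d_\alpha$ and $r_\lambda=r_\alpha$.

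Next I would use that a map of the $\rho$-type is completely determined by its value $d_\rho$: by definition $\operatorname{dom}\rho={\uparrow}d_\rho$, $\operatorname{ran}\rho=\sigma{\mathbb{N}^\kappa}$ and $(a)\rho=a-d_\rho+\mathbf{1}$; since $d_\rho=d_\alpha=d_{\rho_\alpha}$, this forces $\rho=\rho_{\alpha}$. Symmetrically a $\lambda$-type map is determined by $r_\lambda$ via $(a)\lambda=a+r_\lambda-\mathbf{1}$, and $r_\lambda=r_\alpha=r_{\lambda_\alpha}$ gives $\lambda=\lambda_{\alpha}$. Finally, from $\rho_{\alpha}\mathcal{F}^\circ_{g}\lambda_{\alpha}=\alpha=\rho_{\alpha}\mathcal{F}^\circ_{g_\alpha}\lambda_{\alpha}$, multiplying on the left by $\rho_{\alpha}^{-1}$ and on the right by $\lambda_{\alpha}^{-1}$ and using $\rho^{-1}_{\alpha}\rho_{\alpha}=\lambda_{\alpha}\lambda^{-1}_{\alpha}=\mathbb{I}$ (recorded in the proof of Proposition~\ref{proposition-1.5}) yields $\mathcal{F}^\circ_{g}=\mathcal{F}^\circ_{g_\alpha}$, whence statement~$(v)$ of Lemma~\ref{lemma-1.2-3} gives $g=g_\alpha$.

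There is no serious obstacle here; the argument is essentially bookkeeping with domains and ranges together with the observation that the outer factors carry no extra information beyond $d_\alpha$ and $r_\alpha$. The only points needing care are making precise what ``unique'' asserts — uniqueness of the triple of factors within the prescribed classes of maps, not merely of $g_\alpha$ — and checking that the left/right cancellation step is legitimate, i.e.\ that $\rho_\alpha^{-1}\rho_\alpha$ and $\lambda_\alpha\lambda_\alpha^{-1}$ act as $\mathbb{I}$ on all of $\sigma{\mathbb{N}^\kappa}$ rather than on a proper principal filter, which is exactly the content of the identities $\rho^{-1}_{\alpha}\rho_{\alpha}=\lambda_{\alpha}\lambda^{-1}_{\alpha}=\mathbb{I}$ already at our disposal.
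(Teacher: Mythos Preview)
Your proof is correct. The paper states this corollary without proof, treating it as an immediate consequence of Proposition~\ref{proposition-1.5} together with the fact that $\rho_\alpha$ and $\lambda_\alpha$ are \emph{defined} from $d_\alpha$ and $r_\alpha$ (hence already determined by $\alpha$); your argument spells out precisely this implicit reasoning by recovering $d_\alpha$ and $r_\alpha$ from the domain and range of the composite and then invoking the uniqueness of $g_\alpha$ from Proposition~\ref{proposition-1.5}.
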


    For any $\alpha\in \mathcal{IP\!F}\left(\sigma{\mathbb{N}^\kappa}\right)$ we shall use this notation $g_\alpha$ to denote the element of $S_\kappa$ that implements this representation $\alpha = \rho_{\alpha}\mathcal{F}^\circ_{g_\alpha}\lambda_{\alpha}$.

  \begin{lemma}\label{lemma-1.8}
    Let $\kappa$ be any infinite cardinal and $\alpha,\beta \in \mathcal{IP\!F}\left(\sigma{\mathbb{N}^\kappa}\right)$, then
  \begin{equation*}
    \begin{split}
      &d_{\alpha\beta}=\left(\operatorname{max}\{r_{\alpha},d_{\beta}\}-r_{\alpha}\right)\mathcal{F}_{g_\alpha}^{-1}+d_{\alpha};\\
      &r_{\alpha\beta}=\left(\operatorname{max}\{r_{\alpha},d_{\beta}\}-d_{\beta}\right)\mathcal{F}_{g_\beta}+r_{\beta};\\
      &\mathcal{F}^\circ_{g_{\alpha\beta}}=\mathcal{F}^\circ_{g_\alpha}\mathcal{F}^\circ_{g_\beta}.
    \end{split}
    \end{equation*}
  \end{lemma}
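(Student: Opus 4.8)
The three identities all rest on a single explicit description of how an element of $\mathcal{IP\!F}\left(\sigma{\mathbb{N}^\kappa}\right)$ acts. Writing $\alpha=\rho_{\alpha}\mathcal{F}^\circ_{g_\alpha}\lambda_{\alpha}$ (Proposition~\ref{proposition-1.5}) and using that $\mathcal{F}_g$ is additive with $\left(\mathbf{1}\right)\mathcal{F}_g=\mathbf{1}$ (Lemma~\ref{lemma-1.2-3}$\left(vi\right),\left(viii\right),\left(ix\right)$), one computes $\left(a\right)\alpha=\left(a-d_\alpha\right)\mathcal{F}_{g_\alpha}+r_\alpha$ for $a\in{\uparrow}d_\alpha$; dually, since $\left(\mathcal{F}^\circ_g\right)^{-1}=\mathcal{F}^\circ_{g^{-1}}$ and $\mathcal{F}_{g^{-1}}=\mathcal{F}_g^{-1}$ (Lemma~\ref{lemma-1.2-3}$\left(i\right)$), one gets $\left(b\right)\alpha^{-1}=\left(b-r_\alpha\right)\mathcal{F}_{g_\alpha}^{-1}+d_\alpha$ for $b\in{\uparrow}r_\alpha$. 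The plan is to feed these formulas into: (a) a description of $\operatorname{dom}\left(\alpha\beta\right)$; (b) the identity $\left(\alpha\beta\right)^{-1}=\beta^{-1}\alpha^{-1}$; and (c) the representation of $\mathcal{F}^\circ_{g_{\alpha\beta}}$ furnished again by Proposition~\ref{proposition-1.5}.

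For $d_{\alpha\beta}$: one has $\operatorname{dom}\left(\alpha\beta\right)=\left\{a\in\operatorname{dom}\alpha\mid\left(a\right)\alpha\in\operatorname{dom}\beta\right\}$ and $\operatorname{ran}\alpha\cap\operatorname{dom}\beta={\uparrow}r_\alpha\cap{\uparrow}d_\beta={\uparrow}\operatorname{max}\{r_\alpha,d_\beta\}$, so $\alpha^{-1}$, being an order isomorphism onto ${\uparrow}d_\alpha$, maps ${\uparrow}\operatorname{max}\{r_\alpha,d_\beta\}$ onto $\operatorname{dom}\left(\alpha\beta\right)$, which is therefore the principal filter generated by $\left(\operatorname{max}\{r_\alpha,d_\beta\}\right)\alpha^{-1}$. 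Hence $d_{\alpha\beta}=\left(\operatorname{max}\{r_\alpha,d_\beta\}\right)\alpha^{-1}$, and substituting $b=\operatorname{max}\{r_\alpha,d_\beta\}$ into the formula for $\alpha^{-1}$ gives the claimed value of $d_{\alpha\beta}$.

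For $r_{\alpha\beta}$: since $r_{\alpha\beta}=d_{\left(\alpha\beta\right)^{-1}}=d_{\beta^{-1}\alpha^{-1}}$, apply the previous formula with $\alpha,\beta$ replaced by $\beta^{-1},\alpha^{-1}$. The data $d_{\alpha^{-1}}=r_\alpha$, $r_{\beta^{-1}}=d_\beta$, $d_{\beta^{-1}}=r_\beta$ are immediate from $\operatorname{dom}\gamma^{-1}=\operatorname{ran}\gamma$ and $\operatorname{ran}\gamma^{-1}=\operatorname{dom}\gamma$, and it remains to check $g_{\beta^{-1}}=g_\beta^{-1}$. For this I would observe that $\lambda_\beta^{-1}=\rho_{\beta^{-1}}$ and $\rho_\beta^{-1}=\lambda_{\beta^{-1}}$ — in each pair the two maps have the same domain, the same range, and the same translation vector, directly from the definitions of $\lambda$ and $\rho$ — so that $\beta^{-1}=\lambda_\beta^{-1}\mathcal{F}^\circ_{g_\beta^{-1}}\rho_\beta^{-1}=\rho_{\beta^{-1}}\mathcal{F}^\circ_{g_\beta^{-1}}\lambda_{\beta^{-1}}$, and the uniqueness in Proposition~\ref{proposition-1.5} forces $g_{\beta^{-1}}=g_\beta^{-1}$. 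Then $\mathcal{F}_{g_{\beta^{-1}}}^{-1}=\mathcal{F}_{g_\beta}$, and a short simplification produces the stated formula for $r_{\alpha\beta}$.

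For $\mathcal{F}^\circ_{g_{\alpha\beta}}$: Proposition~\ref{proposition-1.5} gives $\mathcal{F}^\circ_{g_{\alpha\beta}}=\rho_{\alpha\beta}^{-1}\left(\alpha\beta\right)\lambda_{\alpha\beta}^{-1}\in H\left(\mathbb{I}\right)$, so by Lemma~\ref{lemma-1.2-3-next}$\left(v\right)$ it suffices to evaluate this element at $2_x$ for every $x\in\kappa$. I would trace $2_x$ through the composition: $\left(2_x\right)\rho_{\alpha\beta}^{-1}=2_x+d_{\alpha\beta}-\mathbf{1}$; applying $\alpha$ via $\left(a\right)\alpha=\left(a-d_\alpha\right)\mathcal{F}_{g_\alpha}+r_\alpha$, using additivity, the relation $\left(2_x-\mathbf{1}\right)\mathcal{F}_{g_\alpha}=2_{\left(x\right)g_\alpha}-\mathbf{1}$, and the already proved identity $d_{\alpha\beta}-d_\alpha=\left(\operatorname{max}\{r_\alpha,d_\beta\}-r_\alpha\right)\mathcal{F}_{g_\alpha}^{-1}$, one obtains $\operatorname{max}\{r_\alpha,d_\beta\}+2_{\left(x\right)g_\alpha}-\mathbf{1}$; applying $\beta$ the same way and invoking the formula for $r_{\alpha\beta}$ yields $r_{\alpha\beta}+2_{\left(x\right)g_\alpha g_\beta}-\mathbf{1}$; finally $\lambda_{\alpha\beta}^{-1}$ subtracts $r_{\alpha\beta}-\mathbf{1}$, leaving $2_{\left(x\right)g_\alpha g_\beta}$. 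Thus $\left(2_x\right)\mathcal{F}^\circ_{g_{\alpha\beta}}=2_{\left(x\right)g_\alpha g_\beta}$ for all $x$, hence $g_{\alpha\beta}=g_\alpha g_\beta$ and $\mathcal{F}^\circ_{g_{\alpha\beta}}=\mathcal{F}^\circ_{g_\alpha}\mathcal{F}^\circ_{g_\beta}$ by Lemma~\ref{lemma-1.2-3-next}$\left(iv\right)$. I expect the main effort to be bookkeeping rather than anything conceptual: one must keep verifying that each partially defined factor is applied to a point inside its domain, which is guaranteed by $\operatorname{dom}\left(\alpha\beta\right)\subseteq\operatorname{dom}\alpha$ together with monotonicity, and the only mildly delicate structural point is the identity $g_{\beta^{-1}}=g_\beta^{-1}$ used in the computation of $r_{\alpha\beta}$.
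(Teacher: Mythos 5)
Your argument is correct, and it overlaps with the paper's proof only in the first identity: both you and the paper obtain $d_{\alpha\beta}$ by writing $\operatorname{dom}\left(\alpha\beta\right)=\left({\uparrow}\operatorname{max}\{r_\alpha,d_\beta\}\right)\alpha^{-1}$ and pushing $\operatorname{max}\{r_\alpha,d_\beta\}$ through the decomposition of $\alpha^{-1}$. For the other two identities you take genuinely different routes. The paper computes $r_{\alpha\beta}$ directly, as the generator of $\operatorname{ran}\left(\alpha\beta\right)=\left({\uparrow}\operatorname{max}\{r_\alpha,d_\beta\}\right)\beta$, whereas you derive it from the first formula applied to $\beta^{-1}\alpha^{-1}$; this costs you the extra lemma $g_{\beta^{-1}}=g_\beta^{-1}$, which you establish correctly via $\lambda_\beta^{-1}=\rho_{\beta^{-1}}$, $\rho_\beta^{-1}=\lambda_{\beta^{-1}}$ and the uniqueness in Proposition~\ref{proposition-1.5}, and which is a reusable structural fact the paper never records; the paper's direct computation is shorter but yields only the formula. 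For the third identity the paper verifies the equality $\alpha\beta=\rho_{\alpha\beta}\mathcal{F}^\circ_{g_\alpha}\mathcal{F}^\circ_{g_\beta}\lambda_{\alpha\beta}$ at an arbitrary point $a=d_{\alpha\beta}+b$ of the domain and then invokes uniqueness of the representation (Corollary~\ref{corollary-1.6}), while you evaluate $\mathcal{F}^\circ_{g_{\alpha\beta}}=\rho_{\alpha\beta}^{-1}\left(\alpha\beta\right)\lambda_{\alpha\beta}^{-1}$ only at the test elements $2_x$ and conclude $g_{\alpha\beta}=g_\alpha g_\beta$ from Lemma~\ref{lemma-1.2-3}$\left(v\right)$, since $2_{\left(x\right)g_{\alpha\beta}}=2_{\left(x\right)g_\alpha g_\beta}$ for all $x$ forces equality of the permutations; both are valid, your version checks fewer points at the price of relying on the already proved first two formulas inside the trace of $2_x$, and the domain bookkeeping you flag ($2_x+d_{\alpha\beta}-\mathbf{1}\geqslant d_{\alpha\beta}$, so the point lies in $\operatorname{dom}\left(\alpha\beta\right)$) is indeed all that is needed.
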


    \begin{proof}[\textsl{Proof}]
      By the definition of the composition of the partial maps:
      \begin{align*}
        \operatorname{dom}\left(\alpha\beta\right)&=\left(\operatorname{ran}\alpha\cap \operatorname{dom}\beta\right)\alpha^{-1}= \\
         &={\left({\uparrow}r_\alpha\cap {\uparrow}d_\beta\right)}\alpha^{-1}= \\
         &= \left({\uparrow}\operatorname{max}\{r_\alpha,d_\beta\}\right)\alpha^{-1}.
      \end{align*}
        Since $\alpha$ is an order isomorphism we get that
        \begin{equation*}
          {\left({\uparrow}{\operatorname{max}\{r_\alpha,d_\beta\}}\right)}\alpha^{-1} = {\uparrow}\left[{{\left(\operatorname{max}\{r_\alpha,d_\beta\}\right)}\alpha^{-1}}\right],
        \end{equation*}
        and then, by Corollary~\ref{corollary-1.6} and by Lemma~\ref{lemma-1.2-3}$\left[\left(vi\right),\left(viii\right)\right]$,
        \begin{equation*}
        \begin{split}
        \operatorname{dom}\left(\alpha\beta\right)&={\uparrow}\left[{{\left(\operatorname{max}\{r_\alpha,d_\beta\}\right)}\alpha^{-1}}\right]=\\
        &= {\uparrow}\left(\left[\operatorname{max}\{ r_\alpha,d_\beta \}\right]\lambda_\alpha^{-1}\left({\mathcal{F}^\circ_{g_\alpha}}\right)^{-1}\rho_\alpha^{-1}\right)=\\
        &= {\uparrow}\left(\left[\operatorname{max}\{ r_\alpha,d_\beta\}-r_{\alpha}+\mathbf{1}\right]\left({\mathcal{F}^\circ_{g_\alpha}}\right)^{-1}\rho_\alpha^{-1}\right)=\\
        &= {\uparrow}\left(\left[\left(\operatorname{max}\{ r_\alpha,d_\beta\}-r_{\alpha}\right)\mathcal{F}_{g_\alpha}^{-1}+\mathbf{1}\right]\rho_\alpha^{-1}\right)=\\
        &= {\uparrow}\left[\left(\operatorname{max}\{ r_\alpha,d_\beta\}-r_{\alpha}\right)\mathcal{F}_{g_\alpha}^{-1}+d_{\alpha}\right].
        \end{split}
        \end{equation*}

      Similarly, by the definition of the range of the composition of the partial maps:
      \begin{align*}
        \operatorname{ran}\left(\alpha\beta\right)&=\left(\operatorname{ran}\alpha\cap \operatorname{dom}\beta\right)\beta= \\
         & ={\left({\uparrow}r_\alpha\cap {\uparrow}d_\beta\right)}\beta =\\
         & = \left({\uparrow}\operatorname{max}\{r_\alpha,d_\beta\}\right)\beta.
      \end{align*}
        Since $\beta$ is an order isomorphism we get that
        \begin{equation*}
          {\left({\uparrow}{\operatorname{max}\{r_\alpha,d_\beta\}}\right)}\beta = {\uparrow}\left[{{\left(\operatorname{max}\{r_\alpha,d_\beta\}\right)}\beta}\right],
        \end{equation*}
        and then, by Corollary~\ref{corollary-1.6} and by Lemma~\ref{lemma-1.2-3}$\left[\left(vi\right),\left(viii\right)\right]$,
        \begin{equation*}
        \begin{split}
        \operatorname{ran}\left(\alpha\beta\right)&={\uparrow}\left[{{\left(\operatorname{max}\{r_\alpha,d_\beta\}\right)}\beta}\right]=\\
        &= {\uparrow}\left(\left[\operatorname{max}\{ r_\alpha,d_\beta \}\right]\lambda_\beta \mathcal{F}^\circ_{g_\beta}\rho_\beta\right)=\\
        &= {\uparrow}\left(\left[\operatorname{max}\{ r_\alpha,d_\beta\}-d_{\beta}+\mathbf{1}\right]\mathcal{F}^\circ_{g_\beta}\rho_\beta\right)=\\
        &= {\uparrow}\left(\left[\left(\operatorname{max}\{ r_\alpha,d_\beta\}-d_{\beta}\right)\mathcal{F}_{g_\beta}+\mathbf{1}\right]\rho_\beta\right)=\\
        &= {\uparrow}\left[\left(\operatorname{max}\{ r_\alpha,d_\beta\}-d_{\beta}\right)\mathcal{F}_{g_\beta}+r_{\beta}\right].
        \end{split}
        \end{equation*}

      We shall prove that
      \begin{equation*}
      \alpha\beta = \rho_{\alpha\beta}\mathcal{F}^\circ_{g_\alpha}\mathcal{F}^\circ_{g_\beta}\lambda_{\alpha\beta}.
      \end{equation*}
      The definition of the maps $\rho_{\alpha\beta}$, $\mathcal{F}^\circ_{g_\alpha}$, $\mathcal{F}^\circ_{g_\beta}$, $\lambda_{\alpha\beta}$ and the definition of the composition of the partial maps imply that
      \begin{equation*}
        \operatorname{dom}\left(\rho_{\alpha\beta}\mathcal{F}^\circ_{g_\alpha}\mathcal{F}^\circ_{g_\beta}\lambda_{\alpha\beta}\right) = \operatorname{dom}\left(\alpha\beta\right)
      \end{equation*}
        and
        \begin{equation*}
          \operatorname{ran}\left(\rho_{\alpha\beta}\mathcal{F}^\circ_{g_\alpha}\mathcal{F}^\circ_{g_\beta}\lambda_{\alpha\beta}\right)  = \operatorname{ran}\left(\alpha\beta\right).
        \end{equation*}
      Now consider any $a\in \operatorname{dom}\left(\alpha\beta\right)$ and the representation $a=d_{\alpha\beta}+a-d_{\alpha\beta}$. Denote $a-d_{\alpha\beta}$ by $b$, then $a$ has the representation $a = d_{\alpha\beta}+b$.
      And consider the images of $a$ under the maps $\alpha\beta$ and $\rho_{\alpha\beta}\mathcal{F}^\circ_{g_\alpha}\mathcal{F}^\circ_{g_\beta}\lambda_{\alpha\beta}$:
      \begin{align*}
        \left(a\right)\alpha\beta &= \left(d_{\alpha\beta}+b\right)\alpha\beta
        =\\&= \left(\left[\operatorname{max}\{ r_\alpha,d_\beta\}-r_{\alpha}\right]\mathcal{F}_{g_\alpha}^{-1}+d_{\alpha}+b\right)\alpha\beta
        =\\&= \left(\left[\operatorname{max}\{ r_\alpha,d_\beta\}-r_{\alpha}\right]\mathcal{F}_{g_\alpha}^{-1}+d_{\alpha}+b\right)\rho_{\alpha}\mathcal{F}^\circ_{g_\alpha}\lambda_{\alpha}\rho_{\beta}\mathcal{F}^\circ_{g_\beta}\lambda_{\beta}
        =\\&= \left(\left[\operatorname{max}\{ r_\alpha,d_\beta\}-r_{\alpha}\right]\mathcal{F}_{g_\alpha}^{-1}+\mathbf{1}+b\right)\mathcal{F}^\circ_{g_\alpha}\lambda_{\alpha}\rho_{\beta}\mathcal{F}^\circ_{g_\beta}\lambda_{\beta}
        =\\&= \left(\operatorname{max}\{ r_\alpha,d_\beta\}-r_{\alpha}+\mathbf{1}+\left(b\right)\mathcal{F}_{g_\alpha}\right)\lambda_{\alpha}\rho_{\beta}\mathcal{F}^\circ_{g_\beta}\lambda_{\beta}
        =\\&= \left(\operatorname{max}\{ r_\alpha,d_\beta\}+\left(b\right)\mathcal{F}_{g_\alpha}\right)\rho_{\beta}\mathcal{F}^\circ_{g_\beta}\lambda_{\beta}
        =\\&= \left(\operatorname{max}\{ r_\alpha,d_\beta\}-d_\beta+\mathbf{1}+\left(b\right)\mathcal{F}_{g_\alpha}\right)\mathcal{F}^\circ_{g_\beta}\lambda_{\beta}
        =\\&= \left(\left[\operatorname{max}\{ r_\alpha,d_\beta\}-d_\beta\right]\mathcal{F}_{g_\beta}+\mathbf{1}+\left(b\right)\mathcal{F}_{g_\alpha}\mathcal{F}_{g_\beta}\right)\lambda_{\beta}
        =\\&= \left[\operatorname{max}\{ r_\alpha,d_\beta\}-d_\beta\right]\mathcal{F}_{g_\beta}+\left(b\right)\mathcal{F}_{g_\alpha}\mathcal{F}_{g_\beta} + r_\beta
        =\\&= r_{\alpha\beta} + \left(b\right)\mathcal{F}_{g_\alpha}\mathcal{F}_{g_\beta};
        \end{align*}
      \begin{align*}
  \left(a\right)\rho_{\alpha\beta}\mathcal{F}^\circ_{g_\alpha}\mathcal{F}^\circ_{g_\beta}\lambda_{\alpha\beta}
   &= \left(d_{\alpha\beta}+b\right)\rho_{\alpha\beta}\mathcal{F}^\circ_{g_\alpha}\mathcal{F}^\circ_{g_\beta}\lambda_{\alpha\beta}=\\
   &= \left(b+\mathbf{1}\right)\mathcal{F}^\circ_{g_\alpha}\mathcal{F}^\circ_{g_\beta}\lambda_{\alpha\beta} =\\
   &= \left(\left(b\right)\mathcal{F}_{g_\alpha}\mathcal{F}_{g_\beta}+\mathbf{1}\right)\lambda_{\alpha\beta}=\\
   &=  \left(b\right)\mathcal{F}_{g_\alpha}\mathcal{F}_{g_\beta} + r_{\alpha\beta}.
\end{align*}
    We have that $\alpha\beta=\rho_{\alpha\beta}\mathcal{F}^\circ_{g_\alpha}\mathcal{F}^\circ_{g_\beta}\lambda_{\alpha\beta}$, so by Corollary~\ref{corollary-1.6} $\mathcal{F}^\circ_{g_{\alpha\beta}}=\mathcal{F}^\circ_{g_\alpha}\mathcal{F}^\circ_{g_\beta}$.
    \end{proof}

    \begin{corollary}\label{corollary-1.8c}
      For any infinite cardinal $\kappa$ and for any elements $\alpha, \beta \in \mathcal{IP\!F}\left(\sigma{\mathbb{N}^\kappa}\right)$ the bijection $g_{\alpha\beta}$ is equals to $g_\alpha g_\beta$.
    \end{corollary}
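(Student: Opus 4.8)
The plan is to read the statement almost directly off the last line of Lemma~\ref{lemma-1.8}, which already establishes that $\mathcal{F}^\circ_{g_{\alpha\beta}}=\mathcal{F}^\circ_{g_\alpha}\mathcal{F}^\circ_{g_\beta}$. So the only thing left to do is to translate this equality of order automorphisms of $\left(\sigma{\mathbb{N}^\kappa},\leqslant\right)$ into the corresponding equality of bijections of $\kappa$.

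First I would invoke Lemma~\ref{lemma-1.2-3-next}, which transfers statements $\left(iv\right)$--$\left(xi\right)$ of Lemma~\ref{lemma-1.2-3} to $\mathcal{F}^\circ_g$. By statement $\left(iv\right)$ in that form we have $\mathcal{F}^\circ_{g_\alpha}\mathcal{F}^\circ_{g_\beta}=\mathcal{F}^\circ_{g_\alpha g_\beta}$, and combining this with the conclusion of Lemma~\ref{lemma-1.8} gives $\mathcal{F}^\circ_{g_{\alpha\beta}}=\mathcal{F}^\circ_{g_\alpha g_\beta}$. Then I would apply the injectivity of the assignment $g\mapsto\mathcal{F}^\circ_g$, which is statement $\left(vii\right)$ of Lemma~\ref{lemma-1.2-3} as carried over by Lemma~\ref{lemma-1.2-3-next}; alternatively one can argue pointwise on the elements $2_x$ via statement $\left(v\right)$, exactly as in the uniqueness part of the proof of Proposition~\ref{proposition-1.5}. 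This yields $g_{\alpha\beta}=g_\alpha g_\beta$, which is the assertion of the corollary.

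I do not expect any genuine obstacle here: all the substance has already been done inside Lemma~\ref{lemma-1.8}, and this corollary merely repackages its third conclusion. The one point I would double-check is bookkeeping with the composition convention: Lemma~\ref{lemma-1.2-3}$\left(iv\right)$ is stated as $\mathcal{F}_{gh}=\mathcal{F}_g\mathcal{F}_h$, and since maps in this paper act on the right, the factor $\mathcal{F}^\circ_{g_\alpha}$ coming from $\alpha$ must precede $\mathcal{F}^\circ_{g_\beta}$ coming from $\beta$, so the product one recovers is $g_\alpha g_\beta$ and not $g_\beta g_\alpha$; this is consistent with how $g_{\alpha\beta}$ was defined through $\alpha\beta=\rho_{\alpha\beta}\mathcal{F}^\circ_{g_{\alpha\beta}}\lambda_{\alpha\beta}$.
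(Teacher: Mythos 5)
Your argument is correct and is essentially the paper's own (implicit) one: the paper states this corollary without a separate proof precisely because it follows from the third equality of Lemma~\ref{lemma-1.8} combined with Lemma~\ref{lemma-1.2-3}$\left(iv\right)$ and the injectivity of $g\mapsto\mathcal{F}^\circ_g$ (statement $\left(vii\right)$, or pointwise via $\left(v\right)$ as in the uniqueness part of Proposition~\ref{proposition-1.5}). Your remark about the composition order is also consistent with the paper's right-action convention, so there is nothing to add.
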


    \begin{corollary}\label{corollary-1.8d}
      Let $\kappa$ be any infinite cardinal and $\varepsilon$ be the idempotent of the semigroup $\mathcal{IP\!F}\left(\sigma{\mathbb{N}^\kappa}\right)$, then $g_\varepsilon = id_\kappa$, $\mathcal{F}^\circ_{g_{\varepsilon}}=\mathbb{I}$.
    \end{corollary}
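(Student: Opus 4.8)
The plan is to derive the statement immediately from Corollary~\ref{corollary-1.8c}. Since $\varepsilon$ is an idempotent, $\varepsilon=\varepsilon\varepsilon$, and applying Corollary~\ref{corollary-1.8c} with $\alpha=\beta=\varepsilon$ gives $g_\varepsilon=g_{\varepsilon\varepsilon}=g_\varepsilon g_\varepsilon$ in the group $\mathcal{S}_\kappa$. A group contains a unique idempotent, namely its identity, so $g_\varepsilon g_\varepsilon=g_\varepsilon$ forces $g_\varepsilon=id_\kappa$.

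It then remains to observe that $\mathcal{F}^\circ_{id_\kappa}=\mathbb{I}$. This follows directly from the defining formula $\left(x\right)\left(a\right)\mathcal{F}_g=\left(\left(x\right)g^{-1}\right)a$: taking $g=id_\kappa$ (so that $g^{-1}=id_\kappa$) yields $\left(x\right)\left(a\right)\mathcal{F}_{id_\kappa}=\left(x\right)a$ for every $x\in\kappa$ and every $a\in\mathbb{Z}^\kappa$, i.e., $\mathcal{F}_{id_\kappa}$ is the identity selfmap of $\mathbb{Z}^\kappa$; restricting it to $\sigma{\mathbb{N}^\kappa}$ gives $\mathcal{F}^\circ_{id_\kappa}=\mathbb{I}$, as claimed. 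Alternatively, one could identify $g_\varepsilon$ straight from Proposition~\ref{proposition-1.5}: an idempotent $\varepsilon$ is the identity map of ${\uparrow}d_\varepsilon$, hence $r_\varepsilon=d_\varepsilon$ and $\rho_\varepsilon\lambda_\varepsilon$ is already the identity of ${\uparrow}d_\varepsilon$, so $\varepsilon=\rho_\varepsilon\lambda_\varepsilon=\rho_\varepsilon\mathcal{F}^\circ_{id_\kappa}\lambda_\varepsilon$, and the uniqueness clause of Proposition~\ref{proposition-1.5} forces $g_\varepsilon=id_\kappa$.

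There is no real obstacle here: the statement is a two-line consequence of the multiplicativity of the assignment $\alpha\mapsto g_\alpha$ established in Corollary~\ref{corollary-1.8c}, together with the unpacking of the definition of $\mathcal{F}_g$. The only point worth isolating is the elementary remark that an idempotent element of a group must equal its identity, which is what turns $g_\varepsilon g_\varepsilon=g_\varepsilon$ into $g_\varepsilon=id_\kappa$.
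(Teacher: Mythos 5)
Your argument is correct and matches the paper's (implicit) derivation: the corollary is stated as an immediate consequence of Corollary~\ref{corollary-1.8c}, and deducing $g_\varepsilon=g_\varepsilon g_\varepsilon$ from $\varepsilon=\varepsilon\varepsilon$, hence $g_\varepsilon=id_\kappa$ in the group $\mathcal{S}_\kappa$, and then $\mathcal{F}^\circ_{id_\kappa}=\mathbb{I}$ from the definition of $\mathcal{F}_g$, is exactly the intended two-line argument. Your alternative via the uniqueness clause of Proposition~\ref{proposition-1.5} is also a valid route but adds nothing essential here.
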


  \begin{remark}\label{remark-1.9}
  In the bicyclic semigroup ${\mathscr{C}}\left(p,q\right)$ the semigroup operation is determined in the following way:
  \begin{equation*}
    p^iq^j\cdot p^kq^l=
  \left\{
    \begin{array}{ll}
      p^iq^{j-k+l}, & \hbox{if~} j>k;\\
      p^iq^l,       & \hbox{if~} j=k;\\
      p^{i-j+k}q^l, & \hbox{if~} j<k,
    \end{array}
  \right.
  \end{equation*}
  which is equivalent to the following formula:
  \begin{equation*}
    p^iq^j\cdot p^kq^l=p^{i+\operatorname{max}\{j,k\}-j}q^{l+\operatorname{max}\{j,k\}-k}.
  \end{equation*}
  We note that the bicyclic semigroup ${\mathscr{C}}\left(p,q\right)$ is isomorphic to the semigroup $\left(\mathbb{N}\times \mathbb{N},*\right)$ which is defined on the square $\mathbb{N}\times \mathbb{N}$ of the set of all positive integers with the following multiplication:
  \begin{equation}\label{eq-1.1}
    \left(i,j\right)*\left(k,l\right)=\left(i+\operatorname{max}\{j,k\}-j,l+\operatorname{max}\{j,k\}-k\right).
  \end{equation}
  To see this, it is sufficiently to check that the map
  \begin{equation*}
  f\colon {\mathscr{C}}\left(p,q\right) \rightarrow \mathbb{N}\times \mathbb{N} : p^iq^j \stackrel{f}{\mapsto} \left(i+1,j+1\right)
  \end{equation*}
  is an isomorphism between semigroups ${\mathscr{C}}\left(p,q\right)$ and $\left(\mathbb{N}\times \mathbb{N},*\right)$.
  \end{remark}

  In this paper we shall use the semigroup $\left(\mathbb{N}\times \mathbb{N},*\right)$ as a representation of the bicyclic semigroup ${\mathscr{C}}\left(p,q\right)$ and we shall denote the semigroup $\left(\mathbb{N}\times \mathbb{N},*\right)$ by $\mathbb{B}$.

  For any infinite cardinal $\kappa$, define the semigroup $\sigma{\mathbb{B}^\kappa}$ as the set $\sigma{\mathbb{N}^\kappa}\times \sigma{\mathbb{N}^\kappa}$ with the multiplications $*_\kappa$ which is similar to (\ref{eq-1.1}):
  \begin{equation}\label{eq-1.2}
    \left(a,b\right)*_\kappa\left(c,d\right)=\left(a+\operatorname{max}\{b,c\}-b,d+\operatorname{max}\{b,c\}-c\right), \hbox{ where } a, b, c, d\in \sigma{\mathbb{N}^\kappa}.
  \end{equation}

  We can observe that the semigroup $\sigma{\mathbb{B}^\kappa}$, as defined by the multiplication operation $*_\kappa$ in (\ref{eq-1.2}), is indeed isomorphic to the $\sigma$-product of $\kappa$ many copies of the bicyclic monoid.

  For any $g\in\mathcal{S}_\kappa$ consider a map $\Phi_g\colon \sigma{\mathbb{B}^\kappa} \to \sigma{\mathbb{B}^\kappa}$ defined in the following way:
  for any $\left(a,b\right)\in\sigma{\mathbb{B}^\kappa}$ define
  \begin{equation*}
  \left(\left(a,b\right)\right)\Phi_g = \left(\left(a\right)\mathcal{F}^\circ_g, \left(b\right)\mathcal{F}^\circ_g\right).
  \end{equation*}
  Statements $\left(i\right)$ and $\left(ii\right)$ of Lemma~\ref{lemma-1.2-3} imply that the map $\Phi_g$ is well-defined and $\Phi_g$ is a bijection.

  Check that the map $\Phi_g$ is an automorphism of $\sigma{\mathbb{B}^\kappa}$. For any $\left(a,b\right), \left(c,d\right)\in\sigma{\mathbb{B}^\kappa}$, by statements $\left(xiii\right)-\left(x\right)$ of Lemma~\ref{lemma-1.2-3}:
  \begin{equation*}
    \begin{split}
      \big(&\left(a,b\right)*_\kappa\left(c,d\right)\big)\Phi_g=\left(\left(a+\operatorname{max}\{b,c\}-b,d+\operatorname{max}\{b,c\}-c\right)\right)\Phi_g
      =\\&=\left(\left(a+\operatorname{max}\{b,c\}-b\right)\mathcal{F}^\circ_g,\left(d+\operatorname{max}\{b,c\}-c\right)\mathcal{F}^\circ_g\right)
      =\\&=\left(\left(a\right)\mathcal{F}_g+\operatorname{max}\{\left(b\right)\mathcal{F}_g,\left(c\right)\mathcal{F}_g\}-\left(b\right)\mathcal{F}_g,\left(d\right)\mathcal{F}_g+\operatorname{max}\{\left(b\right)\mathcal{F}_g,\left(c\right)\mathcal{F}_g\}-\left(c\right)\mathcal{F}_g\right)
      =\\&=\left(\left(a\right)\mathcal{F}_g,\left(b\right)\mathcal{F}_g\right)*_\kappa\left(\left(c\right)\mathcal{F}_g,\left(d\right)\mathcal{F}_g\right)=\left(\left(a\right)\mathcal{F}^\circ_g,\left(b\right)\mathcal{F}^\circ_g\right)*_\kappa\left(\left(c\right)\mathcal{F}^\circ_g,\left(d\right)\mathcal{F}^\circ_g\right)
      =\\&=\left(a,b\right)\Phi_g*_\kappa\left(c,d\right)\Phi_g.
    \end{split}
  \end{equation*}

  Let $\kappa$ be any infinite cardinal and $\operatorname{\mathbf{Aut}}(\sigma{\mathbb{B}^\kappa})$ be the group of automorphisms of the semigroup $\sigma{\mathbb{B}^\kappa}$.
  Consider the map $\Phi \colon \mathcal{S}_\kappa \to \operatorname{\mathbf{Aut}}\left({\sigma{\mathbb{B}^\kappa}}\right)$
  for any $g\in\mathcal{S}_\kappa$ define $\left(g\right)\Phi=\Phi_g$.
  Statement $\left(vii\right)$ of Lemma~\ref{lemma-1.2-3} implies that $\Phi$ is injective. Next, we show that the map $\Phi$ is a homomorphism.
  % from $\mathcal{S}_\kappa$ into the group $\operatorname{\mathbf{Aut}}\left({\sigma{\mathbb{B}^\kappa}}\right)$ of automorphisms of the semigroup $\sigma{\mathbb{B}^\kappa}$:
  For any $g,h\in\mathcal{S}_\kappa$ consider the image of their composition: for any $\left[a,b\right]\in\sigma{\mathbb{B}^\kappa}$
  \begin{equation*}
    \begin{split}
      \left(\left[a,b\right]\right)\left(gh\right)\Phi&=\left(\left[a,b\right]\right)\Phi_{gh}=\\
      &=\left[\left(a\right)\mathcal{F}^\circ_{gh}, \left(b\right)\mathcal{F}^\circ_{gh}\right].
    \end{split}
  \end{equation*}
  Statement $\left(iv\right)$ of Lemma~\ref{lemma-1.2-3} implies that
  \begin{equation*}
    \left[\left(a\right)\mathcal{F}^\circ_{gh}, \left(b\right)\mathcal{F}^\circ_{gh}\right] = \left[\left(a\right)\mathcal{F}^\circ_g\mathcal{F}^\circ_h, \left(b\right)\mathcal{F}^\circ_g\mathcal{F}^\circ_h\right],
  \end{equation*}
  and since
  \begin{align*}
    \left[\left(a\right)\mathcal{F}^\circ_g\mathcal{F}^\circ_h, \left(b\right)\mathcal{F}^\circ_g\mathcal{F}^\circ_h\right]&=\left(\left[\left(a\right)\mathcal{F}^\circ_g, \left(b\right)\mathcal{F}^\circ_g\right]\right)\Phi_h= \\
      & =\left(\left[a, b\right]\right)\Phi_g\Phi_h =\\
      & =\left(\left[a, b\right]\right)\left(g\right)\Phi\left(h\right)\Phi,
  \end{align*}
  we have that
  \begin{equation*}
    \left(\left[a,b\right]\right)\left(gh\right)\Phi=\left(\left[a, b\right]\right)\left(g\right)\Phi\left(h\right)\Phi,
  \end{equation*}
  i.e., $\Phi$ is a homomorphism.

  For any infinite cardinal $\kappa$ consider the semidirect product $\mathcal{S}_\kappa\ltimes_{\Phi}\sigma{\mathbb{B}^\kappa}$ of the semigroup $\sigma{\mathbb{B}^\kappa}$ by the group  $\mathcal{S}_\kappa$ as the set $\mathcal{S}_\kappa\times \sigma{\mathbb{B}^\kappa}$ with the operation:
  \begin{equation*}
    \left(g, \left[a,b\right]\right)\left(h, \left[c,d\right]\right)=\left(gh, \left(\left[a,b\right]\right)\Phi_h*_\kappa\left[c,d\right]\right)  \quad \hbox{ for } \left(g, \left[a,b\right]\right),\left(h, \left[c,d\right]\right) \in \mathcal{S}_\kappa\times \sigma{\mathbb{B}^\kappa}.
    \end{equation*}
    Define the map $\Psi\colon{} \mathcal{IP\!F}\left(\sigma{\mathbb{N}^\kappa}\right)\to\mathcal{S}_\kappa\ltimes_{\Phi}\sigma{\mathbb{B}^\kappa}$ by the formula:
    \begin{equation*}
      \left(\alpha\right)\Psi=\left(g_\alpha,\left[\left(d_\alpha\right)\mathcal{F}^\circ_{g_\alpha}, r_\alpha\right]\right).
    \end{equation*}
    The definition of $d_\alpha, r_\alpha, g_\alpha$ and $\mathcal{F}^\circ_{g_\alpha}$ implies that the map $\Psi$ is well-defined.

  \begin{theorem}\label{theorem-1.10}
    For any infinite cardinal $\kappa$ the semigroup $\mathcal{IP\!F}\left(\sigma{\mathbb{N}^\kappa}\right)$ is isomorphic to the semidirect product $\mathcal{S}_\kappa\ltimes_{\Phi}
    \sigma{\mathbb{B}^\kappa}$ of the semigroup $\sigma{\mathbb{B}^\kappa}$ by the group  $\mathcal{S}_\kappa$.
    \end{theorem}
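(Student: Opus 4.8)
The plan is to prove that the map $\Psi\colon\mathcal{IP\!F}\left(\sigma{\mathbb{N}^\kappa}\right)\to\mathcal{S}_\kappa\ltimes_{\Phi}\sigma{\mathbb{B}^\kappa}$, $\left(\alpha\right)\Psi=\left(g_\alpha,\left[\left(d_\alpha\right)\mathcal{F}^\circ_{g_\alpha},r_\alpha\right]\right)$, introduced just before the statement is an isomorphism. Well-definedness has already been recorded, so it remains to verify that $\Psi$ is a homomorphism, that it is injective, and that it is surjective.

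\emph{Homomorphism.} Fix $\alpha,\beta\in\mathcal{IP\!F}\left(\sigma{\mathbb{N}^\kappa}\right)$. On one side, Lemma~\ref{lemma-1.8} and Corollary~\ref{corollary-1.8c} give
\begin{equation*}
\left(\alpha\beta\right)\Psi=\left(g_\alpha g_\beta,\ \left[\left(\left(\operatorname{max}\{r_\alpha,d_\beta\}-r_\alpha\right)\mathcal{F}_{g_\alpha}^{-1}+d_\alpha\right)\mathcal{F}^\circ_{g_\alpha}\mathcal{F}^\circ_{g_\beta},\ \left(\operatorname{max}\{r_\alpha,d_\beta\}-d_\beta\right)\mathcal{F}_{g_\beta}+r_\beta\right]\right).
\end{equation*}
On the other side, unwinding the multiplication of $\mathcal{S}_\kappa\ltimes_{\Phi}\sigma{\mathbb{B}^\kappa}$ and the definition of $\Phi_{g_\beta}$ shows that $\left(\alpha\right)\Psi\left(\beta\right)\Psi$ has first coordinate $g_\alpha g_\beta$ and second coordinate
\begin{equation*}
\left[\left(d_\alpha\right)\mathcal{F}^\circ_{g_\alpha}\mathcal{F}^\circ_{g_\beta},\ \left(r_\alpha\right)\mathcal{F}^\circ_{g_\beta}\right]*_\kappa\left[\left(d_\beta\right)\mathcal{F}^\circ_{g_\beta},\ r_\beta\right].
\end{equation*}
To reconcile the second coordinates I would first apply statement $(x)$ of Lemma~\ref{lemma-1.2-3} in the form $\operatorname{max}\{\left(r_\alpha\right)\mathcal{F}^\circ_{g_\beta},\left(d_\beta\right)\mathcal{F}^\circ_{g_\beta}\}=\left(\operatorname{max}\{r_\alpha,d_\beta\}\right)\mathcal{F}^\circ_{g_\beta}$, then expand $*_\kappa$ according to (\ref{eq-1.2}), and finally pull $\mathcal{F}^\circ_{g_\beta}$ out of each entry using additivity (Lemma~\ref{lemma-1.2-3}$(viii)$--$(ix)$). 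The first entry becomes $\left(\left(d_\alpha\right)\mathcal{F}^\circ_{g_\alpha}+\operatorname{max}\{r_\alpha,d_\beta\}-r_\alpha\right)\mathcal{F}^\circ_{g_\beta}$, which equals the first entry above once one cancels $\mathcal{F}_{g_\alpha}^{-1}\mathcal{F}_{g_\alpha}=\operatorname{id}$ inside (Lemma~\ref{lemma-1.2-3}$(i)$,$(iv)$); the second entry equals $\left(\operatorname{max}\{r_\alpha,d_\beta\}-d_\beta\right)\mathcal{F}_{g_\beta}+r_\beta$ directly. Hence $\left(\alpha\beta\right)\Psi=\left(\alpha\right)\Psi\left(\beta\right)\Psi$.

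\emph{Injectivity and surjectivity.} If $\left(\alpha\right)\Psi=\left(\beta\right)\Psi$, then $g_\alpha=g_\beta$, $r_\alpha=r_\beta$ and $\left(d_\alpha\right)\mathcal{F}^\circ_{g_\alpha}=\left(d_\beta\right)\mathcal{F}^\circ_{g_\beta}$; since $\mathcal{F}^\circ_{g_\alpha}=\mathcal{F}^\circ_{g_\beta}$ is a bijection this forces $d_\alpha=d_\beta$, hence $\rho_\alpha=\rho_\beta$ and $\lambda_\alpha=\lambda_\beta$, and the uniqueness of the decomposition $\alpha=\rho_\alpha\mathcal{F}^\circ_{g_\alpha}\lambda_\alpha$ from Corollary~\ref{corollary-1.6} gives $\alpha=\beta$. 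For surjectivity, take $\left(g,\left[a,b\right]\right)\in\mathcal{S}_\kappa\ltimes_{\Phi}\sigma{\mathbb{B}^\kappa}$, put $d:=\left(a\right)\mathcal{F}^\circ_{g^{-1}}\in\sigma{\mathbb{N}^\kappa}$ (which makes sense by Lemma~\ref{lemma-1.2-3}$(i)$,$(ii)$), let $\varepsilon,\iota$ be the idempotents of $\mathcal{IP\!F}\left(\sigma{\mathbb{N}^\kappa}\right)$ with $\operatorname{dom}\varepsilon={\uparrow}d$ and $\operatorname{dom}\iota={\uparrow}b$, and set $\alpha:=\rho_\varepsilon\mathcal{F}^\circ_g\lambda_\iota$. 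Then $\alpha$ is an order isomorphism ${\uparrow}d\to{\uparrow}b$, so $\alpha\in\mathcal{IP\!F}\left(\sigma{\mathbb{N}^\kappa}\right)$ with $d_\alpha=d$, $r_\alpha=b$, $\rho_\alpha=\rho_\varepsilon$ and $\lambda_\alpha=\lambda_\iota$, whence $\rho_\alpha^{-1}\alpha\lambda_\alpha^{-1}=\mathcal{F}^\circ_g$ and $g_\alpha=g$ by Corollary~\ref{corollary-1.6}. Consequently $\left(\alpha\right)\Psi=\left(g,\left[\left(d\right)\mathcal{F}^\circ_g,b\right]\right)=\left(g,\left[a,b\right]\right)$, so $\Psi$ is onto.

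The only genuinely computational ingredient is the homomorphism identity, and even there Lemma~\ref{lemma-1.8} and Corollary~\ref{corollary-1.8c} already supply $d_{\alpha\beta}$, $r_{\alpha\beta}$ and $g_{\alpha\beta}$; what remains is careful bookkeeping of which of the order automorphisms $\mathcal{F}_{g_\alpha}$, $\mathcal{F}_{g_\beta}$ is applied where, which I expect to be the main obstacle, though it is essentially routine.
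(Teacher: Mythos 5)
Your proposal is correct and follows essentially the same route as the paper: the paper also verifies that the map $\Psi$ is a homomorphism via Lemma~\ref{lemma-1.8}, Corollary~\ref{corollary-1.8c} and the additivity/$\operatorname{max}$-compatibility statements of Lemma~\ref{lemma-1.2-3}, and it dispatches bijectivity by citing the unique decomposition of Corollary~\ref{corollary-1.6}. Your only addition is to spell out the injectivity and surjectivity arguments that the paper leaves implicit in that citation, and these are carried out correctly.
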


    \begin{proof}[\textsl{Proof}]
    Consider the map $\Psi$. Corollary~\ref{corollary-1.6} implies that $\Psi$ is a bijection. We shall prove that $\Psi$ is also a homomorphism.

    For any $\alpha, \beta \in \mathcal{IP\!F}\left(\sigma{\mathbb{N}^\kappa}\right)$ we have that
    $\left(\alpha\beta\right)\Psi = \left(g_{\alpha\beta}, \left[\left(d_{\alpha\beta}\right)\mathcal{F}^\circ_{g_{\alpha\beta}}, r_{\alpha\beta}\right]\right).$
    Co\-rol\-lary~\ref{corollary-1.8c} and Lemma~\ref{lemma-1.8} imply that
    \begin{align*}
      &\left(g_{\alpha\beta}, \left[\left(d_{\alpha\beta}\right)\mathcal{F}^\circ_{g_{\alpha\beta}}, r_{\alpha\beta}\right]\right)= \\
       &=
    \left(g_\alpha g_\beta,
    \left[
      \left(\left(\operatorname{max}\{r_{\alpha},d_{\beta}\}-r_{\alpha}\right)\mathcal{F}_{g_\alpha}^{-1}+d_{\alpha}\right)\mathcal{F}^\circ_{g_\alpha}\mathcal{F}^\circ_{g_\beta},
      \left(\operatorname{max}\{r_{\alpha},d_{\beta}\}-d_{\beta}\right)\mathcal{F}_{g_\beta}+r_{\beta}
    \right]\right).
    \end{align*}
    Lemma~\ref{lemma-1.2-3}, the definition of the operation $*_\kappa$, and the definition of the map $\Phi$ imply that
    \begin{equation*}
      \begin{split}
        &\left(g_\alpha g_\beta,
        \left[
          \left(\left(\operatorname{max}\{r_{\alpha},
          d_{\beta}\}-r_{\alpha}\right)\mathcal{F}_{g_\alpha}^{-1}+d_{\alpha}\right)\mathcal{F}^\circ_{g_\alpha}\mathcal{F}^\circ_{g_\beta},
          \left(\operatorname{max}\{r_{\alpha},
            d_{\beta}\}-d_{\beta}\right)\mathcal{F}_{g_\beta}+r_{\beta}
        \right]\right)=
        \\ &=\big(g_\alpha g_\beta,
        \big[
         \operatorname{max}\{\left(r_{\alpha}\right)\mathcal{F}_{g_\beta},
         \left(d_{\beta}\right)\mathcal{F}_{g_\beta}\}-\left(r_{\alpha}\right)\mathcal{F}_{g_\beta}+
         \left(d_{\alpha}\right)\mathcal{F}_{g_\alpha}\mathcal{F}_{g_\beta},
          \operatorname{max}\{\left(r_{\alpha}\right)\mathcal{F}_{g_\beta}, \\
          & \qquad \qquad \left(d_{\beta}\right)\mathcal{F}_{g_\beta}\}-\left(d_{\beta}\right)\mathcal{F}_{g_\beta}+r_{\beta}
        \big]\big)=
        \\&=\left(g_\alpha g_\beta,
        \left[\left(d_{\alpha}\right)\mathcal{F}_{g_\alpha}\mathcal{F}_{g_\beta}, \left(r_{\alpha}\right)\mathcal{F}_{g_\beta}\right]
        *_\kappa
        \left[\left(d_{\beta}\right)\mathcal{F}_{g_\beta}, r_\beta\right]
        \right)=\\
        &=\left(g_\alpha g_\beta,
        \left[\left(d_{\alpha}\right)\mathcal{F}^\circ_{g_\alpha}\mathcal{F}^\circ_{g_\beta}, \left(r_{\alpha}\right)\mathcal{F}^\circ_{g_\beta}\right]
        *_\kappa
        \left[\left(d_{\beta}\right)\mathcal{F}^\circ_{g_\beta}, r_\beta\right]
        \right)=
        \\&=\left(g_\alpha g_\beta,
        \left(\left[\left(d_{\alpha}\right)\mathcal{F}^\circ_{g_\alpha}, r_{\alpha}\right]\right)\Phi_{g_\beta}
        *_\kappa
        \left[\left(d_{\beta}\right)\mathcal{F}^\circ_{g_\beta}, r_\beta\right]
        \right)=\\
        &=
        \left(g_\alpha, \left[\left(d_{\alpha}\right)\mathcal{F}^\circ_{g_\alpha}, r_\alpha\right]\right)\left(g_\beta, \left[\left(d_{\beta}\right)\mathcal{F}^\circ_{g_\beta}, r_\beta\right]\right)\\
        &=\left(\alpha\right)\Psi\left(\beta\right)\Psi.
      \end{split}
    \end{equation*}
    \end{proof}

    For any $\alpha\in\mathcal{IP\!F}\left(\sigma{\mathbb{N}^\kappa}\right)$, let $\left(g_\alpha,\left[\left(d_\alpha\right)\mathcal{F}^\circ_{g_\alpha}, r_\alpha \right]\right)= \left(\alpha\right)\Psi$ be the image of the element $\alpha$ by the isomorphism $\Psi\colon \mathcal{IP\!F}\left(\sigma{\mathbb{N}^\kappa}\right) \to \mathcal{S}_\kappa\ltimes_{\Phi}\sigma{\mathbb{B}^\kappa}$ which is defined above the proof of Theorem~\ref{theorem-1.10}.

    Every inverse semigroup $S$ admits the \emph{least group} congruence $\mathfrak{C}_{\mathbf{mg}}$ (see \cite[Section~III]{Petrich-1984}):
    \begin{equation*}
        s\mathfrak{C}_{\mathbf{mg}} t \quad \hbox{if and only if \quad there exists an idempotent} \quad e\in S \quad \hbox{such that} \quad se=te.
    \end{equation*}

    \begin{proposition}\label{proposition-1.11}
      For any infinite cardinal $\kappa$, any element $\alpha \in \mathcal{IP\!F}\left(\sigma{\mathbb{N}^\kappa}\right)$ and for any idempotent $\varepsilon \in \mathcal{IP\!F}\left(\sigma{\mathbb{N}^\kappa}\right)$ we have:
      \begin{equation*}
        \begin{split}
        \left(\alpha\varepsilon\right)\Psi&=
        \left(g_\alpha,\left[\left(d_\alpha\right)\mathcal{F}^\circ_{g_\alpha}, r_\alpha \right]\right) \left(id_\kappa, \left[d_\varepsilon, d_\varepsilon\right]\right)=\\
        &=\left(g_\alpha,\left[\operatorname{max}\{r_\alpha,d_\varepsilon\}-r_\alpha+\left(d_\alpha\right)\mathcal{F}^\circ_{g_\alpha}, \operatorname{max}\{r_\alpha,d_\varepsilon\} \right]\right);\\
        \left(\varepsilon\alpha\right)\Psi&=
        \left(id_\kappa, \left[d_\varepsilon, d_\varepsilon\right]\right) \left(g_\alpha,\left[\left(d_\alpha\right)\mathcal{F}^\circ_{g_\alpha}, r_\alpha \right]\right) =\\
        &=\left(g_\alpha,\left[\left(\operatorname{max}\{d_\varepsilon, d_\alpha\}\right)\mathcal{F}^\circ_{g_\alpha}, \left(\operatorname{max}\{d_\varepsilon, d_\alpha\}\right)\mathcal{F}^\circ_{g_\alpha}-\left(d_\alpha\right)\mathcal{F}^\circ_{g_\alpha}+r_\alpha \right]\right).
      \end{split}
      \end{equation*}
      \end{proposition}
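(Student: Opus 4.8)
The plan is to reduce everything to the fact, established in Theorem~\ref{theorem-1.10}, that $\Psi$ is a homomorphism, so that $(\alpha\varepsilon)\Psi = (\alpha)\Psi\,(\varepsilon)\Psi$ and $(\varepsilon\alpha)\Psi = (\varepsilon)\Psi\,(\alpha)\Psi$, and then to unwind the two products using the definition of the multiplication in $\mathcal{S}_\kappa\ltimes_{\Phi}\sigma{\mathbb{B}^\kappa}$. The first step is to pin down $(\varepsilon)\Psi$: by Corollary~\ref{corollary-1.8d} we have $g_\varepsilon = id_\kappa$ and $\mathcal{F}^\circ_{g_\varepsilon} = \mathbb{I}$, and since $\operatorname{dom}\varepsilon = \operatorname{ran}\varepsilon$ we also have $d_\varepsilon = r_\varepsilon$; hence $(\varepsilon)\Psi = (id_\kappa, [d_\varepsilon, d_\varepsilon])$. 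I would also record once and for all that $\Phi_{id_\kappa}$ is the identity automorphism of $\sigma{\mathbb{B}^\kappa}$, because $\Phi_{id_\kappa}$ acts coordinatewise via $\mathcal{F}^\circ_{id_\kappa} = \mathbb{I}$.

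For $(\alpha\varepsilon)\Psi$ I would compute $(g_\alpha, [(d_\alpha)\mathcal{F}^\circ_{g_\alpha}, r_\alpha])\,(id_\kappa, [d_\varepsilon, d_\varepsilon])$. The first coordinate is $g_\alpha\, id_\kappa = g_\alpha$; for the second coordinate the action $\Phi_{id_\kappa}$ is trivial, so it is $[(d_\alpha)\mathcal{F}^\circ_{g_\alpha}, r_\alpha] *_\kappa [d_\varepsilon, d_\varepsilon]$, and expanding $*_\kappa$ according to (\ref{eq-1.2}) with $a = (d_\alpha)\mathcal{F}^\circ_{g_\alpha}$, $b = r_\alpha$, $c = d = d_\varepsilon$ gives $[\operatorname{max}\{r_\alpha, d_\varepsilon\} - r_\alpha + (d_\alpha)\mathcal{F}^\circ_{g_\alpha}, \operatorname{max}\{r_\alpha, d_\varepsilon\}]$, which is the asserted formula. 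For $(\varepsilon\alpha)\Psi$ I would compute $(id_\kappa, [d_\varepsilon, d_\varepsilon])\,(g_\alpha, [(d_\alpha)\mathcal{F}^\circ_{g_\alpha}, r_\alpha])$; the first coordinate is $g_\alpha$, and the second coordinate is $([d_\varepsilon, d_\varepsilon])\Phi_{g_\alpha} *_\kappa [(d_\alpha)\mathcal{F}^\circ_{g_\alpha}, r_\alpha] = [(d_\varepsilon)\mathcal{F}^\circ_{g_\alpha}, (d_\varepsilon)\mathcal{F}^\circ_{g_\alpha}] *_\kappa [(d_\alpha)\mathcal{F}^\circ_{g_\alpha}, r_\alpha]$ by the definition of $\Phi_{g_\alpha}$. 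Expanding $*_\kappa$ with $a = b = (d_\varepsilon)\mathcal{F}^\circ_{g_\alpha}$, $c = (d_\alpha)\mathcal{F}^\circ_{g_\alpha}$, $d = r_\alpha$, and then rewriting $\operatorname{max}\{(d_\varepsilon)\mathcal{F}^\circ_{g_\alpha}, (d_\alpha)\mathcal{F}^\circ_{g_\alpha}\} = (\operatorname{max}\{d_\varepsilon, d_\alpha\})\mathcal{F}^\circ_{g_\alpha}$ by statement $(x)$ of Lemma~\ref{lemma-1.2-3} (valid for $\mathcal{F}^\circ_{g_\alpha}$ by Lemma~\ref{lemma-1.2-3-next}), yields precisely the stated expression.

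There is no genuine difficulty here; the statement is a direct unwinding of the semidirect-product multiplication. The only points that need attention are applying the action $\Phi_h$ to the correct (left) factor in the product $(g, [a,b])(h, [c,d]) = (gh, ([a,b])\Phi_h *_\kappa [c,d])$, using that $\Phi_{id_\kappa}$ is the identity in the computation of $(\alpha\varepsilon)\Psi$, and invoking the distributivity of $\mathcal{F}^\circ_{g_\alpha}$ over $\operatorname{max}$ in the computation of $(\varepsilon\alpha)\Psi$.
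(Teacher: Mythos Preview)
Your proof is correct and follows essentially the same approach as the paper's own proof: identify $(\varepsilon)\Psi = (id_\kappa,[d_\varepsilon,d_\varepsilon])$ via Corollary~\ref{corollary-1.8d} and $d_\varepsilon=r_\varepsilon$, then reduce everything to the multiplication in $\mathcal{S}_\kappa\ltimes_{\Phi}\sigma{\mathbb{B}^\kappa}$. The paper's version is terser---it simply says that ``the definition of the multiplication in $\mathcal{S}_\kappa\ltimes_{\Phi}\sigma{\mathbb{B}^\kappa}$ completes the proof''---whereas you carry out the two $*_\kappa$-computations explicitly and note the use of Lemma~\ref{lemma-1.2-3}$(x)$ for pulling $\mathcal{F}^\circ_{g_\alpha}$ through $\operatorname{max}$ in the $(\varepsilon\alpha)$ case; both arguments are the same in substance.
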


      \begin{proof}[\textsl{Proof}]
        By Corollary~\ref{corollary-1.8d}, $g_\varepsilon$ is the identity permutation, i.e., $g_\varepsilon=id_\kappa$ and $\mathcal{F}^\circ_{g_\varepsilon}=\mathbb{I}$. Since $\operatorname{dom}\varepsilon=\operatorname{ran}\varepsilon$ we have that $d_\varepsilon = r_\varepsilon$ and then $\left(d_\varepsilon\right)\mathcal{F}^\circ_{g_\varepsilon}=d_\varepsilon = r_\varepsilon$, so
        \begin{equation*}
        \left(g_\varepsilon,\left[\left(d_\varepsilon\right)\mathcal{F}^\circ_{g_\varepsilon}, r_\varepsilon\right]\right) = \left(id_\kappa, \left[d_\varepsilon, d_\varepsilon\right]\right).
        \end{equation*}
        Then the definition of the multiplication in $\mathcal{S}_\kappa\ltimes_{\Phi}\sigma{\mathbb{B}^\kappa}$ completes the proof of the proposition.
      \end{proof}

    The following theorem describes the least group congruence on the semigroup $\mathcal{IP\!F}\left(\sigma{\mathbb{N}^\kappa}\right)$.

    \begin{theorem}\label{theorem-1.11}
    Let $\kappa$ be any infinite cardinal. Then $\alpha\mathfrak{C}_{\mathbf{mg}} \beta$ in the semigroup $\mathcal{IP\!F}\left(\sigma{\mathbb{N}^\kappa}\right)$ if and only if
    \begin{equation*}
      g_\alpha=g_\beta \qquad \hbox{and} \qquad \left(d_\alpha\right)\mathcal{F}^\circ_{g_\alpha}-r_\alpha=\left(d_\beta\right)\mathcal{F}^\circ_{g_\beta}-r_\beta.
    \end{equation*}
    \end{theorem}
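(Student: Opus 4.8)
The plan is to work through the isomorphism $\Psi$ and the formula for $\mathfrak{C}_{\mathbf{mg}}$ given just before the statement, reducing everything to a computation inside the semidirect product $\mathcal{S}_\kappa\ltimes_\Phi\sigma{\mathbb{B}^\kappa}$. Recall that $\alpha\mathfrak{C}_{\mathbf{mg}}\beta$ means there is an idempotent $\varepsilon$ with $\alpha\varepsilon=\beta\varepsilon$; since every idempotent of $\mathcal{IP\!F}\left(\sigma{\mathbb{N}^\kappa}\right)$ has the form described in Proposition~\ref{proposition-2.1}$(ii)$, this is equivalent (using that for any two idempotents there is a common lower bound, namely the idempotent with domain ${\uparrow}\operatorname{max}\{d_{\varepsilon_1},d_{\varepsilon_2}\}$) to: there exists $\varepsilon\in E\left(\mathcal{IP\!F}\left(\sigma{\mathbb{N}^\kappa}\right)\right)$ with $(\alpha\varepsilon)\Psi=(\beta\varepsilon)\Psi$. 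So the whole statement becomes a statement about when $(\alpha\varepsilon)\Psi$ and $(\beta\varepsilon)\Psi$ can be made equal by choosing the idempotent $\varepsilon$ suitably.

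The computation of $(\alpha\varepsilon)\Psi$ is exactly Proposition~\ref{proposition-1.11}: it equals
\begin{equation*}
  \left(g_\alpha,\left[\operatorname{max}\{r_\alpha,d_\varepsilon\}-r_\alpha+\left(d_\alpha\right)\mathcal{F}^\circ_{g_\alpha},\ \operatorname{max}\{r_\alpha,d_\varepsilon\}\right]\right).
\end{equation*}
First I would observe that equality of the first coordinates forces $g_\alpha=g_\beta$; this gives one half of the stated condition immediately, and it is independent of $\varepsilon$. Then, writing $c=d_\varepsilon$, the remaining requirement is that for a suitable $c\in\sigma{\mathbb{N}^\kappa}$ we have both
\begin{equation*}
  \operatorname{max}\{r_\alpha,c\}-r_\alpha+\left(d_\alpha\right)\mathcal{F}^\circ_{g_\alpha}
  =\operatorname{max}\{r_\beta,c\}-r_\beta+\left(d_\beta\right)\mathcal{F}^\circ_{g_\beta}
  \qquad\text{and}\qquad
  \operatorname{max}\{r_\alpha,c\}=\operatorname{max}\{r_\beta,c\}.
\end{equation*}
For the ``if'' direction I would simply take $c=\operatorname{max}\{r_\alpha,r_\beta\}$, so that $\operatorname{max}\{r_\alpha,c\}=c=\operatorname{max}\{r_\beta,c\}$; then the second equation holds trivially, and the first reduces to $\left(d_\alpha\right)\mathcal{F}^\circ_{g_\alpha}-r_\alpha=\left(d_\beta\right)\mathcal{F}^\circ_{g_\beta}-r_\beta$, which is the hypothesis (here one uses that in $\sigma{\mathbb{Z}^\kappa}$ one may freely add and subtract, and that $c-r_\alpha$, $c-r_\beta$ lie in $\sigma{\mathbb{N}^\kappa}$ since $r_\alpha,r_\beta\leqslant c$, so the idempotent $\varepsilon$ with $d_\varepsilon=c$ genuinely witnesses $\alpha\varepsilon=\beta\varepsilon$ after applying $\Psi^{-1}$). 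For the ``only if'' direction, from $\operatorname{max}\{r_\alpha,c\}=\operatorname{max}\{r_\beta,c\}$ and the first coordinate equation, subtract to get $\left(d_\alpha\right)\mathcal{F}^\circ_{g_\alpha}-r_\alpha=\left(d_\beta\right)\mathcal{F}^\circ_{g_\beta}-r_\beta$ directly—the common term $\operatorname{max}\{r_\alpha,c\}=\operatorname{max}\{r_\beta,c\}$ cancels—so no choice of $c$ can evade the stated relation, and combined with $g_\alpha=g_\beta$ from the first coordinate we are done.

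The only point requiring a little care—and I expect it to be the main (mild) obstacle—is the reduction of ``$\alpha\mathfrak{C}_{\mathbf{mg}}\beta$'' to ``$(\alpha\varepsilon)\Psi=(\beta\varepsilon)\Psi$ for some idempotent $\varepsilon$'', namely checking that the arithmetic in $\sigma{\mathbb{Z}^\kappa}$ versus $\sigma{\mathbb{N}^\kappa}$ is consistent: one must verify that the element $c=\operatorname{max}\{r_\alpha,r_\beta\}$ indeed yields a legitimate idempotent, that $\Psi$ being an isomorphism (Theorem~\ref{theorem-1.10}) transports the congruence faithfully, and that the subtraction identities used are valid because the relevant differences stay in $\sigma{\mathbb{N}^\kappa}$ (e.g.\ $\operatorname{max}\{r_\alpha,c\}-r_\alpha\in\sigma{\mathbb{N}^\kappa}$ since $r_\alpha\leqslant\operatorname{max}\{r_\alpha,c\}$). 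Once these bookkeeping facts are in place, the equivalence follows by the two short displayed manipulations above, and no further structural input about $\mathcal{IP\!F}\left(\sigma{\mathbb{N}^\kappa}\right)$ is needed beyond Proposition~\ref{proposition-1.11} and Theorem~\ref{theorem-1.10}.
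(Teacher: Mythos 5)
Your proof is correct and follows essentially the same route as the paper: compute $\left(\alpha\varepsilon\right)\Psi$ and $\left(\beta\varepsilon\right)\Psi$ via Proposition~\ref{proposition-1.11} and compare coordinates in the semidirect product $\mathcal{S}_\kappa\ltimes_{\Phi}\sigma{\mathbb{B}^\kappa}$. Your explicit choice $d_\varepsilon=\operatorname{max}\{r_\alpha,r_\beta\}$ for the ``if'' direction and the cancellation of the common term $\operatorname{max}\{r_\alpha,d_\varepsilon\}=\operatorname{max}\{r_\beta,d_\varepsilon\}$ for ``only if'' simply spell out the handling of the existential quantifier over $\varepsilon$ that the paper's one-line conclusion leaves implicit.
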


    \begin{proof}[\textsl{Proof}]
    Fix an idempotent $\varepsilon$ in $\mathcal{IP\!F}\left(\sigma{\mathbb{N}^\kappa}\right)$. By Proposition~\ref{proposition-1.11},
    \begin{equation*}
    \begin{split}
      &\left(g_\alpha,\left[\left(d_\alpha\right)\mathcal{F}^\circ_{g_\alpha}, r_\alpha \right]\right) \left(id_\kappa, \left[d_\varepsilon, d_\varepsilon\right]\right)=
        \left(g_\alpha,\left[\operatorname{max}\{r_\alpha,d_\varepsilon\}-r_\alpha+\left(d_\alpha\right)\mathcal{F}^\circ_{g_\alpha}, \operatorname{max}\{r_\alpha,d_\varepsilon\} \right]\right),\\
      &\left(g_\beta,\left[\left(d_\beta\right)\mathcal{F}^\circ_{g_\beta}, r_\beta \right]\right) \left(id_\kappa, \left[d_\varepsilon, d_\varepsilon\right]\right)=
        \left(g_\beta,\left[\operatorname{max}\{r_\beta,d_\varepsilon\}-r_\beta+\left(d_\beta\right)\mathcal{F}^\circ_{g_\beta}, \operatorname{max}\{r_\beta,d_\varepsilon\} \right]\right),
    \end{split}
    \end{equation*}
    so the equality $\alpha\varepsilon=\beta\varepsilon$ holds if and only if
    \begin{equation*}
      g_\alpha=g_\beta \qquad \hbox{and} \qquad \left(d_\alpha\right)\mathcal{F}^\circ_{g_\alpha}-r_\alpha=\left(d_\beta\right)\mathcal{F}^\circ_{g_\beta}-r_\beta.
    \end{equation*}
    \end{proof}

    For any infinite cardinal $\kappa$, by $\sigma{\mathbb{Z}^\kappa_+}$ we shall denote the group $\left(\sigma{\mathbb{Z}^\kappa}, +\right)$.
    Let $\operatorname{\mathbf{Aut}}(\sigma{\mathbb{Z}^\kappa_+})$ be the group of automorphisms of the group $\sigma{\mathbb{Z}^\kappa_+}$.
    Consider the map $\Theta \colon \mathcal{S}_\kappa \to \operatorname{\mathbf{Aut}}\left(\sigma{\mathbb{Z}^\kappa_+}\right)$:
    for any $g\in\mathcal{S}_\kappa$ define $\left(g\right)\Theta=\mathcal{F}^\diamond_g$.

    Statements $\left(i\right), \left(iii\right)$ and $\left(viii\right)$ of Lemma~\ref{lemma-1.2-3} imply that for any $g\in \mathcal{S}$ the map $\mathcal{F}^\diamond_g$ is an isomorphism of the group $\sigma{\mathbb{Z}^\kappa_+}$, so the map $\Theta$ is well-defined.
    Next, statements $\left(iv\right)$ and $\left(vii\right)$ of Lemma~\ref{lemma-1.2-3} imply that
    the map $\Theta$ is an injective homomorphism.

    Consider the semidirect product $\mathcal{S}_\kappa\ltimes_{\Theta}\left(\sigma{\mathbb{Z}^\kappa}, +\right)$ as the set $\mathcal{S}_\kappa\times \sigma{\mathbb{Z}^\kappa}$ with the operation
    \begin{equation*}
    \left(g, m\right)\left(h, n\right)=\left(gh, \left(m\right)\mathcal{F}^\diamond_h+n\right).
    \end{equation*}

    \begin{theorem}\label{theorem-1.12}
      For any infinite cardinal $\kappa$ the quotient semigroup $\mathcal{IP\!F}\left(\sigma{\mathbb{N}^\kappa}\right)/\mathfrak{C}_{\mathbf{mg}}$ is isomorphic to the semidirect product $\mathcal{S}_\kappa\ltimes_{\Theta} \left(\sigma{\mathbb{Z}^\kappa}, +\right)$ of the group $\left(\sigma{\mathbb{Z}^\kappa}, +\right)$ by the group $\mathcal{S}_\kappa$.
    \end{theorem}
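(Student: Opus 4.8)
The plan is to realise the quotient as the image of an explicit surjective homomorphism whose induced congruence is precisely $\mathfrak{C}_{\mathbf{mg}}$, built from the isomorphism $\Psi$ of Theorem~\ref{theorem-1.10} together with the description of $\mathfrak{C}_{\mathbf{mg}}$ in Theorem~\ref{theorem-1.11}. First I would introduce the map $\pi\colon \mathcal{S}_\kappa\ltimes_\Phi\sigma{\mathbb{B}^\kappa}\to\mathcal{S}_\kappa\ltimes_\Theta(\sigma{\mathbb{Z}^\kappa},+)$ given by $\bigl(g,[a,b]\bigr)\pi=(g,\,a-b)$; this is well defined since $a-b\in\sigma{\mathbb{Z}^\kappa}$ whenever $a,b\in\sigma{\mathbb{N}^\kappa}$. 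The heart of the argument is to check that $\pi$ is a surjective homomorphism. Surjectivity is immediate: every $m\in\sigma{\mathbb{Z}^\kappa}$ can be written $m=q-r$ with $q,r\in\sigma{\mathbb{N}^\kappa}$ (shift the finitely many nonzero coordinates of $m$ up into $\mathbb{N}$), and then any preimage of $(g,[q,r])$ under $\Psi$ is sent by $\pi\circ$ to $(g,m)$ after composing with $\Psi$. For the multiplicative property one unwinds the definition of $*_\kappa$ in~(\ref{eq-1.2}), of $\Phi_h$, and of the two semidirect-product operations, and observes that the two $\operatorname{max}$-terms that $*_\kappa$ produces occur with opposite signs in the difference of coordinates and therefore cancel, while statements $\left(iv\right)$, $\left(viii\right)$, $\left(ix\right)$ of Lemmas~\ref{lemma-1.2-3} and~\ref{lemma-1.2-3-next} show that $\bigl([a,b]\Phi_h\bigr)\pi$ picks up exactly the twist $(a-b)\mathcal{F}^\diamond_h$ used in $\mathcal{S}_\kappa\ltimes_\Theta(\sigma{\mathbb{Z}^\kappa},+)$. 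This is the coordinatewise analogue of the classical fact that $(i,j)\mapsto i-j$ realises the least group congruence on the bicyclic monoid $\mathbb{B}$.

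Next I would set $\Xi=\Psi\pi\colon\mathcal{IP\!F}(\sigma{\mathbb{N}^\kappa})\to\mathcal{S}_\kappa\ltimes_\Theta(\sigma{\mathbb{Z}^\kappa},+)$, so that explicitly $(\alpha)\Xi=\bigl(g_\alpha,\,(d_\alpha)\mathcal{F}^\circ_{g_\alpha}-r_\alpha\bigr)$, which indeed lies in $\mathcal{S}_\kappa\times\sigma{\mathbb{Z}^\kappa}$ by statement $\left(ii\right)$ of Lemma~\ref{lemma-1.2-3}. Since $\Psi$ is an isomorphism (Theorem~\ref{theorem-1.10}) and $\pi$ is a surjective homomorphism, $\Xi$ is a surjective homomorphism. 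It then remains to identify the congruence $\{(\alpha,\beta):(\alpha)\Xi=(\beta)\Xi\}$ induced by $\Xi$: by construction $(\alpha)\Xi=(\beta)\Xi$ holds if and only if $g_\alpha=g_\beta$ and $(d_\alpha)\mathcal{F}^\circ_{g_\alpha}-r_\alpha=(d_\beta)\mathcal{F}^\circ_{g_\beta}-r_\beta$, and Theorem~\ref{theorem-1.11} says this is exactly $\alpha\mathfrak{C}_{\mathbf{mg}}\beta$. Hence the congruence induced by $\Xi$ equals $\mathfrak{C}_{\mathbf{mg}}$, and the homomorphism theorem for semigroups (see~\cite{Clifford-Preston-1961-1967}) gives $\mathcal{IP\!F}(\sigma{\mathbb{N}^\kappa})/\mathfrak{C}_{\mathbf{mg}}\cong\mathcal{S}_\kappa\ltimes_\Theta(\sigma{\mathbb{Z}^\kappa},+)$.

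The only step requiring real computation — and the main obstacle — is verifying that $\pi$ is multiplicative: one must see both that the $\operatorname{max}$-terms coming from the bicyclic-type product $*_\kappa$ disappear after subtracting the two coordinates, and that the residual twisting of the first argument is precisely the automorphism $\mathcal{F}^\diamond_h$ appearing in the definition of $\ltimes_\Theta$. Everything else (well-definedness of $\Xi$, surjectivity, and the identification of the induced congruence with $\mathfrak{C}_{\mathbf{mg}}$) is formal, being read off directly from Theorems~\ref{theorem-1.10} and~\ref{theorem-1.11} and from Lemmas~\ref{lemma-1.2-3} and~\ref{lemma-1.2-3-next}.
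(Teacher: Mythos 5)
Your proposal is correct and is essentially the paper's own argument: the composite $\Xi=\Psi\pi$ is exactly the map $\Upsilon\colon\alpha\mapsto\left(g_\alpha,\left(d_\alpha\right)\mathcal{F}^\circ_{g_\alpha}-r_\alpha\right)$ used in the paper, and your surjectivity step and the identification of the induced congruence with $\mathfrak{C}_{\mathbf{mg}}$ via Theorem~\ref{theorem-1.11} coincide with the paper's. The only cosmetic difference is that you check multiplicativity for the projection $\pi$ on $\mathcal{S}_\kappa\ltimes_{\Phi}\sigma{\mathbb{B}^\kappa}$ and then invoke Theorem~\ref{theorem-1.10}, whereas the paper performs the same cancellation of the $\operatorname{max}$-terms directly on $\mathcal{IP\!F}\left(\sigma{\mathbb{N}^\kappa}\right)$ using Lemma~\ref{lemma-1.8} and statements $\left(viii\right)$--$\left(ix\right)$ of Lemma~\ref{lemma-1.2-3}.
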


    \begin{proof}[\textsl{Proof}]
      Define the map $\Upsilon\colon\mathcal{IP\!F}\left(\sigma{\mathbb{N}^\kappa}\right)\to \mathscr{S}_\kappa\ltimes_{\Theta}\left(\sigma{\mathbb{Z}^\kappa}, +\right)$ in the following way: for any $\alpha \in \mathcal{IP\!F}\left(\sigma{\mathbb{N}^\kappa}\right)$ we put
      \begin{equation*}
      \left(\alpha\right)\Upsilon=\left(g_\alpha, \left(d_\alpha\right)\mathcal{F}^\circ_{g_\alpha} - r_\alpha\right).
      \end{equation*}
      Since $a-b\in \sigma{\mathbb{Z}^\kappa}$ for any $a, b \in \sigma{\mathbb{N}^\kappa}$ we have that $\Upsilon$ is well-defined.

      For any $\alpha,\beta \in \mathcal{IP\!F}\left(\sigma{\mathbb{N}^\kappa}\right)$ by the definition of $\Upsilon$ we have that
    \begin{equation*}
        \left(\alpha\beta\right)\Upsilon = \left(g_{\alpha\beta}, \left(d_{\alpha\beta}\right)\mathcal{F}^\circ_{g_{\alpha\beta}} - r_{\alpha\beta} \right),
    \end{equation*}
    and by Lemma~\ref{lemma-1.8}
    \begin{equation*}
      \left(\alpha\beta\right)\Upsilon
      {=}\left(g_\alpha g_\beta, \left(\left(\operatorname{max}\{ r_\alpha, d_\beta \}{-}r_\alpha\right) \mathcal{F}_{g_\alpha}^{-1} + d_\alpha\right) \mathcal{F}^\circ_{g_\alpha} \mathcal{F}^\circ_{g_\beta} {-} \left(\operatorname{max}\{ r_\alpha, d_\beta \} {-}d_\beta\right) \mathcal{F}_{g_\beta} {-} r_\beta \right),
    \end{equation*}
    then, by statements $\left(viii\right)$ and $\left(ix\right)$ of Lemma~\ref{lemma-1.2-3}
    \begin{equation*}
      \begin{split}
      \left(\alpha\beta\right)\Upsilon
      =&\big(g_\alpha g_\beta, \left(\operatorname{max}\{ r_\alpha, d_\beta \}\right)\mathcal{F}_{g_\beta} - \left(r_\alpha\right)\mathcal{F}_{g_\beta} + \left(d_\alpha\right)\mathcal{F}_{g_\alpha}\mathcal{F}_{g_\beta} - \left(\operatorname{max}\{ r_\alpha, d_\beta \}\right)\mathcal{F}_{g_\beta} +\\
       &\qquad \qquad + \left(d_\beta\right)\mathcal{F}_{g_\beta}  - r_\beta \big)=\\
      =&\left(g_\alpha g_\beta, \left(d_\alpha\right)\mathcal{F}_{g_\alpha}\mathcal{F}_{g_\beta} - \left(r_\alpha\right)\mathcal{F}_{g_\beta} + \left(d_\beta\right)\mathcal{F}_{g_\beta} - r_\beta \right)=\\
      =&\left(g_\alpha, \left(d_\alpha\right)\mathcal{F}^\circ_{g_\alpha} - r_\alpha\right) \left(g_\beta, \left(d_\beta\right)\mathcal{F}^\circ_{g_\beta} - r_\beta\right)=\\
      =&\left(\alpha\right)\Upsilon\left(\beta\right)\Upsilon,
      \end{split}
      \end{equation*}
      and hence $\Upsilon$ is a  homomorphism.

      Show that the map $\Upsilon$ is surjective. For any $\left(g, z\right) \in \mathcal{S}_\kappa\times\sigma{\mathbb{Z}^\kappa}$, consider the maps $a,b\colon \kappa\to \mathbb{N}$. For any $x\in \kappa$:
      \begin{equation*}
        \left(x\right)a=
         \begin{cases}
           \left(x\right)z, &\text{if } \left(x\right)z > 0\\
           1, &\text{if } \left(x\right)z = 0\\
           0, &\text{if } \left(x\right)z < 0
         \end{cases}
        \qquad \text{and} \qquad
        \left(x\right)b=
         \begin{cases}
           0, &\text{if } \left(x\right)z > 0\\
           1, &\text{if } \left(x\right)z = 0\\
           -\left(x\right)z, &\text{if } \left(x\right)z < 0.
         \end{cases}
      \end{equation*}
    We have that $a,b\in\sigma{\mathbb{N}^\kappa}$ and $z=a-b$. Now we consider $\alpha\in\mathcal{IP\!F}\left(\sigma{\mathbb{N}^\kappa}\right)$ such that
    \begin{equation*}
      \begin{split}
        g_\alpha&=g,\\
        d_\alpha&=\left(a\right)\left(\mathcal{F}^\circ_g\right)^{-1},\\
        r_\alpha&=b.
      \end{split}
    \end{equation*}
    Then
    \begin{align*}
      \left(\alpha\right)\Upsilon& = \left(g_\alpha, \left(d_\alpha\right)\mathcal{F}^\circ_{g_\alpha} - r_\alpha\right)= \\
       &= \left(g, \left(\left(a\right)\left(\mathcal{F}^\circ_g\right)^{-1}\right)\mathcal{F}^\circ_g - b\right)=\\
       & = \left(g, a - b\right) =\\
       &= \left(g, z\right),
    \end{align*}
     so $\Upsilon$ is surjective.

    Also, Theorem~\ref{theorem-1.11} implies that $\alpha\mathfrak{C}_{\mathbf{mg}} \beta$ in $\mathcal{IP\!F}\left(\sigma{\mathbb{N}^\kappa}\right)$ if and only if $\left(\alpha\right)\Upsilon=\left(\beta\right)\Upsilon$. This implies that the homomorphism $\Upsilon$ generates the congruences $\mathfrak{C}_{\mathbf{mg}}$ on $\mathcal{IP\!F}\left(\sigma{\mathbb{N}^\kappa}\right)$.
    \end{proof}

    Every inverse semigroup $S$ admits a partial order:
    \begin{equation*}
      a\preccurlyeq b \qquad \hbox{if and only if there exists} \qquad e\in E\left(S\right) \quad \hbox{such that} \quad a=b e.
    \end{equation*}
    So defined order is called \emph{the natural partial order} on $S$. We observe that $a\preccurlyeq b$ in an inverse semigroup $S$ if and only if $a=f b$ for some $f\in E\left(S\right)$ (see \cite[Lemma~1.4.6]{Lawson-1998}).

    This and Proposition \ref{proposition-1.11} imply the following proposition, which describes the natural partial order on the semigroup $\mathcal{IP\!F}\left(\sigma{\mathbb{N}^\kappa}\right)$.

    \begin{proposition}\label{proposition-2.22}
      Let $\kappa$ be any infinite cardinal and let $\alpha,\beta\in\mathcal{IP\!F}\left(\sigma{\mathbb{N}^\kappa}\right)$. Then the following conditions are equivalent:
    \begin{itemize}
      \item[$\left(i\right)$] $\alpha\preccurlyeq\beta$;
      \item[$\left(ii\right)$] $g_\alpha=g_\beta$, $\left(d_\alpha\right)\mathcal{F}^\circ_{g_\alpha} - r_\alpha=\left(d_\beta\right)\mathcal{F}^\circ_{g_\beta} - r_\beta$ and $d_\beta\leqslant d_\alpha$ in the poset $\left(\sigma{\mathbb{N}^\kappa},\leqslant\right)$;
      \item[$\left(iii\right)$] $g_\alpha=g_\beta$, $\left(d_\alpha\right)\mathcal{F}^\circ_{g_\alpha} - r_\alpha=\left(d_\beta\right)\mathcal{F}^\circ_{g_\beta} - r_\beta$ and $r_\beta\leqslant r_\alpha$ in the poset $\left(\sigma{\mathbb{N}^\kappa},\leqslant\right)$.
    \end{itemize}
    \end{proposition}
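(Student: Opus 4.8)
The plan is to transport the condition $\alpha\preccurlyeq\beta$ through the isomorphism $\Psi$ of Theorem~\ref{theorem-1.10} and read it off from the coordinate formulas of Proposition~\ref{proposition-1.11}. Recall that in an inverse semigroup $\alpha\preccurlyeq\beta$ means $\alpha=\beta\varepsilon$ for some $\varepsilon\in E\bigl(\mathcal{IP\!F}(\sigma{\mathbb{N}^\kappa})\bigr)$, and equivalently $\alpha=\varepsilon'\beta$ for some idempotent $\varepsilon'$; by Proposition~\ref{proposition-2.1}$(ii)$, for every $c\in\sigma{\mathbb{N}^\kappa}$ the identity map of ${\uparrow}c$ is an idempotent whose domain equals ${\uparrow}c$, so idempotents with any prescribed $d_\varepsilon$ are available. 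I would prove $(i)\Leftrightarrow(iii)$ using right translations and $(i)\Leftrightarrow(ii)$ using left translations; the equivalence $(ii)\Leftrightarrow(iii)$ then comes for free, but I would also record the direct reason, which is the only slightly delicate point.

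For $(i)\Rightarrow(iii)$: if $\alpha=\beta\varepsilon$, apply $\Psi$ and the first formula of Proposition~\ref{proposition-1.11} to get $\bigl(g_\alpha,[(d_\alpha)\mathcal{F}^\circ_{g_\alpha},r_\alpha]\bigr)=\bigl(g_\beta,[\operatorname{max}\{r_\beta,d_\varepsilon\}-r_\beta+(d_\beta)\mathcal{F}^\circ_{g_\beta},\operatorname{max}\{r_\beta,d_\varepsilon\}]\bigr)$. The first coordinates give $g_\alpha=g_\beta$; the last entry of the bicyclic part gives $r_\alpha=\operatorname{max}\{r_\beta,d_\varepsilon\}\geqslant r_\beta$; and substituting this back into the remaining entry yields $(d_\alpha)\mathcal{F}^\circ_{g_\alpha}-r_\alpha=(d_\beta)\mathcal{F}^\circ_{g_\beta}-r_\beta$. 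For $(iii)\Rightarrow(i)$: assuming $(iii)$, take $\varepsilon$ with $d_\varepsilon=r_\alpha$; then $\operatorname{max}\{r_\beta,d_\varepsilon\}=r_\alpha$ since $r_\beta\leqslant r_\alpha$, and plugging into the same formula shows $(\beta\varepsilon)\Psi=(\alpha)\Psi$, hence $\alpha=\beta\varepsilon\preccurlyeq\beta$. The argument for $(i)\Leftrightarrow(ii)$ is identical with $\alpha=\varepsilon\beta$ and the second formula of Proposition~\ref{proposition-1.11}: comparing coordinates forces $g_\alpha=g_\beta$ and $(d_\alpha)\mathcal{F}^\circ_{g_\alpha}=(\operatorname{max}\{d_\varepsilon,d_\beta\})\mathcal{F}^\circ_{g_\beta}$, whence $d_\alpha=\operatorname{max}\{d_\varepsilon,d_\beta\}\geqslant d_\beta$ because $\mathcal{F}^\circ_{g_\beta}$ is injective (Corollary~\ref{corollary-1.6}), and then $(d_\alpha)\mathcal{F}^\circ_{g_\alpha}-r_\alpha=(d_\beta)\mathcal{F}^\circ_{g_\beta}-r_\beta$; conversely one takes $d_\varepsilon=d_\alpha$.

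Finally, to see directly that the order conditions in $(ii)$ and $(iii)$ are interchangeable once $g_\alpha=g_\beta$ and $(d_\alpha)\mathcal{F}^\circ_{g_\alpha}-r_\alpha=(d_\beta)\mathcal{F}^\circ_{g_\beta}-r_\beta$ hold: additivity of $\mathcal{F}_{g_\alpha}$ (Lemma~\ref{lemma-1.2-3}$(viii)$–$(ix)$, carried over to $\sigma{\mathbb{Z}^\kappa}$ by Lemma~\ref{lemma-1.2-3-next}) gives $(d_\alpha-d_\beta)\mathcal{F}^\diamond_{g_\alpha}=r_\alpha-r_\beta$ in $\sigma{\mathbb{Z}^\kappa}$, and since $\mathcal{F}^\diamond_{g_\alpha}$ merely permutes coordinates it preserves nonnegativity of every coordinate; thus $d_\beta\leqslant d_\alpha$ iff every coordinate of $d_\alpha-d_\beta$ is nonnegative iff every coordinate of $r_\alpha-r_\beta$ is nonnegative iff $r_\beta\leqslant r_\alpha$. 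I expect no genuine obstacle here, since the whole proof is coordinate bookkeeping through $\Psi$; the only points needing care are remembering to exhibit the witnessing idempotent (the identity on ${\uparrow}r_\alpha$, resp. ${\uparrow}d_\alpha$) in the two ``if'' directions, and using the $\sigma{\mathbb{Z}^\kappa}$-version $\mathcal{F}^\diamond_g$ rather than $\mathcal{F}^\circ_g$ when the differences $d_\alpha-d_\beta$ and $r_\alpha-r_\beta$ leave $\sigma{\mathbb{N}^\kappa}$.
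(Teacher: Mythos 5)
Your proposal is correct and follows exactly the route the paper intends: the paper gives no written proof, merely noting that the characterization $\alpha\preccurlyeq\beta\Leftrightarrow\alpha=\beta\varepsilon\Leftrightarrow\alpha=\varepsilon'\beta$ together with Proposition~\ref{proposition-1.11} yields the statement, and your coordinate computation through $\Psi$ (with the witnessing idempotents $d_\varepsilon=r_\alpha$, resp.\ $d_\varepsilon=d_\alpha$, in the converse directions) is precisely that argument carried out in detail. The only cosmetic slip is attributing the injectivity of $\mathcal{F}^\circ_{g_\beta}$ to Corollary~\ref{corollary-1.6} rather than to Lemma~\ref{lemma-1.2-3}$\left(i\right)$, which does not affect the proof.
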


    An inverse semigroup $S$ is said to be \emph{$E$-unitary} if $a e\in E\left(S\right)$ for some $e\in E\left(S\right)$ implies that $a\in E\left(S\right)$ \cite{Lawson-1998}. $E$-unitary inverse semigroups were introduced by Siat\^{o} in \cite{Saito-1965}, where they were called ``\emph{proper ordered inverse semigroups}''.

    \begin{proposition}\label{proposition-2.23}
      For any infinite cardinal $\kappa$, the inverse semigroup $\mathcal{IP\!F}\left(\sigma{\mathbb{N}^\kappa}\right)$ is \linebreak $E$-unitary.
    \end{proposition}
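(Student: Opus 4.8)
The plan is to translate the $E$-unitarity condition through the isomorphism $\Psi\colon\mathcal{IP\!F}\left(\sigma{\mathbb{N}^\kappa}\right)\to\mathcal{S}_\kappa\ltimes_{\Phi}\sigma{\mathbb{B}^\kappa}$ of Theorem~\ref{theorem-1.10} and reduce everything to a short computation with the formulas of Proposition~\ref{proposition-1.11}. First I would record the shape of the idempotents on the semidirect-product side: a pair $\left(g,\left[a,b\right]\right)$ is idempotent in $\mathcal{S}_\kappa\ltimes_{\Phi}\sigma{\mathbb{B}^\kappa}$ if and only if $g=id_\kappa$ and $a=b$, since $\Phi_{id_\kappa}$ is the identity automorphism and $\left[a,b\right]*_\kappa\left[a,b\right]=\left[a,b\right]$ forces $\operatorname{max}\{a,b\}=a=b$. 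Equivalently (using Proposition~\ref{proposition-2.1}$\left(ii\right)$ and Corollary~\ref{corollary-1.8d}), an element $\gamma\in\mathcal{IP\!F}\left(\sigma{\mathbb{N}^\kappa}\right)$ is idempotent exactly when $g_\gamma=id_\kappa$ and $d_\gamma=r_\gamma$.

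Now suppose $\alpha\varepsilon\in E\left(\mathcal{IP\!F}\left(\sigma{\mathbb{N}^\kappa}\right)\right)$ for some idempotent $\varepsilon$. By Proposition~\ref{proposition-1.11},
\begin{equation*}
\left(\alpha\varepsilon\right)\Psi=\left(g_\alpha,\left[\operatorname{max}\{r_\alpha,d_\varepsilon\}-r_\alpha+\left(d_\alpha\right)\mathcal{F}^\circ_{g_\alpha},\operatorname{max}\{r_\alpha,d_\varepsilon\}\right]\right).
\end{equation*}
Since $\Psi$ is an isomorphism, $\left(\alpha\varepsilon\right)\Psi$ is idempotent, so by the previous paragraph $g_\alpha=id_\kappa$; hence $\mathcal{F}^\circ_{g_\alpha}=\mathbb{I}$ and the two coordinates of the $\sigma{\mathbb{B}^\kappa}$-component must agree, i.e.
\begin{equation*}
\operatorname{max}\{r_\alpha,d_\varepsilon\}-r_\alpha+d_\alpha=\operatorname{max}\{r_\alpha,d_\varepsilon\},
\end{equation*}
and cancelling $\operatorname{max}\{r_\alpha,d_\varepsilon\}$ in the group $\sigma{\mathbb{Z}^\kappa_+}$ gives $d_\alpha=r_\alpha$. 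Therefore $\left(\alpha\right)\Psi=\left(id_\kappa,\left[d_\alpha,d_\alpha\right]\right)$ is idempotent, and since $\Psi$ is an isomorphism $\alpha\in E\left(\mathcal{IP\!F}\left(\sigma{\mathbb{N}^\kappa}\right)\right)$, as required.

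There is no serious obstacle here; the only points that need care are the explicit identification of the idempotents of $\mathcal{S}_\kappa\ltimes_{\Phi}\sigma{\mathbb{B}^\kappa}$ and the cancellation step above. Alternatively, one can avoid $\Psi$ altogether: an element $\gamma$ is idempotent iff it restricts to the identity on ${\uparrow}d_\gamma$, so one computes $g_{\alpha\varepsilon}=g_\alpha g_\varepsilon=g_\alpha$ by Corollaries~\ref{corollary-1.8c} and~\ref{corollary-1.8d}, together with $d_{\alpha\varepsilon}$ and $r_{\alpha\varepsilon}$ from Lemma~\ref{lemma-1.8}, and the same cancellation yields $g_\alpha=id_\kappa$ and $d_\alpha=r_\alpha$. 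One may also argue via the least group congruence: $\alpha\varepsilon\in E$ forces $\alpha\,\mathfrak{C}_{\mathbf{mg}}\,\alpha\varepsilon\,\mathfrak{C}_{\mathbf{mg}}\,\varepsilon$, so $\left(\alpha\right)\Upsilon$ equals the identity of $\mathcal{S}_\kappa\ltimes_{\Theta}\left(\sigma{\mathbb{Z}^\kappa},+\right)$ by Theorem~\ref{theorem-1.12}, which again means $g_\alpha=id_\kappa$ and $\left(d_\alpha\right)\mathcal{F}^\circ_{g_\alpha}=r_\alpha$, i.e. $\alpha\in E\left(\mathcal{IP\!F}\left(\sigma{\mathbb{N}^\kappa}\right)\right)$.
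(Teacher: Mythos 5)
Your proof is correct and follows essentially the same route as the paper: apply Proposition~\ref{proposition-1.11} to $\left(\alpha\varepsilon\right)\Psi$, use the characterization of idempotents (namely $g=id_\kappa$ and equal coordinates in the $\sigma{\mathbb{B}^\kappa}$-component) to deduce $g_\alpha=id_\kappa$ and $d_\alpha=r_\alpha$, hence $\alpha\in E\left(\mathcal{IP\!F}\left(\sigma{\mathbb{N}^\kappa}\right)\right)$. You simply spell out the cancellation and the identification of idempotents that the paper leaves implicit, and your sketched alternatives are consistent with this.
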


    \begin{proof}[\textsl{Proof}]
      Let $\alpha\in\mathcal{IP\!F}\left(\sigma{\mathbb{N}^\kappa}\right)$. Suppose that $\alpha\varepsilon$ is an idempotent for some idempotent $\varepsilon\in\mathcal{IP\!F}\left(\sigma{\mathbb{N}^\kappa}\right)$. Then Proposition \ref{proposition-1.11} and the definition of idempotents imply that $g_\alpha=id_\kappa$ and $d_\alpha=\left(d_\alpha\right)\mathcal{F}_{g_\alpha}=r_\alpha$, so $\alpha$ is an idempotent.
    \end{proof}

    An inverse semigroup $S$ is called \emph{$F$-inverse}, if the $\mathfrak{C}_{\textsf{mg}}$-class $s_{\mathfrak{C}_{\textsf{mg}}}$ of each element $s$ has the top (biggest) element with the respect to the natural partial order on $S$ \cite{McFadden-Carroll-1971}.

    \begin{proposition}\label{proposition-2.24}
    For any infinite cardinal $\kappa$, the semigroup $\mathcal{IP\!F}\left(\sigma{\mathbb{N}^\kappa}\right)$ is an $F$-inverse semigroup.
    \end{proposition}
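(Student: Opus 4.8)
The plan is to exhibit, for each $\mathfrak{C}_{\mathbf{mg}}$-class, an explicit top element with respect to the natural partial order and then invoke Proposition~\ref{proposition-2.22}. The congruence $\mathfrak{C}_{\mathbf{mg}}$ is characterized in Theorem~\ref{theorem-1.11} by the pair of invariants $g_\alpha$ and $z_\alpha := \left(d_\alpha\right)\mathcal{F}^\circ_{g_\alpha}-r_\alpha \in \sigma{\mathbb{Z}^\kappa}$, and by Theorem~\ref{theorem-1.12} the quotient is identified, via $\Upsilon$, with $\mathcal{S}_\kappa\ltimes_{\Theta}\left(\sigma{\mathbb{Z}^\kappa},+\right)$. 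So fix $\alpha\in\mathcal{IP\!F}\left(\sigma{\mathbb{N}^\kappa}\right)$; I must produce $\widehat{\alpha}$ in the same $\mathfrak{C}_{\mathbf{mg}}$-class with $\beta\preccurlyeq\widehat{\alpha}$ for every $\beta$ with $\beta\mathfrak{C}_{\mathbf{mg}}\alpha$.

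First I would construct the candidate top element. Write $z=z_\alpha$ and let $g=g_\alpha$. Define elements $a,b\in\sigma{\mathbb{N}^\kappa}$ pointwise by $\left(x\right)a=\max\{\left(x\right)z,0\}+1$ and $\left(x\right)b=\max\{-\left(x\right)z,0\}+1$ — exactly as in the surjectivity argument of Theorem~\ref{theorem-1.12}, but shifted up by $\mathbf 1$ so both land in $\sigma{\mathbb{N}^\kappa}$ rather than merely $\mathbb{N}^\kappa$; then $a-b=z$, and crucially $b=\min\{a,b,\mathbf{1}\,\}$-componentwise one of $\left(x\right)a,\left(x\right)b$ equals $1$ for every $x$. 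Now let $\widehat{\alpha}$ be the unique element of $\mathcal{IP\!F}\left(\sigma{\mathbb{N}^\kappa}\right)$ with $g_{\widehat\alpha}=g$, $r_{\widehat\alpha}=b$ and $d_{\widehat\alpha}=\left(a\right)\left(\mathcal{F}^\circ_{g}\right)^{-1}$, which exists by Corollary~\ref{corollary-1.6} (equivalently, $\widehat\alpha=\rho_{\widehat\alpha}\mathcal{F}^\circ_{g}\lambda_{\widehat\alpha}$ with those domain/range data). Then $\left(d_{\widehat\alpha}\right)\mathcal{F}^\circ_{g}-r_{\widehat\alpha}=a-b=z$, so $\widehat\alpha\mathfrak{C}_{\mathbf{mg}}\alpha$ by Theorem~\ref{theorem-1.11}.

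Next I would check the maximality. Let $\beta\mathfrak{C}_{\mathbf{mg}}\alpha$, so $g_\beta=g$ and $\left(d_\beta\right)\mathcal{F}^\circ_{g}-r_\beta=z$. By Proposition~\ref{proposition-2.22}$\left(iii\right)$, to get $\beta\preccurlyeq\widehat\alpha$ it suffices to verify $r_{\widehat\alpha}\leqslant r_\beta$, i.e. $b\leqslant r_\beta$ in $\left(\sigma{\mathbb{N}^\kappa},\leqslant\right)$. Fix $x\in\kappa$. If $\left(x\right)z\geqslant 0$ then $\left(x\right)b=1\leqslant\left(x\right)r_\beta$ since $r_\beta\in\sigma{\mathbb{N}^\kappa}$. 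If $\left(x\right)z<0$, then from $\left(x\right)\left(d_\beta\right)\mathcal{F}^\circ_{g}-\left(x\right)r_\beta=\left(x\right)z$ and $\left(x\right)\left(d_\beta\right)\mathcal{F}^\circ_{g}\geqslant 1$ (again because $d_\beta\in\sigma{\mathbb{N}^\kappa}$ and $\mathcal{F}^\circ_{g}$ maps $\sigma{\mathbb{N}^\kappa}$ to itself by Lemma~\ref{lemma-1.2-3}$\left(ii\right)$) we get $\left(x\right)r_\beta\geqslant 1-\left(x\right)z=\left(x\right)b$. Hence $b\leqslant r_\beta$ in all cases, so $\beta\preccurlyeq\widehat\alpha$, and therefore $\widehat\alpha$ is the greatest element of the $\mathfrak{C}_{\mathbf{mg}}$-class of $\alpha$.

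The only subtle point — and the one I would be most careful about — is the interaction between the permutation twist $\mathcal{F}^\circ_{g}$ and the coordinatewise construction of $b$: I must make sure the inequality I need is the ``raw'' one $b\leqslant r_\beta$ and not a twisted version of it, which is why I prefer to phrase the maximality test via the $r$-coordinate (Proposition~\ref{proposition-2.22}$\left(iii\right)$) rather than the $d$-coordinate, since $b=r_{\widehat\alpha}$ is directly a value of $\mathcal F^\circ_g$-untouched data while $d_{\widehat\alpha}$ carries the twist; the relation $\left(d_\beta\right)\mathcal{F}^\circ_{g}-r_\beta=z$ is already in the ``untwisted'' $\sigma{\mathbb{Z}^\kappa}$ picture, so the estimate above goes through coordinate by coordinate with no permutation bookkeeping. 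Everything else is routine verification using results already established.
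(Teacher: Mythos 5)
Your proof is correct, and your candidate top element is in fact the same element the paper constructs: the paper takes $r_\beta=r_\alpha-\operatorname{min}\{\left(d_\alpha\right)\mathcal{F}^\circ_{g_\alpha},r_\alpha\}+\mathbf{1}$ and $d_\beta=d_\alpha-\operatorname{min}\{d_\alpha,\left(r_\alpha\right)\left(\mathcal{F}^\circ_{g_\alpha}\right)^{-1}\}+\mathbf{1}$, which coordinatewise is exactly your $b=\operatorname{max}\{-z,0\}+\mathbf{1}$ and $\left(d\right)\mathcal{F}^\circ_{g}=a=\operatorname{max}\{z,0\}+\mathbf{1}$. The difference is only in how maximality is verified: you invoke Proposition~\ref{proposition-2.22}$\left(iii\right)$ and check the single coordinatewise inequality $r_{\widehat\alpha}\leqslant r_\beta$ (using $\left(x\right)\left(d_\beta\right)\mathcal{F}^\circ_g\geq 1$), whereas the paper avoids Proposition~\ref{proposition-2.22} altogether and instead, for each $\gamma$ in the class, exhibits the idempotent $\varepsilon$ with $d_\varepsilon=r_\gamma$ and computes $\left(\beta\right)\Psi\left(\varepsilon\right)\Psi=\left(\gamma\right)\Psi$ via Proposition~\ref{proposition-1.11}, so that $\gamma=\beta\varepsilon\preccurlyeq\beta$ by the definition of the natural partial order. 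Your route is shorter but rests on Proposition~\ref{proposition-2.22}, which the paper states with only a one-line justification; the paper's route is self-contained modulo Proposition~\ref{proposition-1.11}. One cosmetic slip: your parenthetical claim that ``$b=\min\{a,b,\mathbf{1}\}$'' is garbled (what is true is $\operatorname{min}\{a,b\}=\mathbf{1}$), but it is never used, so the argument is unaffected.
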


    \begin{proof}[\textsl{Proof}]
    Let $\alpha\in\mathcal{IP\!F}\left(\sigma{\mathbb{N}^\kappa}\right)$. Consider an element $\beta \in\mathcal{IP\!F}\left(\sigma{\mathbb{N}^\kappa}\right)$ such that
    \begin{equation*}
      \begin{split}
        g_\beta&=g_\alpha,\\
        d_\beta&=d_\alpha - \operatorname{min}\{d_\alpha, \left(r_\alpha\right)\left(\mathcal{F}^\circ_{g_\alpha}\right)^{-1} \} + \mathbf{1},\\
        r_\beta&=r_\alpha - \operatorname{min}\{\left(d_\alpha\right)\mathcal{F}^\circ_{g_\alpha}, r_\alpha \} + \mathbf{1}.
      \end{split}
    \end{equation*}
    We have that $\operatorname{min}\{d_\alpha, \left(r_\alpha\right)\left(\mathcal{F}^\circ_{g_\alpha}\right)^{-1} \} \in \sigma{\mathbb{N}^\kappa}$ and $\operatorname{min}\{d_\alpha, \left(r_\alpha\right)\left(\mathcal{F}^\circ_{g_\alpha}\right)^{-1} \} \leqslant d_\alpha$, so $d_\beta \in \sigma{\mathbb{N}^\kappa}$. Similar $r_\beta \in \sigma{\mathbb{N}^\kappa}$, so $\beta$ is well-defined.
    Also, we have that $g_\beta=g_\alpha$ and
    \begin{equation*}
      \begin{split}
        \left(d_\beta\right)\mathcal{F}^\circ_{g_\beta}{-}r_\beta=&
        \left(d_\alpha {-} \operatorname{min}\{d_\alpha, \left(r_\alpha\right)\left(\mathcal{F}^\circ_{g_\alpha}\right)^{-1} \} {+} \mathbf{1}\right)\mathcal{F}^\circ_{g_\alpha}{-}
        \left(r_\alpha {-} \operatorname{min}\{\left(d_\alpha\right)\mathcal{F}^\circ_{g_\alpha}, r_\alpha \} {+} \mathbf{1}\right)=\\
        =&\left(d_\alpha\right)\mathcal{F}^\circ_{g_\alpha} - \operatorname{min}\{\left(d_\alpha\right)\mathcal{F}^\circ_{g_\alpha}, r_\alpha \} + \mathbf{1}
        -r_\alpha + \operatorname{min}\{\left(d_\alpha\right)\mathcal{F}^\circ_{g_\alpha}, r_\alpha \} - \mathbf{1}=\\
        =&\left(d_\alpha\right)\mathcal{F}^\circ_{g_\alpha} - r_\alpha,
      \end{split}
    \end{equation*}
    then Theorem~\ref{theorem-1.11} implies that $\beta\mathfrak{C}_{\mathbf{mg}}\alpha$.

    Now, for any $\gamma\in\mathcal{IP\!F}\left(\sigma{\mathbb{N}^\kappa}\right)$, such that $\gamma\mathfrak{C}_{\mathbf{mg}}\alpha$, we consider the idempotent $\varepsilon$ with $d_\varepsilon=r_\gamma$ and consider the product $\left(\beta\right)\Psi\left(\varepsilon\right)\Psi$. By Proposition~\ref{proposition-1.11}
    \begin{equation*}
      \begin{split}
        \left(\beta\right)\Psi\left(\varepsilon\right)\Psi &= \left(g_\beta,\left[\left(d_\beta\right)\mathcal{F}^\circ_{g_\beta}, r_\beta \right]\right) \left(id_\kappa, \left[d_\varepsilon, d_\varepsilon\right]\right)=\\
        &=
        \left(g_\beta,\left[\operatorname{max}\{r_\beta,d_\varepsilon\}-r_\beta+\left(d_\beta\right)\mathcal{F}^\circ_{g_\beta}, \operatorname{max}\{r_\beta,d_\varepsilon\} \right]\right) =\\
        &=\left(g_\beta,\left[\operatorname{max}\{r_\beta,r_\gamma\}-r_\beta+\left(d_\beta\right)\mathcal{F}^\circ_{g_\beta}, \operatorname{max}\{r_\beta,r_\gamma\} \right]\right).
      \end{split}
    \end{equation*}

    Since $\gamma\mathfrak{C}_{\mathbf{mg}}\alpha$, by Theorem~\ref{theorem-1.11} we have that $g_\gamma=g_\alpha$ and $r_\gamma - \left(d_\gamma\right)\mathcal{F}^\circ_{g_\gamma} = r_\alpha - \left(d_\alpha\right)\mathcal{F}^\circ_{g_\alpha}$, then for any $x\in \kappa$
    \begin{equation*}
      \begin{split}
        \left(x\right)&\left(\operatorname{max}\{r_\beta,r_\gamma\}\right) =
        \left(x\right)\left(\operatorname{max}\{r_\alpha - \operatorname{min}\{\left(d_\alpha\right)\mathcal{F}^\circ_{g_\alpha}, r_\alpha \} + \mathbf{1},r_\gamma\}\right) =\\
        =&
        \begin{cases}
          \operatorname{max}\{\left(x\right)r_\alpha - \left(x\right)r_\alpha + \left(x\right)\mathbf{1},\left(x\right)r_\gamma\}, &\text{if } \left(x\right)\left(d_\alpha\right)\mathcal{F}^\circ_{g_\alpha} > \left(x\right)r_\alpha\\
          \operatorname{max}\{\left(x\right)r_\alpha - \left(x\right)\left(d_\alpha\right)\mathcal{F}^\circ_{g_\alpha} + \left(x\right)\mathbf{1},\left(x\right)r_\gamma\}, &\text{otherwise} %\text{if } \left(x\right)\left(d_\alpha\right)\mathcal{F}_{g_\alpha} \leqslant \left(x\right)r_\alpha
        \end{cases}
        =\\
        =&
        \begin{cases}
          \operatorname{max}\{1,\left(x\right)r_\gamma\}, &\text{if } \left(x\right)\left(d_\alpha\right)\mathcal{F}^\circ_{g_\alpha} > \left(x\right)r_\alpha\\
          \operatorname{max}\{\left(x\right)r_\gamma - \left(x\right)\left(d_\gamma\right)\mathcal{F}^\circ_{g_\gamma} + 1,\left(x\right)r_\gamma\}, &\text{otherwise} %\text{if } \left(x\right)\left(d_\alpha\right)\mathcal{F}_{g_\alpha} \leqslant \left(x\right)r_\alpha
        \end{cases}=\\
        =&\left(x\right)r_\gamma,
      \end{split}
    \end{equation*}
    so $\operatorname{max}\{r_\beta,r_\gamma\}=r_\gamma$. Also
    \begin{equation*}
      \begin{split}
        \operatorname{max}\{r_\beta,r_\gamma\}-r_\beta+\left(d_\beta\right)\mathcal{F}^\circ_{g_\beta}&=
        r_\gamma-r_\beta+\left(d_\beta\right)\mathcal{F}^\circ_{g_\beta}=\\
        &=r_\gamma - r_\alpha + \left(d_\alpha\right)\mathcal{F}^\circ_{g_\alpha}=\\
        &=\left(d_\gamma\right)\mathcal{F}^\circ_{g_\gamma},
      \end{split}
    \end{equation*}
    so
    \begin{equation*}
    \left(g_\beta,\left[\operatorname{max}\{r_\beta,r_\gamma\}-r_\beta+\left(d_\beta\right)\mathcal{F}^\circ_{g_\beta}, \operatorname{max}\{r_\beta,r_\gamma\} \right]\right)=\left(g_\gamma,\left[\left(d_\gamma\right)\mathcal{F}^\circ_{g_\gamma}, r_\gamma \right]\right)=\left(\gamma\right)\Psi.
    \end{equation*}

    The equality $\left(\beta\right)\Psi\left(\varepsilon\right)\Psi=\left(\gamma\right)\Psi$ implies that $\gamma=\beta\varepsilon$, so $\gamma\preccurlyeq\beta$. This means that the element $\beta$ is the biggest element in the $\mathfrak{C}_{\mathbf{mg}}$-class of the element $\alpha$ in $\mathcal{IP\!F}\left(\sigma{\mathbb{N}^\kappa}\right)$.
    \end{proof}

    \begin{lemma}\label{lemma-2.20}
      Let $\kappa$ be any infinite cardinal and let $\mathfrak{C}$ be a congruence on the semigroup $\mathcal{IP\!F}\left(\sigma{\mathbb{N}^\kappa}\right)$ such that $\varepsilon\mathfrak{C}\iota$ for some two distinct idempotents $\varepsilon,\iota\in\mathcal{IP\!F}\left(\sigma{\mathbb{N}^\kappa}\right)$. Then $\varsigma\mathfrak{C}\upsilon$ for all idempotents $\varsigma,\upsilon$ of $\mathcal{IP\!F}\left(\sigma{\mathbb{N}^\kappa}\right)$.
      \end{lemma}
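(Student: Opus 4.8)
The plan is to prove the stronger statement that every idempotent of $\mathcal{IP\!F}\left(\sigma{\mathbb{N}^\kappa}\right)$ is $\mathfrak{C}$-equivalent to the identity $\mathbb{I}$, which clearly implies the lemma. Throughout I will use the identification of $E\!\left(\mathcal{IP\!F}\left(\sigma{\mathbb{N}^\kappa}\right)\right)$ with $\left(\sigma{\mathbb{N}^\kappa},\operatorname{max}\right)$ given by Proposition~\ref{proposition-2.1}$(ii)$, writing $\varepsilon_a$ for the idempotent with $d_{\varepsilon_a}=a$, so that $\varepsilon_a\varepsilon_b=\varepsilon_{\operatorname{max}\{a,b\}}$ and $\varepsilon_{\mathbf{1}}=\mathbb{I}$. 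The tool used at every stage is that a congruence is compatible with two-sided multiplication, so from $u\,\mathfrak{C}\,v$ one may always pass to $sut\,\mathfrak{C}\,svt$, even when $s,t$ are not units.

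First I would normalise the hypothesis. From $\varepsilon\,\mathfrak{C}\,\iota$ with $\varepsilon\neq\iota$, multiplying by $\varepsilon$ and by $\iota$ gives $\varepsilon\,\mathfrak{C}\,\varepsilon\iota$ and $\iota\,\mathfrak{C}\,\varepsilon\iota$; since $d_{\varepsilon\iota}=\operatorname{max}\{d_\varepsilon,d_\iota\}$ is distinct from at least one of $d_\varepsilon,d_\iota$, I obtain $\mathfrak{C}$-related idempotents $\varepsilon_e\,\mathfrak{C}\,\varepsilon_f$ with $e\leqslant f$ and $e\neq f$. Then I would ``translate to the origin'': let $\alpha\in\mathcal{IP\!F}\left(\sigma{\mathbb{N}^\kappa}\right)$ be the order isomorphism ${\uparrow}e\to\sigma{\mathbb{N}^\kappa}$, $a\mapsto a-e+\mathbf{1}$ (that is, $\alpha=\rho_{\varepsilon_e}$). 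A computation with domains shows $\alpha^{-1}\varepsilon_e\alpha=\mathbb{I}$ and $\alpha^{-1}\varepsilon_f\alpha=\varepsilon_c$, where $c=f-e+\mathbf{1}$, and one checks that $c\in\sigma{\mathbb{N}^\kappa}$ and $c\neq\mathbf{1}$. Conjugating $\varepsilon_e\,\mathfrak{C}\,\varepsilon_f$ by $\alpha$ then yields $\mathbb{I}\,\mathfrak{C}\,\varepsilon_c$ with $c\neq\mathbf{1}$.

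Next I would isolate, spread, and then climb a single coordinate. Picking $y\in\kappa$ with $\left(y\right)c\geq 2$ one has $2_y\leqslant c$, hence $\varepsilon_{2_y}\varepsilon_c=\varepsilon_c$; multiplying $\mathbb{I}\,\mathfrak{C}\,\varepsilon_c$ on the left by $\varepsilon_{2_y}$ gives $\varepsilon_{2_y}\,\mathfrak{C}\,\varepsilon_c$, so $\mathbb{I}\,\mathfrak{C}\,\varepsilon_{2_y}$. For an arbitrary $x\in\kappa$ choose $h\in\mathcal{S}_\kappa$ with $\left(y\right)h=x$; conjugating $\mathbb{I}\,\mathfrak{C}\,\varepsilon_{2_y}$ by the unit $\mathcal{F}^\circ_h$ (Theorem~\ref{theorem-1.3}) and using Lemma~\ref{lemma-1.2-3}$(v)$ to see that $\left({\uparrow}2_y\right)\mathcal{F}^\circ_h={\uparrow}2_x$, I obtain $\mathbb{I}\,\mathfrak{C}\,\varepsilon_{2_x}$ for every $x$. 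Finally, fixing $x$ and letting $\beta$ be the order isomorphism $\sigma{\mathbb{N}^\kappa}\to{\uparrow}2_x$, $a\mapsto a+2_x-\mathbf{1}$, a domain computation gives $\beta^{-1}\mathbb{I}\beta=\varepsilon_{2_x}$ and $\beta^{-1}\varepsilon_{k_x}\beta=\varepsilon_{(k+1)_x}$; conjugating $\mathbb{I}\,\mathfrak{C}\,\varepsilon_{k_x}$ by $\beta$ gives $\varepsilon_{2_x}\,\mathfrak{C}\,\varepsilon_{(k+1)_x}$, so by induction $\mathbb{I}\,\mathfrak{C}\,\varepsilon_{k_x}$ for all $k\geq 2$ and all $x\in\kappa$.

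To finish, any idempotent is of the form $\varepsilon_a$ with $a=\operatorname{max}\{(k_1)_{x_1},\dots,(k_n)_{x_n}\}$ for finitely many pairs $\left(x_i,k_i\right)$ with $k_i\geq 2$ (and $\varepsilon_a=\mathbb{I}$ if there are none), so $\varepsilon_a=\varepsilon_{(k_1)_{x_1}}\cdots\varepsilon_{(k_n)_{x_n}}\,\mathfrak{C}\,\mathbb{I}\cdots\mathbb{I}=\mathbb{I}$, and hence $\varsigma\,\mathfrak{C}\,\mathbb{I}\,\mathfrak{C}\,\upsilon$ for all idempotents $\varsigma,\upsilon$. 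I expect the only real work to be the two ``conjugation by a non-unit translation'' computations --- verifying $\alpha^{-1}\varepsilon_f\alpha=\varepsilon_{f-e+\mathbf{1}}$ and $\beta^{-1}\varepsilon_{k_x}\beta=\varepsilon_{(k+1)_x}$, which are precisely the bicyclic shift relations carried out one coordinate at a time --- together with the bookkeeping ensuring $f-e+\mathbf{1}\in\sigma{\mathbb{N}^\kappa}\setminus\{\mathbf{1}\}$; everything else is a formal chase through the congruence.
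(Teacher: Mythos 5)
Your proposal is correct, and its skeleton is the same as the paper's: normalize to two comparable distinct $\mathfrak{C}$-related idempotents, conjugate by the translation $\rho$ to the origin to obtain $\mathbb{I}\,\mathfrak{C}\,\varepsilon_c$ with $c\neq\mathbf{1}$, extract $\mathbb{I}\,\mathfrak{C}\,\varepsilon_{2_y}$, and spread this over all coordinates by conjugating with the units $\mathcal{F}^\circ_h$. The only divergence is the final stage: the paper iterates a single ``diagonal'' shift $\gamma$ with range ${\uparrow}2_A$ to get idempotents with domains ${\uparrow}(n+1)_A$ and then uses the downward-closure observation (an idempotent $\preccurlyeq\zeta$ that is $\mathfrak{C}$-related to $\mathbb{I}$ forces $\zeta\,\mathfrak{C}\,\mathbb{I}$), whereas you climb one coordinate at a time via $\beta^{-1}\varepsilon_{k_x}\beta=\varepsilon_{(k+1)_x}$ and then write an arbitrary idempotent as a finite product of the one-coordinate idempotents $\varepsilon_{(k_i)_{x_i}}$; both variants rest on the same conjugation-by-translation trick and are equally valid.
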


      \begin{proof}[\textsl{Proof}]
        We observe that without loss of generality we may assume that $\varepsilon\preccurlyeq\iota$ where $\preccurlyeq$ is the natural partial order on the semilattice $E(\mathcal{IP\!F}\left(\sigma{\mathbb{N}^\kappa}\right))$. Indeed, if $\varepsilon,\iota\in E(\mathcal{IP\!F}\left(\sigma{\mathbb{N}^\kappa}\right))$ then $\varepsilon\mathfrak{C}\iota$ implies that $\varepsilon=\varepsilon\varepsilon\mathfrak{C}\iota\varepsilon$, and since the idempotents $\varepsilon$ and $\iota$ are distinct in $\mathcal{IP\!F}\left(\sigma{\mathbb{N}^\kappa}\right)$ we have that $\iota\varepsilon\preccurlyeq\varepsilon$.

        Now, the inequality $\varepsilon\preccurlyeq\iota$ implies that $\operatorname{dom}\varepsilon\subseteq \operatorname{dom}\iota$. %and hence $d_\iota\leqslant d_\varepsilon$.
        Next, we define partial map $\alpha\colon\sigma{\mathbb{N}^\kappa}\rightharpoonup\sigma{\mathbb{N}^\kappa}$ in the following way:
        \begin{equation*}
          \operatorname{dom}\alpha=\sigma{\mathbb{N}^\kappa}, \qquad \operatorname{ran}\alpha=\operatorname{dom}\iota \qquad \hbox{and}\qquad \left(z\right)\alpha=z+d_\iota-\mathbf{1}, \quad \hbox{for any} \quad z\in\operatorname{dom}\alpha.
        \end{equation*}
        The definition of $\alpha$ implies that $\alpha\iota\alpha^{-1}=\alpha\alpha^{-1}=\mathbb{I}$ and $\alpha^{-1}\alpha=\iota$, and moreover, we have that
        \begin{align*}
          \left(\alpha\varepsilon\alpha^{-1}\right)\left(\alpha\varepsilon\alpha^{-1}\right)&  =\alpha\varepsilon\left(\alpha^{-1}\alpha\right)\varepsilon\alpha^{-1}= \\
           &=\alpha\varepsilon\iota\varepsilon\alpha^{-1}=\\
           &=\alpha\varepsilon\varepsilon\alpha^{-1}=\\
           &=\alpha\varepsilon\alpha^{-1},
        \end{align*}
        which implies that $\alpha\varepsilon\alpha^{-1}$ is an idempotent of $\mathcal{IP\!F}\left(\sigma{\mathbb{N}^\kappa}\right)$ such that $\alpha\varepsilon\alpha^{-1}\neq \mathbb{I}$.

        Thus, it was shown that there exists a non-unit idempotent $\varepsilon^*$ in $\mathcal{IP\!F}\left(\sigma{\mathbb{N}^\kappa}\right)$ such that $\varepsilon^*\mathfrak{C}\mathbb{I}$. This implies that $\varepsilon_0\mathfrak{C}\mathbb{I}$ for any idempotent $\varepsilon_0$ of $\mathcal{IP\!F}\left(\sigma{\mathbb{N}^\kappa}\right)$ such that $\varepsilon^*\preccurlyeq\varepsilon_0\preccurlyeq\mathbb{I}$. Since $\varepsilon^* \neq \mathbb{I}$ we have that $d_{\varepsilon^*} \neq \mathbf{1}$, so there exists $x\in\kappa$ such that $\left(x\right)d_{\varepsilon^*} \neq 1$, thus $2_x\leqslant d_{\varepsilon^*}$. Consider an idempotent $\varepsilon_x$ in $\mathcal{IP\!F}\left(\sigma{\mathbb{N}^\kappa}\right)$ such that $d_{\varepsilon_x} = 2_x$. Then $d_{\varepsilon_x} = 2_x\leqslant d_{\varepsilon^*}$ implies that $\varepsilon^*\preccurlyeq\varepsilon_x$, so $\varepsilon_x\mathfrak{C}\mathbb{I}$.

        Fix an arbitrary $y\in\kappa\setminus\{x\}$. Define a bijection on the set $\kappa$ in the following way:
        \begin{equation*}
          \left(x\right)g=y, \qquad \left(y\right)g=x \qquad \hbox{and}\qquad \left(t\right)g=t, \quad \hbox{for } \quad t\in\kappa\setminus\{x,y\}.
        \end{equation*}
        Next, consider the map $\mathcal{F}^\circ_g$ as an element of $\mathcal{IP\!F}\left(\sigma{\mathbb{N}^\kappa}\right)$. The definition of $g$ implies that $g^{-1}=g$, then, by Lemma~\ref{lemma-1.2-3}$\left(i\right)$ we have that $\left(\mathcal{F}^\circ_g\right)^{-1}=\mathcal{F}^\circ_{g^{-1}}=\mathcal{F}^\circ_g$ and then
        \begin{equation*}
        \mathcal{F}^\circ_g\mathbb{I}\mathcal{F}^\circ_g=\mathcal{F}^\circ_g\mathcal{F}^\circ_g=\mathcal{F}^\circ_g\left(\mathcal{F}^\circ_g\right)^{-1} = \mathbb{I}.
        \end{equation*}
        The calculations
        \begin{equation*}
          \begin{split}
            \left(\mathcal{F}^\circ_g \varepsilon_x \mathcal{F}^\circ_g\right)\Psi=&
            \left(\mathcal{F}^\circ_g\right)\Psi\left(\varepsilon_{d_x}\right)\Psi\left(\mathcal{F}^\circ_g\right)\Psi=\\
            &=\left(g,\left[\mathbf{1}, \mathbf{1}\right]\right) \left(id_\kappa,\left[2_x, 2_x\right]\right) \left(g,\left[\mathbf{1}, \mathbf{1}\right]\right)=\\
            &=\left(g,\left[2_x, 2_x\right]\right) \left(g,\left[\mathbf{1}, \mathbf{1}\right]\right) =\\
            &=\left(gg,\left[\left(2_x\right)\mathcal{F}^\circ_g, \left(2_x\right)\mathcal{F}^\circ_g\right]\right)=\\
            &=\left(id_\kappa,\left[2_{\left(x\right)g}, 2_{\left(x\right)g}\right]\right)=\\
            &=\left(id_\kappa,\left[2_y, 2_y\right]\right)=\\
            &=\left(\varepsilon_y\right)\Psi
          \end{split}
        \end{equation*}
        shows that $\mathcal{F}^\circ_g \varepsilon_x \mathcal{F}^\circ_g = \varepsilon_y$, where $\varepsilon_y$ is an idempotent in $\mathcal{IP\!F}\left(\sigma{\mathbb{N}^\kappa}\right)$ such that $d_{\varepsilon_y} = 2_y$. Then
        \begin{equation*}
          \varepsilon_y=\left(\mathcal{F}^\circ_g \varepsilon_x \mathcal{F}^\circ_g\right) \mathfrak{C} \left(\mathcal{F}^\circ_g\mathbb{I}\mathcal{F}^\circ_g\right)=\mathbb{I}
        \end{equation*}
        implies that $\varepsilon_y \mathfrak{C} \mathbb{I}$.

      The above arguments imply that $\varepsilon_x\mathfrak{C}\mathbb{I}$ for every idempotent $\varepsilon_x\in\mathcal{IP\!F}\left(\sigma{\mathbb{N}^\kappa}\right)$ such that $\varepsilon_x$ is the identity map of the principal filter ${\uparrow}2_x$ of the poset $\left(\sigma{\mathbb{N}^\kappa}, \leqslant\right)$, $x\in\kappa$. Now, fix an idempotent $\zeta$ in $\mathcal{IP\!F}\left(\sigma{\mathbb{N}^\kappa}\right)$ and consider the set $A=\{ x\in\kappa \mid \left(x\right)d_\zeta \neq 1 \}$. Since $d_\zeta\in\sigma{\mathbb{N}^\kappa}$ the set $A$ is finite, so there exists $k\in\mathbb{N}$ such that $A=\{x_1, x_2,\dots,x_k\}$ for some $x_1, x_2,\dots,x_k\in\kappa$. Consider the idempotent $\varepsilon_A=\varepsilon_{x_1}\dots\varepsilon_{x_k}$. Since $\mathfrak{C}$ is congruence, $\varepsilon_{x_i}\mathfrak{C}\mathbb{I}$ for any $x_i\in A$ and $A$ is finite we have that $\left(\varepsilon_{x_1}\dots\varepsilon_{x_k}\right)\mathfrak{C}\mathbb{I}$. The definition of $\varepsilon_A$ and the semigroup operation of $\mathcal{IP\!F}\left(\sigma{\mathbb{N}^\kappa}\right)$ imply that $d_{\varepsilon_A}=2_A$, where
      \begin{equation*}
        \left(t\right)2_A =
         \begin{cases}
           2 &\text{if $t\in A$}\\
           1 &\text{otherwise.}
         \end{cases}
        \end{equation*}
        We define the partial map $\gamma\colon \sigma{\mathbb{N}^\kappa}\rightharpoonup \sigma{\mathbb{N}^\kappa}$ in the following way:
        \begin{equation*}
          \operatorname{dom}\gamma=\sigma{\mathbb{N}^\kappa}, \quad \operatorname{ran}\gamma={\uparrow}2_A \quad  \hbox{~and~} \quad \left(z\right)\gamma=z+2_A-\mathbf{1}, \quad  \hbox{~for any~} \quad z\in\operatorname{dom}\gamma.
        \end{equation*}
        The definition of $\gamma$ implies that that $\gamma\gamma^{-1}=\mathbb{I}$ and $\gamma^{-1}\gamma=\varepsilon_A$. For any positive integer $n\in\mathbb{N}$ consider the idempotent
        \begin{equation*}
        \left(\gamma^{-1}\right)^n\gamma^n = \underbrace{\gamma^{-1}\dots\gamma^{-1}}_{n\hbox{\footnotesize{-times}}} \underbrace{\gamma\dots\gamma}_{n\hbox{\footnotesize{-times}}}.
        \end{equation*}
        Since $\varepsilon_A = \gamma^{-1}\gamma\mathfrak{C}\mathbb{I}$ we have that $\gamma^{-1}\gamma^{-1}\gamma\gamma\mathfrak{C}\gamma^{-1}\gamma=\varepsilon_A$ and $\gamma^{-1}\gamma^{-1}\gamma\gamma\mathfrak{C}\mathbb{I}$, so by induction $\left(\gamma^{-1}\right)^n\gamma^n\mathfrak{C}\mathbb{I}$, for any $n\in\mathbb{N}$. Also, by induction, we have that $d_{\left(\gamma^{-1}\right)^n\gamma^n}=(n+1)_A$, where
        \begin{equation*}
          \left(t\right)(n+1)_A =
           \begin{cases}
             n+1 &\text{if $t\in A$}\\
             1 &\text{otherwise,}
           \end{cases}
          \end{equation*}
        for any $n\in\mathbb{N}$. Thus, we have that
        \begin{equation*}
        d_\zeta \leqslant d_{\left(\gamma^{-1}\right)^m\gamma^m} = (m+1)_A,
        \end{equation*}
        where $m~=~\operatorname{max}\{\left(x\right)d_\zeta \mid x\in\kappa\}$, implies that $\left(\gamma^{-1}\right)^m\gamma^m\preccurlyeq \zeta$, so $\zeta \mathfrak{C}\mathbb{I}$.
      \end{proof}

    \begin{lemma}\label{lemma-2.21}
      Let $\kappa$ be any infinite cardinal and let $\mathfrak{C}$ be a congruence on the semigroup $\mathcal{IP\!F}\left(\sigma{\mathbb{N}^\kappa}\right)$ such that $\alpha\mathfrak{C}\beta$ for some non-$\mathscr{H}$-equivalent elements $\alpha,\beta\in\mathcal{IP\!F}\left(\sigma{\mathbb{N}^\kappa}\right)$. Then $\varepsilon\mathfrak{C}\iota$ for all idempotents $\varepsilon,\iota$ of $\mathcal{IP\!F}\left(\sigma{\mathbb{N}^\kappa}\right)$.
      \end{lemma}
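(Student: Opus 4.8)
The plan is to reduce the statement to Lemma~\ref{lemma-2.20}: it suffices to exhibit \emph{two distinct} idempotents of $\mathcal{IP\!F}\left(\sigma{\mathbb{N}^\kappa}\right)$ that are $\mathfrak{C}$-related, and then invoke that lemma to conclude that $\mathfrak{C}$ collapses all of $E\left(\mathcal{IP\!F}\left(\sigma{\mathbb{N}^\kappa}\right)\right)$.

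First I would use the fact that a congruence on an inverse semigroup respects inversion. Since $\mathcal{IP\!F}\left(\sigma{\mathbb{N}^\kappa}\right)$ is inverse (Proposition~\ref{proposition-2.1}$\left(i\right)$), the quotient $\mathcal{IP\!F}\left(\sigma{\mathbb{N}^\kappa}\right)/\mathfrak{C}$ is inverse and the canonical projection $\pi$ is a homomorphism of inverse semigroups; applying $\pi$ to the identities $\alpha\alpha^{-1}\alpha=\alpha$, $\alpha^{-1}\alpha\alpha^{-1}=\alpha^{-1}$ and using uniqueness of inverses shows $\pi\left(\alpha^{-1}\right)=\pi\left(\alpha\right)^{-1}$, hence $\alpha\mathfrak{C}\beta$ forces $\alpha^{-1}\mathfrak{C}\beta^{-1}$. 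Multiplying the two relations $\alpha\mathfrak{C}\beta$ and $\alpha^{-1}\mathfrak{C}\beta^{-1}$ in the appropriate orders yields
\begin{equation*}
  \alpha\alpha^{-1}\;\mathfrak{C}\;\beta\beta^{-1} \qquad \text{and} \qquad \alpha^{-1}\alpha\;\mathfrak{C}\;\beta^{-1}\beta,
\end{equation*}
where all four products are idempotents.

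Next I would identify the defining points of these idempotents straight from the definition of composition of partial maps: $\operatorname{dom}\left(\alpha\alpha^{-1}\right)=\operatorname{dom}\alpha={\uparrow}d_\alpha$ and $\operatorname{dom}\left(\alpha^{-1}\alpha\right)=\operatorname{ran}\alpha={\uparrow}r_\alpha$, so $d_{\alpha\alpha^{-1}}=d_\alpha$ and $d_{\alpha^{-1}\alpha}=r_\alpha$, and likewise $d_{\beta\beta^{-1}}=d_\beta$, $d_{\beta^{-1}\beta}=r_\beta$. By Proposition~\ref{proposition-2.1}$\left(ii\right)$ the assignment $\varepsilon\mapsto d_\varepsilon$ is a bijection from $E\left(\mathcal{IP\!F}\left(\sigma{\mathbb{N}^\kappa}\right)\right)$ onto $\sigma{\mathbb{N}^\kappa}$; therefore $\alpha\alpha^{-1}=\beta\beta^{-1}$ iff $d_\alpha=d_\beta$, and $\alpha^{-1}\alpha=\beta^{-1}\beta$ iff $r_\alpha=r_\beta$.

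Finally, since $\alpha$ and $\beta$ are not $\mathscr{H}$-equivalent, Proposition~\ref{proposition-2.1}$\left(v\right)$ gives $\operatorname{dom}\alpha\neq\operatorname{dom}\beta$ or $\operatorname{ran}\alpha\neq\operatorname{ran}\beta$, i.e.\ $d_\alpha\neq d_\beta$ or $r_\alpha\neq r_\beta$. In the first case $\alpha\alpha^{-1}$ and $\beta\beta^{-1}$ are two distinct idempotents with $\alpha\alpha^{-1}\mathfrak{C}\beta\beta^{-1}$; in the second case $\alpha^{-1}\alpha$ and $\beta^{-1}\beta$ are two distinct idempotents with $\alpha^{-1}\alpha\mathfrak{C}\beta^{-1}\beta$. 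In either case $\mathfrak{C}$ relates two distinct idempotents of $\mathcal{IP\!F}\left(\sigma{\mathbb{N}^\kappa}\right)$, and Lemma~\ref{lemma-2.20} then yields $\varepsilon\mathfrak{C}\iota$ for all idempotents $\varepsilon,\iota$. I do not expect any genuine obstacle here: the only point requiring care is the passage $\alpha\mathfrak{C}\beta\Rightarrow\alpha^{-1}\mathfrak{C}\beta^{-1}$, and everything after that is bookkeeping with domains and ranges together with the two earlier results.
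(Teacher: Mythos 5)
Your proposal is correct and follows essentially the same route as the paper: the paper likewise passes from $\alpha\mathfrak{C}\beta$ to $\alpha\alpha^{-1}\mathfrak{C}\beta\beta^{-1}$ and $\alpha^{-1}\alpha\mathfrak{C}\beta^{-1}\beta$ (citing Lawson for the compatibility of congruences with inversion, which you prove directly), uses that non-$\mathscr{H}$-equivalence forces $\alpha\alpha^{-1}\neq\beta\beta^{-1}$ or $\alpha^{-1}\alpha\neq\beta^{-1}\beta$, and then invokes Lemma~\ref{lemma-2.20}. Your explicit bookkeeping with $d_\alpha$, $r_\alpha$ and Proposition~\ref{proposition-2.1} is just an unpacking of the references the paper cites.
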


      \begin{proof}[\textsl{Proof}]
      Since $\alpha$ and $\beta$ are not-$\mathscr{H}$-equivalent in $\mathcal{IP\!F}\left(\sigma{\mathbb{N}^\kappa}\right)$ we have that either $\alpha\alpha^{-1}\neq\beta\beta^{-1}$ or $\alpha^{-1}\alpha\neq\beta^{-1}\beta$ (see \cite[p.~82]{Lawson-1998}). Then Proposition~4 from \cite[Section~2.3]{Lawson-1998} implies that $\alpha\alpha^{-1}\mathfrak{C}\beta\beta^{-1}$ and $\alpha^{-1}\alpha\mathfrak{C}\beta^{-1}\beta$ and hence the assumption of Lemma~\ref{lemma-2.20} holds.
      \end{proof}

    \begin{lemma}\label{lemma-2.22}
      Let $\kappa$ be any infinite cardinal and let $\mathfrak{C}$ be a congruence on the semigroup $\mathcal{IP\!F}\left(\sigma{\mathbb{N}^\kappa}\right)$ such that $\alpha\mathfrak{C}\beta$ for some two distinct $\mathscr{H}$-equivalent elements $\alpha,\beta\in\mathcal{IP\!F}\left(\sigma{\mathbb{N}^\kappa}\right)$. Then $\varepsilon\mathfrak{C}\iota$ for all idempotents $\varepsilon,\iota$ of $\mathcal{IP\!F}\left(\sigma{\mathbb{N}^\kappa}\right)$.
    \end{lemma}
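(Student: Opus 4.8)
The plan is to use the unique factorization $\alpha=\rho_{\alpha}\mathcal{F}^\circ_{g_\alpha}\lambda_{\alpha}$ of Proposition~\ref{proposition-1.5} to push the relation $\alpha\mathfrak{C}\beta$ down to a relation of the form $\mathbb{I}\mathfrak{C}\mathcal{F}^\circ_g$ with $g\neq id_\kappa$, then extract from it two \emph{distinct} $\mathfrak{C}$-related idempotents, and finally invoke Lemma~\ref{lemma-2.20}.

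First I would observe that since $\alpha\mathscr{H}\beta$, Proposition~\ref{proposition-2.1}$\left(v\right)$ gives $\operatorname{dom}\alpha=\operatorname{dom}\beta$ and $\operatorname{ran}\alpha=\operatorname{ran}\beta$, i.e. $d_\alpha=d_\beta$ and $r_\alpha=r_\beta$; hence $\rho_\alpha=\rho_\beta$ and $\lambda_\alpha=\lambda_\beta$ straight from their definitions. By Corollary~\ref{corollary-1.6} the representations $\alpha=\rho_\alpha\mathcal{F}^\circ_{g_\alpha}\lambda_\alpha$ and $\beta=\rho_\alpha\mathcal{F}^\circ_{g_\beta}\lambda_\alpha$ are unique, so $\alpha\neq\beta$ forces $g_\alpha\neq g_\beta$. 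Multiplying $\alpha\mathfrak{C}\beta$ on the left by $\rho_\alpha^{-1}$ and on the right by $\lambda_\alpha^{-1}$, and using $\rho_\alpha^{-1}\rho_\alpha=\lambda_\alpha\lambda_\alpha^{-1}=\mathbb{I}$ (as in the proof of Proposition~\ref{proposition-1.5}) so that $\rho_\alpha^{-1}\alpha\lambda_\alpha^{-1}=\mathcal{F}^\circ_{g_\alpha}$ and $\rho_\alpha^{-1}\beta\lambda_\alpha^{-1}=\rho_\beta^{-1}\beta\lambda_\beta^{-1}=\mathcal{F}^\circ_{g_\beta}$, I obtain $\mathcal{F}^\circ_{g_\alpha}\mathfrak{C}\mathcal{F}^\circ_{g_\beta}$. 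Left-multiplying by $\left(\mathcal{F}^\circ_{g_\alpha}\right)^{-1}=\mathcal{F}^\circ_{g_\alpha^{-1}}$ and applying statement $\left(iv\right)$ of Lemma~\ref{lemma-1.2-3} (through Lemma~\ref{lemma-1.2-3-next}) gives $\mathbb{I}\mathfrak{C}\mathcal{F}^\circ_g$ with $g:=g_\alpha^{-1}g_\beta\neq id_\kappa$; left-multiplying this by $\mathcal{F}^\circ_{g^{-1}}$ also gives $\mathbb{I}\mathfrak{C}\mathcal{F}^\circ_{g^{-1}}$.

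Since $g\neq id_\kappa$, also $g^{-1}\neq id_\kappa$, so there is $x\in\kappa$ with $\left(x\right)g^{-1}\neq x$. Let $\varepsilon_x$ denote the idempotent of $\mathcal{IP\!F}\left(\sigma{\mathbb{N}^\kappa}\right)$ with $d_{\varepsilon_x}=2_x$, that is, the identity map of ${\uparrow}2_x$. Using statement $\left(v\right)$ of Lemma~\ref{lemma-1.2-3}, the domain of $\mathcal{F}^\circ_g\varepsilon_x\mathcal{F}^\circ_{g^{-1}}$ is ${\left({\uparrow}2_x\right)}\left(\mathcal{F}^\circ_g\right)^{-1}={\uparrow}\left(2_x\right)\mathcal{F}^\circ_{g^{-1}}={\uparrow}2_{\left(x\right)g^{-1}}$, on which this map acts identically, so $\mathcal{F}^\circ_g\varepsilon_x\mathcal{F}^\circ_{g^{-1}}=\varepsilon_{\left(x\right)g^{-1}}$. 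Then right-multiplying $\mathbb{I}\mathfrak{C}\mathcal{F}^\circ_g$ by $\varepsilon_x$ gives $\varepsilon_x\mathfrak{C}\mathcal{F}^\circ_g\varepsilon_x$, and left-multiplying $\mathbb{I}\mathfrak{C}\mathcal{F}^\circ_{g^{-1}}$ by $\mathcal{F}^\circ_g\varepsilon_x$ gives $\mathcal{F}^\circ_g\varepsilon_x\mathfrak{C}\mathcal{F}^\circ_g\varepsilon_x\mathcal{F}^\circ_{g^{-1}}=\varepsilon_{\left(x\right)g^{-1}}$; by transitivity $\varepsilon_x\mathfrak{C}\varepsilon_{\left(x\right)g^{-1}}$. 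As $\left(x\right)g^{-1}\neq x$ these are two distinct idempotents, so the hypothesis of Lemma~\ref{lemma-2.20} holds and hence $\varepsilon\mathfrak{C}\iota$ for all idempotents $\varepsilon,\iota$ of $\mathcal{IP\!F}\left(\sigma{\mathbb{N}^\kappa}\right)$.

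The calculations here are routine; the part that needs care is the reduction step — establishing $\rho_\alpha=\rho_\beta$, $\lambda_\alpha=\lambda_\beta$ and $g_\alpha\neq g_\beta$ from $\alpha\mathscr{H}\beta$ together with the uniqueness in Corollary~\ref{corollary-1.6} — and the conjugation identity $\mathcal{F}^\circ_g\varepsilon_x\mathcal{F}^\circ_{g^{-1}}=\varepsilon_{\left(x\right)g^{-1}}$, together with the elementary point that a nontrivial $g^{-1}$ must move some coordinate.
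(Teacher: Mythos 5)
Your proposal is correct, but it follows a genuinely different route from the paper. The paper's proof invokes simplicity (Proposition~\ref{proposition-2.1}$\left(vi\right)$) together with Theorem~2.3 of Clifford--Preston to translate the $\mathscr{H}$-class $H_\alpha$ onto the group of units by a map $\chi\mapsto\mu\chi\xi$, which yields $\mathbb{I}\mathfrak{C}\gamma$ for some unit $\gamma=\mathcal{F}^\circ_{g_\gamma}\neq\mathbb{I}$; it then shows that $\varepsilon$ and $\varepsilon\gamma\varepsilon$ (with $d_\varepsilon=2_x$, $\left(x\right)g_\gamma\neq x$) are $\mathfrak{C}$-related but non-$\mathscr{H}$-equivalent, and concludes via Lemma~\ref{lemma-2.21}. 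You instead make the translation explicit: since $\alpha\mathscr{H}\beta$ gives $\rho_\alpha=\rho_\beta$ and $\lambda_\alpha=\lambda_\beta$, cancelling these in the factorization $\alpha=\rho_\alpha\mathcal{F}^\circ_{g_\alpha}\lambda_\alpha$ (using $\rho_\alpha^{-1}\rho_\alpha=\lambda_\alpha\lambda_\alpha^{-1}=\mathbb{I}$) turns $\alpha\mathfrak{C}\beta$ into $\mathbb{I}\mathfrak{C}\mathcal{F}^\circ_g$ with $g=g_\alpha^{-1}g_\beta\neq id_\kappa$, after which the conjugation identity $\mathcal{F}^\circ_g\varepsilon_x\mathcal{F}^\circ_{g^{-1}}=\varepsilon_{\left(x\right)g^{-1}}$ produces two distinct $\mathfrak{C}$-related idempotents and you apply Lemma~\ref{lemma-2.20} directly, bypassing Lemma~\ref{lemma-2.21}. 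All your steps check out (in particular the reduction $g_\alpha\neq g_\beta$ needs only that equal $g$'s would give literally equal products, and your conjugation computation mirrors the one the paper itself performs inside the proof of Lemma~\ref{lemma-2.20}). What your approach buys is self-containedness: it avoids the external citation of Clifford--Preston Theorem~2.3 and exploits the concrete structure of $\mathcal{IP\!F}\left(\sigma{\mathbb{N}^\kappa}\right)$; what the paper's approach buys is brevity and the fact that it works verbatim in any bisimple inverse semigroup where the analogous unit-group argument is available.
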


        \begin{proof}[\textsl{Proof}]
        By Proposition~\ref{proposition-2.1}$\left(vi\right)$ the semigroup $\mathcal{IP\!F}\left(\sigma{\mathbb{N}^\kappa}\right)$ is simple and then Theorem~2.3 from \cite{Clifford-Preston-1961-1967} implies that there exist $\mu,\xi\in\mathcal{IP\!F}\left(\sigma{\mathbb{N}^\kappa}\right)$ such that $f\colon H_{\alpha}\to H_{\mathbb{I}}\colon \chi\mapsto\mu\chi\xi$ maps $\alpha$ to $\mathbb{I}$ and $\beta$ to $\gamma\neq\mathbb{I}$, respectively, which implies that $\mathbb{I}\mathfrak{C}\gamma$. Since $\gamma$ is an element of the group of units of the semigroup $\mathcal{IP\!F}\left(\sigma{\mathbb{N}^\kappa}\right)$, by Theorem~\ref{theorem-1.3}, $\gamma=\mathcal{F}^\circ_{g_\gamma}$ and since $\gamma\neq\mathbb{I}$ we have that $g_\gamma\neq id_\kappa$, so there exists $x\in\kappa$ such that $\left(x\right)g_\gamma\neq x$. Put $\varepsilon$ as the identity map with $d_\varepsilon=2_x$. Since $\mathfrak{C}$ is a congruence on the semigroup $\mathcal{IP\!F}\left(\sigma{\mathbb{N}^\kappa}\right)$ and $\gamma\in H_{\mathbb{I}}$ we have that
        \begin{equation*}
          \varepsilon=\varepsilon\varepsilon=\varepsilon\mathbb{I}\varepsilon\mathfrak{C}\varepsilon\gamma\varepsilon.
        \end{equation*}
        Proposition~\ref{proposition-1.11} implies that
        \begin{equation*}
        \left(\varepsilon\gamma\varepsilon\right)\Psi=\left(g_\gamma,\left[\operatorname{max}\{\left(2_x\right)\mathcal{F}^\circ_{g_\gamma}, 2_x\}, \operatorname{max}\{\left(2_x\right)\mathcal{F}^\circ_{g_\gamma}, 2_x\}\right]\right).
        \end{equation*}
        By Lemma~\ref{lemma-1.2-3}$\left(v\right)$ we have that $\left(2_x\right)\mathcal{F}^\circ_{g_\gamma}=2_{\left(x\right)g_\gamma}\neq 2_x$, this and the definition of elements $2_x$ and $2_{\left(x\right)g_\gamma}$ imply that $\operatorname{max}\{\left(2_x\right)\mathcal{F}^\circ_{g_\gamma}, 2_x\}\neq 2_x$, so
        \begin{equation*}
        r_{\varepsilon\gamma\varepsilon}=\operatorname{max}\{\left(2_x\right)\mathcal{F}^\circ_{g_\gamma}, 2_x\}\neq 2_x = r_{\varepsilon},
        \end{equation*}
        then by Proposition~\ref{proposition-2.1}$\left(v\right)$, $\varepsilon\gamma\varepsilon$ and $\varepsilon$ are non-$\mathscr{H}$-equivalent elements in $\mathcal{IP\!F}\left(\sigma{\mathbb{N}^\kappa}\right)$. Next, we apply Lemma~\ref{lemma-2.21}.
        \end{proof}

    \begin{theorem}\label{theorem-2.23}
      For any infinite cardinal $\kappa$ every non-identity congruence $\mathfrak{C}$ on the semigroup $\mathcal{IP\!F}\left(\sigma{\mathbb{N}^\kappa}\right)$ is group.
      \end{theorem}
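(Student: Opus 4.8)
The plan is to reduce the theorem to the three preparatory Lemmas~\ref{lemma-2.20}, \ref{lemma-2.21}, \ref{lemma-2.22} together with the standard characterization of group congruences on inverse semigroups. Recall that a congruence $\mathfrak{C}$ on an inverse semigroup $S$ is a group congruence (i.e., the quotient $S/\mathfrak{C}$ is a group) precisely when all idempotents of $S$ lie in a single $\mathfrak{C}$-class: indeed, $S/\mathfrak{C}$ is again an inverse semigroup whose idempotents are exactly the $\mathfrak{C}$-classes of idempotents of $S$, so collapsing $E(S)$ to one class makes $S/\mathfrak{C}$ an inverse semigroup with a unique idempotent, and an inverse semigroup with a single idempotent is a group. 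Hence it suffices to show that if $\mathfrak{C}$ is not the identity congruence, then $\varepsilon\mathfrak{C}\iota$ for all $\varepsilon,\iota\in E\left(\mathcal{IP\!F}\left(\sigma{\mathbb{N}^\kappa}\right)\right)$.

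First I would use that $\mathfrak{C}$ is non-identity to pick distinct elements $\alpha,\beta\in\mathcal{IP\!F}\left(\sigma{\mathbb{N}^\kappa}\right)$ with $\alpha\mathfrak{C}\beta$, and then split into the two exhaustive cases according to whether $\alpha$ and $\beta$ are $\mathscr{H}$-equivalent. If $\alpha$ and $\beta$ are not $\mathscr{H}$-equivalent, Lemma~\ref{lemma-2.21} immediately gives $\varepsilon\mathfrak{C}\iota$ for all idempotents. If $\alpha$ and $\beta$ are $\mathscr{H}$-equivalent (and still distinct), Lemma~\ref{lemma-2.22} applies and yields the same conclusion. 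Either way all idempotents of $\mathcal{IP\!F}\left(\sigma{\mathbb{N}^\kappa}\right)$ are $\mathfrak{C}$-related, and by the characterization recalled above $\mathcal{IP\!F}\left(\sigma{\mathbb{N}^\kappa}\right)/\mathfrak{C}$ is a group, so $\mathfrak{C}$ is a group congruence.

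As for the main obstacle: at the level of this theorem there is essentially none — it is only the short case analysis above. The genuinely hard work has already been carried out in Lemma~\ref{lemma-2.20}, where the collapse of one non-trivial pair of idempotents is propagated, by conjugating with the order automorphisms $\mathcal{F}^\circ_g$ and with powers of suitable "shift" isomorphisms, to a collapse of the entire semilattice $E\left(\mathcal{IP\!F}\left(\sigma{\mathbb{N}^\kappa}\right)\right)\cong\left(\sigma{\mathbb{N}^\kappa},\operatorname{max}\right)$; Lemmas~\ref{lemma-2.21} and \ref{lemma-2.22} then reduce a non-trivial identification of arbitrary elements to that situation, using that $\mathcal{IP\!F}\left(\sigma{\mathbb{N}^\kappa}\right)$ is simple (Proposition~\ref{proposition-2.1}$\left(vi\right)$), the description of $H\left(\mathbb{I}\right)$ in Theorem~\ref{theorem-1.3}, and Green's-relation bookkeeping. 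The only point requiring a word of care in the present proof is that, for a pair of distinct $\mathfrak{C}$-related elements, the dichotomy ``non-$\mathscr{H}$-equivalent'' versus ``distinct and $\mathscr{H}$-equivalent'' is genuinely exhaustive, which it plainly is.
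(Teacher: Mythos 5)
Your proof is correct and follows essentially the same route as the paper: split on whether the distinct $\mathfrak{C}$-related pair is $\mathscr{H}$-equivalent, invoke Lemma~\ref{lemma-2.21} resp.\ Lemma~\ref{lemma-2.22} to collapse all idempotents, and conclude that a quotient inverse semigroup with a unique idempotent is a group (the paper cites Lemma~II.1.10 of Petrich for this last step, which you instead argue directly). Incidentally, your case-to-lemma assignment is the correct one; the paper's own proof text cites the two lemmas in swapped order, an evident typo.
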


      \begin{proof}[\textsl{Proof}]
      For every non-identity congruence $\mathfrak{C}$ on $\mathcal{IP\!F}\left(\sigma{\mathbb{N}^\kappa}\right)$ there exist two distinct elements $\alpha,\beta\in\mathcal{IP\!F}\left(\sigma{\mathbb{N}^\kappa}\right)$ such that $\alpha\mathfrak{C}\beta$. If $\alpha\mathscr{H}\beta$ in $\mathcal{IP\!F}\left(\sigma{\mathbb{N}^\kappa}\right)$ then by Lemma~\ref{lemma-2.21} all idempotents of the semigroup $\mathcal{IP\!F}\left(\sigma{\mathbb{N}^\kappa}\right)$ are $\mathfrak{C}$-equivalent, otherwise by Lemma~\ref{lemma-2.22} we get the same. Thus, by Lemma~II.1.10 of \cite{Petrich-1984} the quotient semigroup $\mathcal{IP\!F}\left(\sigma{\mathbb{N}^\kappa}\right)/\mathfrak{C}$ has a unique idempotent and hence it is a group.
      \end{proof}

  \section*{\textbf{Acknowledgements}}

  The author acknowledges Oleg Gutik and Alex Ravsky for their useful comments and suggestions.


\begin{thebibliography}{W}

    \bibitem{Andersen-1952}
    O.~Andersen,
    \textit{Ein Bericht \"{u}ber die Struktur abstrakter Halbgruppen},
    PhD Thesis, Hamburg, 1952.
  
    \bibitem{Anderson-Hunter-Koch-1965}
    L.~W.~Anderson, R.~P.~Hunter, and R.~J.~Koch,
    \textit{Some results on stability in semigroups}.
    Trans. Amer. Math. Soc. {\bf 117} (1965), 521--529.
    DOI: 10.2307/1994222
  
    \bibitem{Banakh-Dimitrova-Gutik-2009}
    T.~Banakh, S.~Dimitrova, and O.~Gutik,
    \textit{The Rees-Suschkewitsch Theorem for simple to\-po\-lo\-gical semigroups},
    Mat. Stud. \textbf{31} (2009), no.~2, 211--218.
  
    \bibitem{Banakh-Dimitrova-Gutik-2010}
    T.~Banakh, S.~Dimitrova, and O.~Gutik,
    \textit{Embedding the bicyclic semigroup into countably compact topological semigroups},
    Topology Appl. \textbf{157} (2010), no.~18, 2803--2814. \\
    DOI: 10.1016/j.topol.2010.08.020
  
    \bibitem{Bardyla-Gutik-2016}
    S.~Bardyla and O.~Gutik,
    \textit{On a semitopological polycyclic monoid},
    Algebra Discr. Math. \textbf{21} (2016), no. 2, 163--183.
  
    \bibitem{Bardyla-Gutik-2020}
    S.~Bardyla and O.~Gutik,
    \textit{On the lattice of weak topologies on the bicyclic monoid with adjoined zero},
    Algebra Discr. Math. \textbf{30} (2020), no. 1, 26--43.
    DOI: 10.12958/adm1459
  
    \bibitem{Bertman-West-1976}
    M.~O.~Bertman and T.~T.~West,
    \textit{\it Conditionally compact bicyclic semitopological semigroups},
    Proc. Roy. Irish Acad. Sec. A {\bf76} (1976), no.~21--23, 219--226.
  
    \bibitem{Chuchman-Gutik-2010}
    I.~Chuchman and O.~Gutik,
    \textit{Topological monoids of almost monotone injective co-finite partial selfmaps of the set of positive integers},
    Carpathian Math. Publ. 2 (2010), no. 1, 119--132.
  
    \bibitem{Clifford-Preston-1961-1967}
    A.~H.~Clifford and G.~B.~Preston,
    \textit{The Algebraic Theory of Semigroups}, Vols. I and II,
    Amer. Math. Soc. Surveys {\bf 7}, Pro\-vidence, R.I., 1961 and 1967.
  
    \bibitem{Gutik-Lysetska-2023}
    O.~Gutik and O.~Lysetska,
    \textit{On the semigroup $\boldsymbol{B}_\omega^{\mathscr{F}}$ which is generated by the family $\mathscr{F}$ of atomic subsets of $\omega$},
    Visn. L'viv. Univ., Ser. Mekh.-Mat. 92 (2021), 34--50. \\
    DOI: 10.30970/vmm.2021.92.034-050
  
    \bibitem{Gutik-Khylynskyi-2022}
    O.~Gutik and P.~Khylynskyi,
    \textit{On a locally compact monoid of cofinite partial isometries of $\mathbb{N}$ with adjoined zero},
    Topol. Algebra Appl. \textbf{10} (2022), no. 1, 233--245. \\
    DOI: 10.1515/taa-2022-0130
  
    \bibitem{Gutik-Krokhmalna-2019}
    O.~Gutik and O.~Krokhmalna,
    \textit{The monoid of monotone injective partial selfmaps of the poset $(\mathbb{N}^{3},\leqslant)$ with cofinite domains and images},
    Visn. L'viv. Univ., Ser. Mekh.-Mat. \textbf{88} (2019), 32--50.
  
    \bibitem{Gutik-Maksymyk-2016}
    O.~Gutik and K.~Maksymyk,
    \textit{On semitopological bicyclic extensions of linearly ordered groups},
    Mat. Metody Fiz.-Mekh. Polya \textbf{59} (2016), no. 4, 31--43; \textbf{reprinted version}: J. Math. Sci. \textbf{238} (2019), no. 1, 32--45.
    DOI: 10.1007/s10958-019-04216-x
  
    \bibitem{Gutik-Maksymyk-2016-2}
    O.~Gutik and K.~Maksymyk,
    \textit{On semitopological interassociates of the bicyclic monoid},
    Visn. L'viv. Univ., Ser. Mekh.-Mat. \textbf{82} (2016), 98--108.
  
    \bibitem{Gutik-Mokrytskyi-2020}
    O.~Gutik and T.~Mokrytskyi,
    \textit{The monoid of order isomorphisms between principal filters of ${\mathbb{N}}^n$},
    Eur. J. Math. \textbf{6} (2020), no. 1, 14--36.
    DOI: 10.1007/s10958-019-04216-x
  
    \bibitem{Gutik-Mykhalenych-2020}
    O.~Gutik and M.~Mykhalenych,
    \textit{On some generalization of the bicyclic monoid},
    Visn. L'viv. Univ., Ser. Mekh.-Mat. \textbf{90} (2020), 5--19 (in Ukrainian). \\
    DOI: 10.30970/vmm.2020.90.005-019
  
    \bibitem{Gutik-Popadiuk-2022}
    O.~V. Gutik and O.~B. Popadiuk,
    \textit{On the semigroup of injective endomorphisms of the semigroup $B_\omega^{\mathscr{F}_n}$ which is generated by the family $\mathscr{F}_n$ of finite bounded intervals of $\omega$},
    Mat. Metody Fiz.-Mekh. Polya \textbf{65} (2022), no. 1--2, 42--57.
  
    \bibitem{Gutik-Pozdniakova-2022}
    O.~Gutik and I.~Pozdniakova,
    \textit{On the group of automorphisms of the semigroup $B_{\mathbb{Z}}^{\mathscr{F}}$ with the family $\mathscr{F}$ of inductive nonempty subsets of $\omega$}, Algebra Discr. Math. (to appear) (arXiv:2206.12819).
  
    \bibitem{Gutik-Repovs-2007}
    O.~Gutik and D.~Repov\v{s},
    \textit{On countably compact $0$-simple topological inverse semigroups},
    Semigroup Forum \textbf{75} (2007), no.~2, 464--469.
    DOI: 10.1007/s00233-007-0706-x
  
    \bibitem{Gutik-Savchuk-2018}
    O.~Gutik and A.~Savchuk,
    \textit{The semigroup of partial co-finite isometries of positive integers},
    Bukovyn. Mat. Zh. \textbf{6} (2018), no.1--2, 42--51 (in Ukrainian).
    DOI: 10.31861/bmj2018.01.042
  
    \bibitem{Gutik-Savchuk-2019}
    O.~Gutik and A.~Savchuk,
    \textit{On the monoid of cofinite partial isometries of $\mathbb{N}$ with the usual metric},
    Visn. L'viv. Univ., Ser. Mekh.-Mat. \textbf{89} (2020) 17--30. \\
    DOI: 10.30970/vmm.2020.89.017-030
  
    \bibitem{Hildebrant-Koch-1986}
    J.~A.~Hildebrant and R.~J.~Koch,
    \textit{\it Swelling actions of $\Gamma$-compact semigroups},
    Se\-mi\-group Fo\-rum {\bf 33} (1986), no.~1,  65--85.
    DOI: 10.1007/BF02573183
  
    \bibitem{Lawson-1998}
    M.~Lawson,
    \textit{Inverse semigroups. The theory of partial symmetries},
    World Scientific, Sin\-gapore, 1998.
  
    \bibitem{McFadden-Carroll-1971}
    R. McFadden and L. O'Carroll,
    \textit{$F$-inverse semigroups},
    Proc. Lond. Math. Soc., III Ser. \textbf{22} (1971), no.~4, 652--666.
    DOI:  10.1112/plms/s3-22.4.652
  
    \bibitem{Mokrytskyi-2019}
    T.~Mokrytskyi,
    \textit{On the dichotomy of a locally compact semitopological monoid of order isomorphisms between principal filters of ${\mathbb{N}}^n$ with adjoined zero},
    Visn. Lviv Univ., Ser. Mekh.-Mat. \textbf{87} (2019), 37--45.
    DOI:  10.30970/vmm.2019.87.037-045
  
    \bibitem{Munn-1966}
    W.~D.~Munn,
    \textit{Uniform semilattices and bisimple inverse semigroups},
    Q. J. Math., Oxf. II. Ser. \textbf{17} (1966), no.~1, 151--159.
    DOI: 10.1093/qmath/17.1.151
  
    \bibitem{Petrich-1984}
    M.~Petrich,
    \textit{Inverse semigroups},
    John Wiley $\&$ Sons, New York, 1984.
  
    \bibitem{Saito-1965}
    T. Sait\^{o},
    \textit{Proper ordered inverse semigroups},
    Pacif. J. Math. \textbf{15}  (1965), no.~2, 649--666. \\
    DOI: 10.2140/pjm.1965.15.649
  
    \bibitem{Vagner-1952}
    V.~V.~Wagner,
    \textit{Generalized groups},
    Dokl. Akad. Nauk SSSR \textbf{84} (1952), 1119--1122 (in Russian).
  
  \end{thebibliography}
  \end{document}